\newcommand{\Integ}{\ensuremath{\mathbb{Z}}}
\newcommand{\Nat}{\ensuremath{\mathbb{N}}}
\newcommand{\Comp}{\ensuremath{\mathbb{C}}}
\newcommand{\lb}{\langle}
\newcommand{\rb}{\rangle}
\newcommand{\setspec}[2]{\big\{\,#1\, \mid \,#2\, \big\}}
\newcommand{\isom}{\cong}
\newcommand{\CPone}{\mathbb{C}\mathbb{P}^1}
\newtheorem{theorem}[subsubsection]{Theorem}
\newtheorem*{theorem*}{Theorem}
\newtheorem{proposition}[subsubsection]{Proposition}
\newtheorem*{proposition*}{Proposition}
\newtheorem{lemma}[subsubsection]{Lemma}
\newtheorem*{lemma*}{Lemma}
\newtheorem{corollary}[subsubsection]{Corollary}
\theoremstyle{definition}
\newtheorem{remark}[subsubsection]{Remark}
\newtheorem{definition}[subsubsection]{Definition}
\newtheorem{nothing}[subsubsection]{}
\newtheorem{nothing*}[subsubsection]{}
\newtheorem{example}[subsubsection]{Example}
\newtheorem{notation}[subsubsection]{Notation}
\newtheorem*{mainquestion}{Main Question}
\newcommand{\fgoth}{{\ensuremath{\mathfrak{f}}}}
\newcommand{\Ceul}{\EuScript{C}}
\newcommand{\Feul}{\EuScript{F}}
\newcommand{\Neul}{\EuScript{N}}
\newcommand{\Peul}{\EuScript{P}}
\newcommand{\Stab}{\operatorname{{\rm Stab}}}
\newcommand{\length}{\operatorname{{\rm length}}}
\newcommand{\Xop}{X^{\rm op}}
\title{The Fundamental Group of a Compact Riemann Surface via Branched Covers}
\author{Meirav Amram, Michael Chitayat, Yaacov Kopeliovich}
\begin{document}

	\begin{abstract}
		Let $X$ be a compact Riemann surface of genus $g$ and let $x \in X$. We derive the classical presentation of $\pi_1(X,x)$ (i.e the one given by $2g$ generators $a_1,b_1, \dots, a_g,b_g$ and the relation $\prod_{i=1}^g[a_i,b_i] = 1$)
		from the description of $X$ as a branched cover $f : X \to \CPone$.
	\end{abstract}
	\maketitle
	\section*{Introduction}
	
	It is a classical result that if $X$ is a compact Riemann surface of genus $g$ and $x \in X$, then 
	\begin{equation}\label{classical}
		\pi_1(X,x) \isom \big\lb a_1, b_1, \dots, a_g, b_g \ | \ \prod_{i=1}^g [ a_i, b_i] = 1 \big\rb.
	\end{equation}
	It is also well-known that every compact Riemann surface $X$ can be described as a branched cover $f : X \to \CPone$. With this in mind, one expects a straightforward process for obtaining the commutator description of $\pi_1(X,x)$ from the branched cover description $f$. Surprisingly, we have found no such description in the literature. As such, we give an explicit algebraic description of this relationship.   
	
	Recall that $f$ restricts to an unramified cover $f: \Xop \to \CPone \setminus B$ where $B$ is the set of branch points and $\Xop = X \setminus f^{-1}(B)$. Let $z \in \CPone \setminus B$ and let $z_1 \in f^{-1}(z)$. Using results of Schreier, one can describe $\pi_1(\Xop,z_1)$ explicitly as a subgroup of the free group $\pi_1(\CPone \setminus B,z)$. Note that $\pi_1(\Xop,z_1)$ is also a free group. Then, using a theorem of Van Kampen, we can obtain an explicit description of $\pi_1(X,x)$ as a quotient of $\pi_1(\Xop,z_1)$ by a normal subgroup $\Neul$, whose generators can be described by explicit elements of $\pi_1(\Xop,z_1) \leq \pi_1(\CPone \setminus B,z)$. Define $G_0 = \pi_1(\Xop,z_1) / \Neul$ so that $G_0 \isom \pi_1(X,x)$. The natural question that can be asked given these two descriptions of $\pi_1(X,x)$ is the following one:
	
	\begin{mainquestion}\label{sequenceQuestion}
		Can one describe an explicit sequence of isomorphisms from $G_0$ to the classical description of $\pi_1(X,x)$ described in \eqref{classical}?
	\end{mainquestion}
	
	We demonstrate in this article an affirmative answer to the above question, providing an algorithm required to produce a sequence of isomorphisms 
	$$\pi_1(\Xop,z_1) / \Neul = G_0 \to G_1 \to \dots \to G_m = \big\lb a_1, b_1, \dots, a_g, b_g \ | \ \prod_{i=1}^g [ a_i, b_i] = 1 \big\rb$$
	and showing formally that our algorithm always works.  
	
	In Section \ref{preliminaries} we recall basic results on covering spaces, coset representations, Schreier transversals and the Schreier rewriting process. Most of these preliminary results can be found in \cite{ezell1978branch},\cite{fried2005field} and \cite{Hatcher}. Section \ref{fundamentalWords} contains purely group theoretic results. We introduce and discuss the notions of \textit{prefundamental} and \textit{fundamental} words in free groups and present the necessary results used by our algorithm. Section \ref{mainResults} contains our main results. We describe our algorithm in detail and provide the necessary proofs to ensure our algorithm always produces the desired sequence of isomorphisms from $G_0$ to $G_m$. Finally, Section \ref{examples} applies our algorithm to two different examples. We first apply our algorithm to a single case of a branched cover that is not fully branched (i.e. the covering is not fully ramified at any point) demonstrating the relative simplicity of our algorithm when applied to specific examples. We then apply our results to a general member of the family of hyperelliptic curves.
	
	One motivation comes from the work of Michael Fried. A recurring theme of Fried's work is that it is often sufficient (and fruitful) to consider branched covers as a means of exploring deep questions about Riemann surfaces and Hurwitz spaces, namely, moduli spaces of branched covers of the projective line. This topic overlaps very closely with the algebraic topic of understanding field extensions of $\Comp(z)$, the function field in one variable. These ideas are discussed in  \cite{zariski1978topology} and are also addressed in \cite{covers1989combinatorial} where Fried discusses many of Zariski's contributions to the topic and outlines many of his own. Additionally, algorithms to find generators of $H_1(X)$ are known to exist (see \cite{tretkoff1984combinatorial} for example) but we could not find any results dealing with the Main Question. We are hopeful that demonstrating this connection via explicit group isomorphisms (and eventually coding it) will provide researchers interested in these topics with an additional tool to explore questions about Nielsen classes, Hurwitz spaces as well as the related theory of field extensions of $\Comp(z)$. 
	
	We made every effort to include the references required to ensure that all our results as well as the description of our algorithm can be understood by a graduate student with a basic knowledge of group theory and algebraic topology. We would also encourage the reader to explore additional families of examples to those presented in Section \ref{examples}.   
	
	\medskip
	\textbf{Acknowledgements.} The first author would like to thank Sami Shamoon College of Engineering for generously funding this research. The second author would also like to thank Sami Shamoon College of Engineering for the invitation to work as a postdoctoral researcher from March to July 2024. Much of this work was completed during that time. He is also grateful to the University of Connecticut for funding a short research visit in November 2024 and to Daniel Daigle for many valuable suggestions and improvements. Part of the research was conducted by the second and third authors during the NATO Science for Peace and Security conference in Jerusalem in July 2024 and both authors are grateful to the conference organizers for the invitation, hospitality, and funding. The third author also would like to thank M. Fried for the introduction to this subject and various discussions that occurred thereafter.  
	
	\section{Notation and Preliminaries} \label{preliminaries}
	
	\subsection{Notation}
	\begin{itemize}
		\item We use the symbols $\Nat$, $\Nat^+$, and $\Integ$ to respectively denote the set of natural numbers, positive integers, and integers.  
		
		\item Let $G$ be a group. We write $H \leq G$ when $H$ is
		a subgroup of $G$ and $H \lhd G$ when $H$ is a normal subgroup of $G$. We let $[G:H]$ denote the index of $H$ in $G$. 
		\item Let $G$ be a group. A \textit{permutation representation of $G$} is a group homomorphism $\tau : G \to S_n$ where $S_n$ is the group of permutations of $n$ elements.
		\item A subgroup $H \leq S_n$ is \textit{transitive} if for every $i,j \in \{1, \dots, n\}$, there exists some $\sigma \in H$ such that $\sigma(i) = j$. 
		\item Given elements $x,y \in G$, we define $[x,y] = x^{-1}y^{-1}xy$.   
		
		\item Given two groups $G$ and $H$, we let $G \star H$ denote the free product of $G$ and $H$. 
		
		\item If $S$ is any set, we let $F(S)$ denote the free group with letters in $S$. 
		
		\item A Riemann surface is a connected one-dimensional complex manifold. 
		
		\item We use square brackets to denote a multiset and braces to denote a set. For example, the multiset $[1,1,1,3,3]$ has $\{1,3\}$ as its underlying set.

	\end{itemize}
	\subsection{Covering Spaces and Branched Covers}
	\medskip
	We collect some facts about covering spaces and about branched covers of connected surfaces. 
	\begin{nothing}[\cite{Hatcher}, Section 1.3] A \textit{covering space} of a topological space $X$ is a topological space $E$ together
		with a map $p : E \to X$ satisfying the condition that each point $x \in X$ has an
		open neighborhood $U$ in $X$ such that $p^{-1}(U)$ is a union of disjoint open sets in $E$,
		each of which is mapped homeomorphically onto $U$ by $p$. If $X$ is connected, then $|p^{-1}(x)|$ is constant and is called the \textit{degree of the covering.} 
		
	\end{nothing}
	
	\begin{proposition}\cite[Propositions 1.31, 1.32]{Hatcher}\label{subgroup}
		Let $p :E \to Z$ be a covering space, let $z \in Z$ and let $\tilde{z} \in p^{-1}(z)$. The induced map $p_* : \pi_1(E, \tilde{z}) \to \pi_1(Z, z)$ is injective and the image subgroup $p_*(\pi_1(E, \tilde{z}))$ in $\pi_1(Z, z)$ consists of the homotopy classes of loops in $Z$ based at $z$ whose lifts to $E$ starting at $\tilde{z}$ are loops. Moreover, if both $E$ and $Z$ are path connected, then the index of $p_*(\pi_1(E, \tilde{z}))$ in $\pi_1(Z, z)$ is the degree of the covering $p :E \to Z$. 
	\end{proposition}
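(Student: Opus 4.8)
The plan is to deduce all three assertions from two standard facts about a covering map $p : E \to Z$: the \emph{path lifting property} (every path in $Z$ together with a choice of point in the fiber over its initial point lifts uniquely to a path in $E$) and the \emph{homotopy lifting property} (every continuous map $H : [0,1]^2 \to Z$ lifts uniquely once the lift has been prescribed along, say, $[0,1]\times\{0\}$). Both follow from the definition of a covering space by the usual compactness argument: using a Lebesgue number for the open cover of $[0,1]$ (resp.\ $[0,1]^2$) by preimages of evenly covered neighbourhoods, subdivide the interval (resp.\ square) so finely that each small piece maps into an evenly covered $U\subseteq Z$, and build the lift piece by piece through the local homeomorphisms $p^{-1}(U)\to U$; uniqueness holds because two lifts of the same map agreeing at one point agree on a set that is both open and closed. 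Once these are in hand, everything else is formal.

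For injectivity of $p_*$, suppose $\gamma$ is a loop in $E$ based at $\tilde z$ with $p\circ\gamma$ null-homotopic rel endpoints in $Z$, witnessed by $H:[0,1]^2\to Z$ with $H(s,0)=p\circ\gamma(s)$, $H(s,1)=z$, and $H(0,t)=H(1,t)=z$. Lift $H$ to $\tilde H:[0,1]^2\to E$ with $\tilde H(\cdot,0)=\gamma$. Then $\tilde H(0,\cdot)$ and $\tilde H(1,\cdot)$ are lifts of the constant path at $z$ starting at $\tilde z$, hence constant at $\tilde z$ by uniqueness; consequently $\tilde H(\cdot,1)$ is a lift of the constant path at $z$ starting at $\tilde z$, hence also constant. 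Thus $\tilde H$ is a based null-homotopy of $\gamma$, so $[\gamma]=1$ in $\pi_1(E,\tilde z)$, and $p_*$ is injective. The same bookkeeping proves the image characterization: given a loop $\gamma$ in $Z$ at $z$, let $\tilde\gamma$ be its lift starting at $\tilde z$. If $[\gamma]=p_*[\delta]$ for some loop $\delta$ at $\tilde z$, choose a homotopy rel endpoints from $p\circ\delta$ to $\gamma$ and lift it starting from $\delta$; uniqueness of lifts forces the terminal edge to be $\tilde\gamma$, and since the lifted homotopy keeps the right-hand edge in the fiber and $\delta(1)=\tilde z$, we get $\tilde\gamma(1)=\tilde z$, i.e.\ $\tilde\gamma$ is a loop. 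Conversely, if $\tilde\gamma$ is a loop then trivially $[\gamma]=p_*[\tilde\gamma]$.

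For the index statement, assume $E$ and $Z$ are path connected (so $Z$ is connected and the degree equals the common cardinality $|p^{-1}(z)|$). Define the monodromy action of $\pi_1(Z,z)$ on the fiber $p^{-1}(z)$ by $w\cdot[\gamma]=\tilde\gamma_w(1)$, where $\tilde\gamma_w$ is the lift of $\gamma$ starting at $w$; homotopy lifting makes this well defined on homotopy classes, and concatenation of lifts makes it a right action. This action is transitive on $p^{-1}(z)$: for any $w\in p^{-1}(z)$, path connectedness of $E$ supplies a path from $\tilde z$ to $w$, whose image under $p$ is a loop $\gamma$ at $z$ with $\tilde z\cdot[\gamma]=w$. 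By the characterization just proved, the stabilizer of $\tilde z$ is exactly $p_*(\pi_1(E,\tilde z))$. Orbit–stabilizer then yields a bijection between the set of right cosets $p_*(\pi_1(E,\tilde z))\backslash\,\pi_1(Z,z)$ and $p^{-1}(z)$, so $[\pi_1(Z,z):p_*(\pi_1(E,\tilde z))]=|p^{-1}(z)|=\deg p$.

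The only step requiring genuine care — \textbf{the main obstacle} — is the homotopy lifting property together with the basepoint bookkeeping: one must verify that when $H$ fixes endpoints, the lifted homotopy $\tilde H$ does too, which is precisely where uniqueness of lifts of constant paths is invoked, and similarly that the several lifts constructed in the image argument coincide. The compactness/Lebesgue-number construction of lifts and the orbit–stabilizer argument are then routine.
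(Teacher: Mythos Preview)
Your argument is correct and is essentially the standard proof from Hatcher (Propositions~1.31 and~1.32): injectivity and the image description via homotopy lifting with the endpoint bookkeeping, and the index statement via the monodromy action on the fiber together with orbit--stabilizer. The paper itself does not supply a proof of this proposition at all; it merely quotes the statement and cites Hatcher, so there is nothing further to compare.
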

	
	\begin{nothing}\label{permConstruction}
		Suppose $Z$ is connected, let $p: E \to Z$ be a covering space of degree $n$, let $z \in Z$ and choose a labelling of the points $\{z_1, \dots, z_n\} = p^{-1}(z)$. If $i \in \{1,\dots,n\}$ then each  $\gamma \in \pi_1(Z,z)$ has a unique lift $\widetilde{\gamma}^i$ in $E$  that starts at $z_i$ (and ends at some point of $\{z_1,\dots,z_n\}$). It can be checked that the map $p$ induces a well-defined homomorphism $\rho: \pi_1(Z,z) \to S_n$ where for each $\gamma \in \pi_1(Z,z)$, $\rho(\gamma)$ is the permutation $\sigma_\gamma \in S_n$ defined by
		\begin{equation}\label{permutationRepresentationConstructionEquation}
			\sigma_\gamma(i) = j \text{ if $\widetilde{\gamma}^i$ ends at $z_j$}. 
		\end{equation}
		We call $\rho : \pi_1(Z,z) \to S_n$ the \textit{permutation representation associated to the covering $p:E \to Z$}, noting that the permutation representation depends on the labeling of the points in $p^{-1}(z)$. 
	\end{nothing}
	
	\begin{corollary}\label{stabilizerInverseImage}
		Let $p : E \to Z$ be a covering space, let $p^{-1}(z) = \{z_1, \dots, z_n\}$. Let $\rho : \pi_1(Z, z) \to S_n$ be the associated permutation representation.  Then for each $i = 1, \dots, n$, $\pi_1(E, z_i) \isom \rho^{-1} (\Stab(i))$. 
	\end{corollary}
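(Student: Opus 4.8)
The plan is to reinterpret the asserted abstract isomorphism as an equality of subgroups of $\pi_1(Z,z)$ combined with the injectivity of $p_*$. Concretely, I would prove that
$$p_*\big(\pi_1(E, z_i)\big) = \rho^{-1}(\Stab(i))$$
as subgroups of $\pi_1(Z,z)$. Since the first part of Proposition \ref{subgroup} guarantees that $p_* : \pi_1(E, z_i) \to \pi_1(Z,z)$ is injective, it restricts to an isomorphism onto its image, and the claimed isomorphism $\pi_1(E, z_i) \isom \rho^{-1}(\Stab(i))$ follows at once. This is really just the covering-space incarnation of the orbit–stabilizer correspondence.

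To establish the displayed equality I would unwind the two descriptions of the subgroup in question. On one hand, the second part of Proposition \ref{subgroup} identifies $p_*(\pi_1(E, z_i))$ with the set of homotopy classes $[\gamma] \in \pi_1(Z,z)$ for which the lift $\widetilde{\gamma}^i$ of $\gamma$ starting at $z_i$ is again a loop, i.e.\ ends at $z_i$. On the other hand, by the construction in \ref{permConstruction}, $\rho([\gamma]) = \sigma_\gamma$ is by definition the permutation with $\sigma_\gamma(i) = j$ exactly when $\widetilde{\gamma}^i$ ends at $z_j$. Hence $[\gamma] \in \rho^{-1}(\Stab(i))$ iff $\sigma_\gamma(i) = i$ iff $\widetilde{\gamma}^i$ ends at $z_i$, which is precisely the membership condition for $p_*(\pi_1(E,z_i))$. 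Both inclusions thus hold simultaneously.

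The argument is in essence a bookkeeping exercise matching the two definitions, so I do not expect a serious obstacle; the only points needing care are that $\rho$ is genuinely a well-defined homomorphism — but this is already asserted in \ref{permConstruction} and may simply be invoked — and that the lift $\widetilde{\gamma}^i$ depends only on the homotopy class of $\gamma$ rel endpoints, which is the homotopy lifting property and is exactly what makes the condition ``$\widetilde{\gamma}^i$ is a loop'' well-defined on $\pi_1(Z,z)$. I would also remark that no path-connectedness hypothesis on $E$ is required here: that assumption enters only in the index statement of Proposition \ref{subgroup}, which this corollary does not use.
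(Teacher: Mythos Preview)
Your proof is correct and follows essentially the same route as the paper: both use Proposition \ref{subgroup} to identify $\pi_1(E,z_i)$ with the subgroup $H_i = p_*(\pi_1(E,z_i)) \subseteq \pi_1(Z,z)$ of classes whose lift from $z_i$ is a loop, and then unwind the definition of $\rho$ from \ref{permConstruction} to see that $H_i = \rho^{-1}(\Stab(i))$. Your additional remarks on well-definedness and the irrelevance of path-connectedness are accurate but not needed for the argument.
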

	\begin{proof}
		By Proposition \ref{subgroup}, for each $i \in \{1, \dots, n\}$, we have $\pi_1(E,z_i) \isom H_i \leq \pi_1(Z,z)$ where 
		$$ H_i  = \setspec{ \gamma \in \pi_1(Z,z)}{\text{$\widetilde{\gamma}^i$ ends at $z_i$}}.$$
		For each $\gamma \in \pi_1(Z,z)$, we have $\gamma \in H_i$ $\Leftrightarrow$ $\widetilde{\gamma}^i$ ends at $z_i$ $\Leftrightarrow$ $\rho(\gamma)$ sends $i$ to itself $\Leftrightarrow$ $\rho(\gamma) \in \Stab(i)$. 
	\end{proof}

	\begin{nothing}[Section 2 of \cite{ezell1978branch}.]
		Let $f: M \to N$ be a continuous map between 2-dimensional real manifolds. We say that {\it $f$ is topologically equivalent to $z^n$} at a point $x \in M$ if there are open neighborhoods $W$ of $x$ in $M$ and $U$ of $f(x)$ in $N$ and a commutative diagram
		$$
		\xymatrix{ W \ar[d]_-{\alpha} \ar[r]^-{f|_W} & U \ar[d]^-{\beta} \\ \Comp \ar[r]_{z \mapsto z^n} & \Comp }
		$$
		such that $\alpha$ and $\beta$ are homeomorphisms and $\alpha(x) = 0$. An open set $U \subset N$ is \textit{evenly covered} if $f^{-1}(U)$ is a union of disjoint open sets $\{W_i\}_{i \in I}$, on each of which $f$ is topologically equivalent to the map $\Comp \to \Comp$,  $z \mapsto z^n$, for some $n \in\Nat^+$. Note that the value of $n$ depends on the set $W_i$ in $f^{-1}(U) = \bigsqcup_{i} W_i$. If every point of $N$ has an evenly covered
		neighborhood, then  $f$ is called a \textit{branched cover of $N$}. If $f$ is equivalent to $z^n$ at $x$, then the local degree of $f$ at $x$ is $n$. If the local degree of $f$ at $x \in M$ is at least $2$, we call $x$ a \textit{ramification point of $f$} (Ezell uses the term \textit{critical point}). If $b \in N$ and $f^{-1}(b)$ contains a ramification point of $f$, then $b$ is called a \textit{branch point} of $f$. Let $B \subset N$ denote the set of branch points of $f$. Let $\bar{N} = N \setminus B$ and let $\bar{M} = M \setminus f^{-1}(B)$. Then, for all points $y$ in  $\bar{N}$, the cardinality of $f^{-1}(y)$ is the same. Moreover, $f|_{\bar{M}} :\bar{M} \to \bar{N}$ is a covering space, and we define the \textit{degree of $f$} to be the degree of that covering space.
		
		Two branched covers $f_i : M_i \to N$ (where $i \in \{1,2\}$) are called \textit{equivalent} if there exists a homeomorphism $h : M_1 \to M_2$ such that $f_1 = f_2 \circ h$.
		
		Let $f: M \to N$ be a degree $n$ branched cover between compact, closed surfaces where $N$ is connected ($M$ need not be connected) and note that if $N$ is orientable then so is $M$ (\cite{ezell1978branch}, p.128). Let $\pi_1(\bar{N}, x)$ denote the fundamental group of $\bar{N}$ based at $x \in \bar{N}$ and let $x_1, x_2, \dots, x_n$ denote the $n$ preimages of $x$ under $f$. By \ref{permConstruction}, the restriction of $f$ induces a well-defined homomorphism $\rho: \pi_1(\bar{N},x) \to S_n$.
		Note that the homomorphism $\rho$ as defined above depends on the labelling of the points in $f^{-1}(x)$. Two homomorphisms $\rho$ and $\delta$ are \textit{equivalent} if there exists a permutation $\tau \in S_n$ such that for every $\gamma \in \pi_1(\bar{N},x)$, $\sigma_\gamma = \tau^{-1} \delta_\gamma \tau$. 
	\end{nothing}
	\begin{theorem}[\cite{ezell1978branch}, part of Theorem 2.1]\label{EzellMainTheorem}
		Let $N$ be a connected compact closed surface, $B$ a finite subset of $N$, $\bar{N} = N \setminus B$, $x \in \bar{N}$ and $n \in \Nat^+$.	Let $\Feul_{N,B}$ denote the set of equivalence classes of branched covers of $N$ of degree $n$ that are branched at most at $B$ and let $\Peul_n$ denote the set of equivalence classes of homomorphisms $\rho: \pi_1(\bar{N}, x) \to S_n$. Then, there is a well-defined bijection $\Gamma: \Feul_{N,B} \to \Peul_n$ with the following property:
		\begin{quote}
			If $\Gamma(F) = H$, $f:M \to N$ is a representative of $F$, and $\rho$ is a representative of $H$, then $M$ is connected if and only if $\rho(\pi_1(\bar{N},x))$ is a transitive subgroup of $S_n$. 
		\end{quote}
	\end{theorem}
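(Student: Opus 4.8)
The plan is to construct $\Gamma$ directly from the monodromy construction of \ref{permConstruction} and then to prove it is a bijection by reducing every assertion to the classical classification of (unbranched) covering spaces of $\bar N$ together with a purely local analysis over the finitely many points of $B$. Given a branched cover $f : M \to N$ of degree $n$ branched at most at $B$, restriction yields a genuine degree-$n$ covering $f|_{\bar M} : \bar M \to \bar N$, and a choice of labelling of $f^{-1}(x)$ produces via \ref{permConstruction} a homomorphism $\rho : \pi_1(\bar N, x) \to S_n$. I would first check that a change of labelling replaces $\rho$ by a conjugate in $S_n$, and that an equivalence $h$ of branched covers restricts to an isomorphism $\bar M_1 \to \bar M_2$ of covering spaces over $\bar N$ carrying one labelled fibre to the other; these two observations show that $\Gamma(F) := [\rho] \in \Peul_n$ is well defined.

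The technical heart of the argument is a local model that I would isolate as a lemma: writing $D$ for the open unit disk and $D^\ast = D \setminus \{0\}$, every covering space of $D^\ast$ of finite degree is isomorphic to a disjoint union of power maps $w \mapsto w^d$, with the multiset of exponents $d$ determined by the cycle type of the image of a positively oriented generator of $\pi_1(D^\ast) \isom \Integ$ under the associated permutation representation; moreover each power map $D^\ast \to D^\ast$ extends, uniquely up to isomorphism of branched covers, to the branched cover $D \to D$, $w \mapsto w^d$ (``filling in the puncture''). This is the statement that makes the passage between branched covers and monodromy reversible over $B$. Verifying both the classification and the uniqueness of the extension is, I expect, the main obstacle: the classification follows quickly from $\pi_1(D^\ast)\isom\Integ$ and \cite{Hatcher}, but the uniqueness of the extension is the delicate point, where one uses that $D^\ast$ is dense in $D$ and that a homeomorphism of punctured disks commuting with power maps is, in suitable coordinates, multiplication by a root of unity and hence extends over the origin.

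For surjectivity I would, given $[\rho] \in \Peul_n$, invoke the classification of covering spaces to produce a degree-$n$ covering $p : \bar M \to \bar N$ whose associated permutation representation is $\rho$, then choose an evenly covered punctured-disk neighbourhood of each $b \in B$, apply the local model to see that its preimage under $p$ is a finite disjoint union of punctured disks mapping by power maps, glue in one disk per component, and extend $p$ accordingly. Since $N$ is compact and only finitely many points are added over the finite set $B$, the resulting $M$ is a compact closed surface and $f : M \to N$ is a branched cover of degree $n$ branched at most at $B$ with $\Gamma([f]) = [\rho]$. For injectivity, if $\Gamma(F_1) = \Gamma(F_2)$ then $\bar M_1$ and $\bar M_2$ have equivalent permutation representations over $\bar N$, hence are isomorphic covering spaces; I would fix an isomorphism $\bar h : \bar M_1 \to \bar M_2$ with $f_2 \circ \bar h = f_1$ and then extend it over the added points using the uniqueness half of the local model, which forces $\bar h$ to match the punctured-disk components of $f_1^{-1}(U_b^\ast)$ and $f_2^{-1}(U_b^\ast)$ compatibly with their power-map structures. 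Gluing the local extensions to $\bar h$ produces a homeomorphism $h : M_1 \to M_2$ with $f_2 \circ h = f_1$, so $F_1 = F_2$.

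Finally, for the transitivity clause, I would observe that $M \setminus \bar M$ is finite and, by the local model, each of its points has a neighbourhood meeting exactly one connected component of $\bar M$; hence the connected components of $M$ are in bijection with those of $\bar M$. Thus $M$ is connected if and only if $\bar M$ is, and $\bar M$ is connected if and only if the monodromy action of $\pi_1(\bar N,x)$ on $\{1,\dots,n\}$ is transitive, that is, if and only if $\rho(\pi_1(\bar N,x))$ is a transitive subgroup of $S_n$.
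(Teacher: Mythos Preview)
The paper does not supply its own proof of this theorem: it is quoted verbatim as ``part of Theorem 2.1'' from \cite{ezell1978branch} and used as a black box, so there is no in-paper argument to compare against. Your outline is the standard monodromy proof and is essentially correct; in particular the key local ingredient you isolate (classifying finite covers of $D^\ast$ by cycle type and showing the extension over the puncture exists and is unique up to isomorphism) is exactly what Ezell's argument rests on. One small sharpening: your claim that a self-homeomorphism of $D^\ast$ commuting with $w\mapsto w^d$ is ``multiplication by a root of unity'' is only literally true after both source and target have been put in the same model chart; what you actually need, and what your computation gives once charts $\alpha_1,\alpha_2,\beta$ are fixed with $\beta\circ f_i=(\cdot)^d\circ\alpha_i$, is that $\phi=\alpha_2\circ\bar h\circ\alpha_1^{-1}$ satisfies $\phi(w)^d=w^d$, hence $\phi(w)=\zeta w$ for a constant $d$-th root of unity $\zeta$ by connectedness of $D^\ast$, and this visibly extends to a homeomorphism of $D$. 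With that caveat your sketch goes through.
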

	Our focus will be on the special case where $N = \CPone$ is the Riemann sphere.

	\subsection{The Schreier Construction}\label{SchreierConstruction}
	
	Most of the material in this section appears in Section 17.5 of \cite{fried2005field}. We fill in a few minor details. Throughout \ref{SchreierConstruction}, we let $F$ denote a free group on a finite set.
	
	\begin{definition}
		A {\it basis\/} of $F$ is a subset $X$ of $F$ such that the inclusion map $X \hookrightarrow F$ has the universal property of the free group on $X$. Let $X$ be a basis of $F$.
		
		\begin{enumerate}
			
			\item Let $w = (w_1,\dots,w_s)$ be a tuple of elements of $X \cup X^{-1}$
			(we allow the case where $w$ is empty, i.e., $s=0$).
			If $w_iw_{i+1} \neq 1$ for all $i \in \{1,\dots,s-1\}$, we say that $w$ is {\it reduced}.
			If $w$ is not reduced, we can choose  $i \in \{1,\dots,s-1\}$ such that $w_iw_{i+1} = 1$ and delete $w_i$ and $w_{i+1}$ from $w$;
			if the tuple obtained in this way is still not reduced, we can repeat that deletion operation until we obtain a reduced tuple $w'$.
			It is well known that $w'$ is uniquely determined by $w$, i.e., is independent of the choices made in the sequence of deletions;
			we call $w'$ the {\it reduction\/} of $w$.
			
			\item For each $w \in F$, we define $w_X$ to be the unique reduced tuple $(w_1, \dots, w_s)$ of elements of $X \cup X^{-1}$ such that $w = w_1 \cdots w_s$.
			We then define $\length_X(w) = s$.
		\end{enumerate}
	\end{definition}

	\begin{definition}[Representation of right cosets.]\label{rightRepresentatives}
		
		Let $H$ be a subgroup of $F$ and $R$ a system of representatives of the right cosets of $F$ modulo $H$, so $F = \cup_{r \in R} Hr$. For each $f \in F$, define $\rho_R(f)$ to be the unique element $r \in R$ such that $Hf = Hr$. The set map $\rho = \rho_R : F \to R$ has the following properties:
		
		\begin{enumerate}
			\item[(2a)] $\rho(f) \in Hf $ for all $f \in F$,
			\item[(2b)] $\rho(hf) = \rho(f)$ for all $h \in H$ and all $f \in F$, 
			\item[(2c)] $\rho(F) = R$. 
		\end{enumerate}
		These conditions imply that 
		\begin{enumerate}
			\item[(3a)] $\rho(\rho(f)g) = \rho(fg)$ for all $f,g \in F$,
			\item[(3b)] $\rho(r) = r$ for all $r \in R$. 
		\end{enumerate}
		
		Conversely, if a subset $R \subset F$ and a function $\rho : F \to R$ satisfy conditions (2a)-(2c), then $R$ is a system of representatives of right cosets of $F$ modulo $H$ and $\rho = \rho_R$. 
		
	\end{definition}
	\begin{notation}
		When the system of representatives $R$ is understood from the context, we will abuse notation slightly and write $\rho : F \to R$ instead of $\rho_R : F \to R$. 
	\end{notation}
	
	\begin{remark}\label{FpermutesR}
		Let $H$ be a finite-index subgroup of $F$. Given a system of representatives $R$ of $F$ modulo $H$, every element of $F$ induces a permutation of $R$. Indeed, for each $g \in F$, the function $r \mapsto \rho(rg)$ is injective and hence is a permutation of $R$ (since $R$ is finite).
		To see this, let $r,r' \in R$ and suppose $\rho(rg) = \rho(r'g)$. Then $Hrg = Hr'g$, so $Hr = Hr'$ and since $r,r' \in R$, $r = r'$.     
	\end{remark}
	\begin{nothing}\label{DefinitionOfWellOrderingOnF}
		Every ordered basis $S = \{s_1, \dots, s_m\}$ of $F$ determines a well-ordering $\preceq$ on $F$, which we now define.
		First define a well-ordering on $\bigcup_{n \in \Nat}(S \cup S^{-1})^n$ by declaring that $(x_1,\dots,x_r) \le (y_1,\dots,y_s)$ if and only if one 
		of the following holds:
		\begin{itemize}
			\item $r<s$
			\item $r=s$ and $(x_1,\dots,x_r) \le (y_1,\dots,y_r)$ with respect to the lexicographical ordering on  $(S \cup S^{-1})^r$
			determined by $s_1 < s_1^{-1} < s_2 < s_2^{-1} < \dots < s_m < s_m^{-1}$.
		\end{itemize}
		Next, consider the injective map $w \mapsto w_S$ from $F$ to $\bigcup_{n \in \Nat}(S \cup S^{-1})^n$; 
		given $w,w' \in F$, declare that $w \preceq w'$ if and only if $w_S \le w'_S$.
		Note that $(F,\preceq)$ is a well-ordered set isomorphic to $(\Nat,\le)$.
	\end{nothing}

	\begin{definition}\label{SchreierTransversal}
		Let $S = \{s_1, \dots, s_m\}$ be an ordered basis of $F$ and $H$ a subgroup of $F$.
		For each $f \in F$, define $\rho(f) = \min(Hf)$, the minimum element of the set $Hf$ with respect to the well-ordering $\preceq$ of \ref{DefinitionOfWellOrderingOnF}.
		Then $R = \rho(F)$ is a system of representatives of the right cosets of $F$ modulo $H$, called the \textit{Schreier transversal} for $H$ in $F$.
		Clearly,  the Schreier transversal for $H$ in $F$ exists and is uniquely determined by $F$, $H$ and the ordered basis $S$. We note that if we define the length of a coset $Hf$ (where $f \in F$) by
		$$\length_S(Hf) = \min\setspec{\length_{S}(hf)}{h \in H},$$  
		and if $\rho : F \to R$ is the Schreier transversal determined by $F$, $H$ and $S$, then
		$$\text{$\length_S(\rho(f)) = \length_S(Hf)$ for each $f \in F$.}$$
		%
		%
	\end{definition}
	
	\begin{nothing}\label{transversalConstruction}
		Let $S = \{s_1, \dots, s_m\}$ be an ordered basis of $F$ and $H$ a finite-index subgroup of $F$.
		The following algorithm constructs the Schreier transversal $R$ for $H$ in $F$.
		First note that there is a unique isomorphism of posets $\phi : (F,\preceq) \to (\Nat,\le)$. 
		Let $f_i = \phi^{-1}(i) \in F$ for each $i \in \Nat$ (so $f_0=1$, $f_1=s_1$, $f_2=s_1^{-1}$, etc.).
		We begin by letting $R = \emptyset$ and letting $i = 0$. Proceed as follows until $R$ contains one element of each coset of $H$. 
		\begin{itemize}
			\item If $f_i$ is in the same coset as some element of $R$, set $i = i+1$.
			\item If $f_i$ is not in the same coset as any element of $R$, add $f_i$ to $R$ and set $i = i+1$.
		\end{itemize}
		Observe that this algorithm terminates and the obtained set $R$ is  the Schreier transversal for $H$ in $F$.
	\end{nothing}
	
	\begin{lemma}\cite[Lemma 17.5.2]{fried2005field}\label{systemProps}
		Let $F$ be a free group on $S$, let $H \leq F$ and let $t \in (S \cup S^{-1}) \setminus H$. Then, there exists a system of representatives $R$ of the right cosets of $F$ modulo $H$ with the following properties:
		\begin{enumerate}
			\item[{\rm(4a)}] $\length_S(\rho(f)) = \length_S(Hf)$ for each $f \in F$;
			\item[{\rm(4b)}] if $\rho(f) = w_1 w_2 \dots w_k$ is a reduced presentation of $\rho(f)$ (where $w_j \in S \cup S^{-1}$ for all $j$), then $w_1 w_2 \dots w_i \in R$ for each $i \in \{1,\dots, k\}$;
			\item[{\rm(4c)}] $1,t \in R$.
			
		\end{enumerate}
	\end{lemma}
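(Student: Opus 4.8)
The plan is to take $R$ to be the Schreier transversal of Definition~\ref{SchreierTransversal} for a suitably chosen ordered basis, and then verify the three properties directly. First I would reduce to the case $t = s_1$. Since $t \in S \cup S^{-1}$, there is an ordered basis $S' = (s_1', \dots, s_m')$ of $F$ with $S' \cup S'^{-1} = S \cup S^{-1}$ and $s_1' = t$: if $t \in S$ one permutes the elements of $S$, while if $t \notin S$ then $t^{-1} \in S$ and $(S \setminus \{t^{-1}\}) \cup \{t\}$ is again a basis of $F$ (replacing a generator by its inverse preserves the universal property), which one then reorders. Because $S' \cup S'^{-1} = S \cup S^{-1}$, the reduced tuple $w_S$ equals $w_{S'}$ for every $w \in F$, so $\length_S = \length_{S'}$ and the notion of a reduced presentation with letters in $S \cup S^{-1}$ is unchanged; since moreover (4c) does not involve $S$ at all, it is enough to prove the statement for the data $(F, H, S', t)$, and we may therefore assume $t = s_1$ from the outset. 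Let then $R = \rho(F)$ be the Schreier transversal for $H$ in $F$ determined by $F$, $H$ and the ordered basis $S$, where $\rho(f) = \min(Hf)$ with respect to the well-ordering $\preceq$ of~\ref{DefinitionOfWellOrderingOnF}.

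Property (4a) is immediate from the shape of $\preceq$: since $w \prec w'$ whenever $\length_S(w) < \length_S(w')$, the $\preceq$-minimum of a coset $Hf$ has minimal $S$-length among the elements of $Hf$, so $\length_S(\rho(f)) = \length_S(Hf)$. For (4c): $1$ is the $\preceq$-least element of $F$ and $1 \in H$, so $\rho(1) = 1 \in R$; and since $t = s_1 \notin H$ we have $\length_S(Ht) = 1$ (it is at most $\length_S(t) = 1$, and nonzero because $t \notin H$), so by (4a) $\rho(t)$ has $S$-length $1$, i.e.\ $\rho(t) \in S \cup S^{-1}$; as $s_1$ is the $\preceq$-least element of $S \cup S^{-1}$ and $\rho(t) \preceq s_1$ (because $s_1 = t \in Ht$), it follows that $\rho(t) = s_1 = t \in R$.

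The substantive point is (4b), the prefix-closedness of $R$, and this is where I expect the real work. Let $r = \rho(f)$ with reduced presentation $r = w_1 \cdots w_k$ (where $w_j \in S \cup S^{-1}$), fix $i \in \{1, \dots, k\}$, and put $p = w_1 \cdots w_i$ and $s = w_{i+1} \cdots w_k$, so that $r = ps$ with $\length_S(r) = \length_S(p) + \length_S(s)$ and $\length_S(p) = i$. Writing $q = \rho(p) = \min(Hp)$, I must show $q = p$, and then $p = \rho(p) \in R$. Since $qs \in Hps = Hr$ and $r = \min(Hr)$ we have $r \preceq qs$; as $r = ps$, it therefore suffices to prove $qs \preceq ps$, for this forces $qs = ps$, hence $q = p$. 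By (4a), $\length_S(q) = \length_S(Hp) \le i$. If $\length_S(q) < i$, or if cancellation occurs in the product $qs$ (so that $\length_S(qs) < \length_S(q) + \length_S(s)$), then $\length_S(qs) < \length_S(ps)$ and so $qs \prec ps$ by the length-first property of $\preceq$. In the remaining case $\length_S(q) = i$ and $(qs)_S = (q_1, \dots, q_i, w_{i+1}, \dots, w_k)$ where $q_S = (q_1, \dots, q_i)$; here $q \preceq p$ with both of $S$-length $i$ gives $q_S \le p_S = (w_1, \dots, w_i)$ lexicographically, and appending the common suffix $(w_{i+1}, \dots, w_k)$ preserves the inequality, so again $qs \preceq ps$. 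Thus $qs \preceq ps$ in all cases, which proves (4b). The only mildly delicate bookkeeping is this split into "cancellation occurs in $qs$" versus "no cancellation" when comparing $qs$ with $r = ps$; once that is handled, everything comes down to the definition of $\preceq$.
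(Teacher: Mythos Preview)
Your proof is correct. The paper itself does not supply a proof of Lemma~\ref{systemProps}, citing it instead from \cite{fried2005field}; the closest analogue in the paper is Lemma~\ref{SchreierTransversalLemma}, which verifies (4b) for the Schreier transversal under the standing hypothesis $1,t\in R$. Your argument for (4b) is essentially the same as that proof (direct rather than by contradiction, and for an arbitrary prefix rather than only the penultimate one, but the same length/lexicographic dichotomy). Your treatment of (4c), via the reduction to $t=s_1$ by reordering or inverting a basis element so that $S'\cup S'^{-1}=S\cup S^{-1}$, is an addition not present in the paper: the paper instead arranges $1,t\in R$ externally (see~\ref{transversalConstruction} and the Step~1 discussion, where the labelling of the fibre is chosen so that $\gamma_1\notin H$). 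Both routes are fine; yours makes Lemma~\ref{systemProps} self-contained, while the paper's separates the existence of the transversal from the verification that a particular candidate works.
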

	
	\begin{nothing}[The Schreier Construction]\label{schreierConstruction}
		Let $S$ be a basis of $F$ and $H$ a subgroup of $F$.
		Let $R$ be a system of representatives of the right cosets of $F$ modulo $H$ and let $t \in (S \cup S^{-1}) \setminus H$. Then $R$ is called a \textit{Schreier system with respect to $(S,H,t)$} if it satisfies conditions (4a)-(4c) in Lemma \ref{systemProps}. Assuming that $R$ is a Schreier system with respect to $(S,H,t)$, define a map $\phi_R: R \times S\to F$ by 
		$$\phi_R(r,s) = rs \rho(rs)^{-1} \text{ for all $r \in R, s \in S$}$$ 
		and consider the set 
		$$Y_R = \setspec{\phi_R(r,s)}{r \in R \text{ and } s \in S} \setminus \{1\}.$$
		Then $H$ is a free group, and $Y_R$ is a basis of $H$ called the \textit{Schreier basis of $H$ with respect to $S,t$.} If the rank of $F$ is $e$ and $[F:H] = n$, then the rank of $H$ is $1+n(e-1)$. (See Proposition 17.5.6 in \cite{fried2005field}.) We also define the multiset $$\bar{Y}_R =\left [\phi_R(r,s) \mid r \in R \text{ and } s \in S\right],$$
		noting that we allow repeated elements, including the identity element. 
	\end{nothing}
	
	\begin{lemma}\label{stupidBijection} If $(r,s)$ and $(r',s')$ in $R \times S$ satisfy $\phi_R(r,s) = \phi_R(r',s')$ then one of the following holds:
		\begin{itemize}
			\item $\phi_R(r,s) = \phi_R(r',s') = 1$;
			\item $r = r'$ and $s = s'$.
		\end{itemize}
	\end{lemma}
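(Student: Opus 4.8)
The plan is to argue by contradiction: suppose $\phi_R(r,s) = \phi_R(r',s') \neq 1$ but $(r,s) \neq (r',s')$, and derive a contradiction from the Schreier-system properties (4a)--(4c). Writing out the definition, the hypothesis says $rs\rho(rs)^{-1} = r's'\rho(r's')^{-1}$. Set $u = \rho(rs)$ and $u' = \rho(r's')$; both $u$ and $u'$ lie in $R$, and the equation rearranges to $rs = r's'u'^{-1}u$, or equivalently $s'^{-1}r'^{-1}rs = u'^{-1}u$. The key observation is that since $\phi_R(r,s) = rs\rho(rs)^{-1} \in H$ (a coset representative differs from the element by an element of $H$), and $\phi_R(r,s) \neq 1$, we know $rs \neq u$, so $r$ and $rs$ lie in different cosets; in particular $rs \notin R$ unless... — more precisely, $\rho(rs) = u \neq rs$ means $rs$ is not its own coset representative.

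The main work is a careful length/prefix analysis. First I would observe that $\length_S(rs)$ and $\length_S(u) = \length_S(Hrs)$ can differ by at most something controlled: since $u \in Hrs$, property (4a) gives $\length_S(u) \le \length_S(rs)$. Because $s \in S$, we have $\length_S(rs) \in \{\length_S(r) - 1, \length_S(r)+1\}$; and since $r \in R$, property (4a) applied to $r$ gives $\length_S(r) = \length_S(Hr)$, which is minimal in its coset. I would use property (4b), the prefix-closure of $R$: if $rs$ reduces without cancellation (length $\length_S(r)+1$), then I claim $rs \in R$, forcing $u = \rho(rs) = rs$ and hence $\phi_R(r,s) = 1$, a contradiction; so in the nontrivial case there must be cancellation, i.e. the last letter of the reduced word for $r$ is $s^{-1}$, and then $rs$ is a proper prefix of $r$, hence $rs \in R$ by (4b) again — so once more $u = rs$ and $\phi_R(r,s) = 1$. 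This shows that whenever $\phi_R(r,s) \neq 1$, in fact $\rho(rs) \neq rs$ can only happen... — so actually the nontrivial values of $\phi_R$ arise precisely from the ``new'' pairs, and I need to pin down that two distinct such pairs cannot give the same element.

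For the final step, having reduced to the case where $rs$ and $r's'$ each require cancellation or each extend to a new coset, I would compare the reduced words. If $r \ne r'$, then because both are in the prefix-closed set $R$ and (4a) makes them shortest in their cosets, the reduced words $r_S$ and $r'_S$ are two distinct Schreier representatives; appending $s, s'$ respectively and reducing, the resulting words $\phi_R(r,s) = rs u^{-1}$ and $\phi_R(r',s') = r's' u'^{-1}$ — where $u, u'$ are themselves prefix-closed representatives — would have to coincide as reduced words in $F$, and a direct inspection of initial segments (the representative part $r$ versus $r'$, which is ``visible'' at the front of the reduced word for $\phi_R(r,s)$ after the cancellation with $u^{-1}$ is accounted for) forces $r = r'$; then the equation collapses to $s\rho(rs)^{-1} = s'\rho(rs')^{-1}$, and a parallel argument on the $s$-part forces $s = s'$. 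I expect the main obstacle to be bookkeeping the cancellations cleanly: one must track exactly how much of $r$ survives in the reduced form of $rs\rho(rs)^{-1}$, and the cleanest route is probably to invoke that $\rho(rs)$ shares a long common prefix with $rs$ (again via (4a)+(4b)), so that $\phi_R(r,s)$ ``remembers'' enough of both $r$ and $s$ to reconstruct the pair.
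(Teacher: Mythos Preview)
Your approach is genuinely different from the paper's, but as written it contains a real error. The paper gives a two-line counting argument: since the rank of $H$ is $1+n(e-1)$ (quoted from \cite{fried2005field}), and exactly $n-1$ of the $ne$ pairs $(r,s)$ satisfy $\phi_R(r,s)=1$ (also quoted), the restriction of $\phi_R$ to $(R\times S)\setminus\phi_R^{-1}\{1\}$ is a surjection between finite sets of the same cardinality, hence a bijection. No word analysis is needed.

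Your direct word-combinatorics route can be made to work, but the step ``if $rs$ reduces without cancellation then $rs\in R$'' is false: property~(4b) says that \emph{prefixes} of elements of $R$ lie in $R$, not that appending a letter to an element of $R$ stays in $R$. As written, your two cases (cancellation / no cancellation) both conclude $\phi_R(r,s)=1$, which would make $Y_R$ empty. The correct analysis runs the other way. Assume $\phi_R(r,s)\neq1$, i.e.\ $rs\notin R$. First, the last letter of $r$ cannot be $s^{-1}$ (else $rs$ is a prefix of $r$, hence in $R$ by~(4b)); so the word $r\cdot s$ is reduced. Second, with $u=\rho(rs)$, the last letter of $u$ cannot be $s$ (else $us^{-1}$ is a prefix of $u$, hence in $R$, and lies in $Hr$, forcing $us^{-1}=r$ and $rs=u\in R$); so $s\cdot u^{-1}$ is also reduced. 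Thus the reduced word for $\phi_R(r,s)$ is literally the concatenation $r\cdot s\cdot u^{-1}$. Now if $\phi_R(r,s)=\phi_R(r',s')\neq1$ with, say, $\length_S(r)<\length_S(r')$, then $rs$ is a prefix of $r'\in R$, so $rs\in R$ by~(4b), contradiction; by symmetry $\length_S(r)=\length_S(r')$, whence $r=r'$, $s=s'$. This is the argument your final paragraph is gesturing at, but the earlier misstep needs to be replaced by the two observations above before the prefix comparison goes through.
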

	\begin{proof}
		There are exactly $ne$ elements of form $\phi_R(r,s)$ where $r \in R$ and $s \in S$ allowing for multiplicity. The proof of Proposition 17.5.7 in \cite{fried2005field} shows that $n-1$ of them are the identity. Thus $|(R \times S) \setminus \phi_R^{-1}\{1\}| = ne - (n-1) = 1+n(e-1) = |Y_R|$ and hence $\phi_R$ induces a bijection between elements of $(R \times S) \setminus \phi_R^{-1}\{1\}$ and $Y_R$. The result follows. 
	\end{proof}

	Lemma \ref{SchreierTransversalLemma} appears in Kiyoshi Igusa's notes on the Nielsen-Schreier Theorem. Since we could not find a formal reference, we include his proof. 
	
	\begin{lemma}\label{SchreierTransversalLemma}\cite[Lemma 27.5]{IgusaNielsenSchreier}
		Let $R$ be a Schreier transversal of $H$ in $F$ such that $1,t \in R$ and $t \notin H$. Then $R$ is a Schreier system of representatives with respect to $(S,H,t)$. 
	\end{lemma}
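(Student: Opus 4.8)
The plan is to verify the three defining conditions (4a)--(4c) of Lemma \ref{systemProps} for $R$ with respect to $(S,H,t)$. Condition (4c) is part of the hypothesis. Condition (4a) was already recorded in Definition \ref{SchreierTransversal}, and it holds for an immediate reason: the well-ordering $\preceq$ of \ref{DefinitionOfWellOrderingOnF} compares elements of $F$ first by $\length_S$, so the minimum $\rho(f)=\min(Hf)$ of a coset automatically has minimal $\length_S$ among the elements of $Hf$, giving $\length_S(\rho(f))=\length_S(Hf)$. Thus the whole content of the lemma is the prefix-closedness condition (4b), and this is where I would concentrate the work.

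For (4b), I would fix $f\in F$, set $w:=\rho(f)$, and let $(w_1,\dots,w_k)=w_S$ be its reduced tuple, so that $\length_S(w)=k=\length_S(Hf)$ by (4a). Fix $i\in\{1,\dots,k\}$ and put $u:=w_1\cdots w_i$; since a prefix of a reduced tuple is reduced, $u_S=(w_1,\dots,w_i)$ and $\length_S(u)=i$. The goal is $u=\rho(u)=\min(Hu)$. First I would check that $\length_S(Hu)=i$: if some $h\in H$ satisfied $\length_S(hu)<i$, then $hw=(hu)\,w_{i+1}\cdots w_k$ would lie in $Hf$ with $\length_S(hw)\le\length_S(hu)+(k-i)<k=\length_S(Hf)$, a contradiction; together with $\length_S(u)=i$ this forces $\length_S(Hu)=i$, so $u$ has minimal length in its coset.

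Next, suppose for contradiction that $v:=\rho(u)\prec u$. By (4a) we have $\length_S(v)=\length_S(Hu)=i=\length_S(u)$, so $v_S$ and $u_S$ are reduced tuples of the same length $i$, and $v\prec u$ then means precisely that $v_S$ is lexicographically smaller than $u_S$. Now consider $v\,w_{i+1}\cdots w_k$; it lies in $Hu\,w_{i+1}\cdots w_k=Hf$, and $\length_S(v\,w_{i+1}\cdots w_k)\le\length_S(v)+(k-i)=k$. Since $w$ realizes the minimal coset length $k$ in $Hf$, we must have $\length_S(v\,w_{i+1}\cdots w_k)=k$; hence the concatenation of the reduced tuple $v_S$ with $(w_{i+1},\dots,w_k)$ is itself reduced and is the reduced tuple of $v\,w_{i+1}\cdots w_k$, just as $(w_{i+1},\dots,w_k)$ appended to $u_S$ is the reduced tuple $w_S$ of $w$. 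Because $v_S$ and $u_S$ have equal length, appending the common suffix $(w_{i+1},\dots,w_k)$ does not change the first coordinate at which they differ, so the reduced tuple of $v\,w_{i+1}\cdots w_k$ is lexicographically smaller than $w_S$; as both tuples have length $k$, this gives $v\,w_{i+1}\cdots w_k\prec w=\min(Hf)$, a contradiction. Therefore $\rho(u)=u$, i.e.\ $u\in R$, which is exactly (4b).

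The step I expect to demand the most care is the contradiction argument just sketched: one has to observe that no cancellation can occur when forming the product $v\,w_{i+1}\cdots w_k$ (this is precisely where the minimality of $\length_S(Hf)$ is used), and that the lexicographic comparison of reduced tuples is preserved under right-concatenation of a common suffix --- a fact that holds only because the two prefixes $v_S$ and $u_S$ have the same length. Both points are elementary, but they must be extracted directly from the description of $\preceq$ in \ref{DefinitionOfWellOrderingOnF} rather than merely quoted.
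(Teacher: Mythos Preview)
Your proof is correct and follows essentially the same approach as the paper's. The only cosmetic difference is that the paper argues just for the immediate prefix $w=s_1\cdots s_{k-1}$ of $s_1\cdots s_k\in R$ (which suffices by downward induction on $i$), whereas you treat an arbitrary prefix $u=w_1\cdots w_i$ directly; the core contradiction---appending the remaining suffix to a strictly $\preceq$-smaller representative of the prefix coset yields an element of $Hf$ that contradicts $w=\min(Hf)$---is identical, and you are, if anything, slightly more explicit about why no cancellation occurs in the concatenation.
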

	\begin{proof}
		We need only check that (4b) is satisfied, since (4a) and (4c) are satisfied by assumption. Suppose that the word $s_1s_2 \dots s_{k-1}s_k$ is in the Schreier transversal $R$ but $w = s_1s_2\dots s_{k-1}$ is not. Then $\rho(w) \neq w$ so either 
		\begin{itemize}
			\item[\rm(i)] $\length_S(\rho(w))<\length_S(w)$ or
			\item[\rm(ii)] $\length_S(\rho(w))=\length_S(w)$ and $\rho(w)_S$ comes before $w_S$ in lexicographical order. 
		\end{itemize} 
		In case (i), $\rho(w)s_k \in H\rho(w) s_k = Hws_k$ is shorter than $ws_k$ contradicting the assumption
		that $ws_k \in R$ has minimal length among elements of its cosets. In case (ii), we have 
		$$\length_S(\rho(w)s_k) \leq \length_S(\rho(w)) + 1 =  \length_S(w) + 1 = \length_S(ws_k).$$
		If $\length_S(\rho(w)s_k) < \length_S(ws_k)$, then we once again have the contradiction that $\rho(w)s_k \in H\rho(w) s_k = Hws_k$ is shorter than $ws_k$. It follows that $\length_S(\rho(w)s_k) = \length_S(ws_k)$ and  
		$(\rho(w)s_k)_S \in Hws_k$ comes before $(ws_k)_S$ in the
		lexicographical order; but this contradicts the assumption that $(ws_k)_S$ comes first in lexicographical order among elements of minimal length in its coset. 
	\end{proof}		
	
	\begin{nothing}\label{SchreierRewriting}\textit{The Schreier Rewriting Process.} [Lemma 17.5.4(a) of \cite{fried2005field}]  Let $R$ be a Schreier system of representatives with respect to $(S,H,t)$ and let $h = s_1 s_2 \dots s_k \in H \leq F$ be a (not necessarily reduced) word. For each $0 \leq i \leq k$, let $g_i = s_1 s_2 \dots s_i$ so that $g_0$ is the empty word  and $g_k = h$. Then, we can write 
		$$ h = \prod_{i = 1}^k \rho(g_{i-1})s_i \rho(g_i)^{-1}.$$  
		
		We call this expression of $h$ the \textit{Schreier decomposition of $h$}. We define the \textit{reduced Schreier decomposition of $h$} to be the expression of $h$ obtained from the Schreier decomposition of $h$ after removing those $\rho(g_{i-1})s_i \rho(g_i)^{-1}$ that are trivial.
	\end{nothing}
	
	\begin{remark}\label{SchreierUnique}
		Given $F,H,R$ and a word $h \in H \leq F$, the Schreier decomposition and the reduced Schreier decomposition of $h$ are unique.  
	\end{remark}
	
	\section{Fundamental Words}\label{fundamentalWords}
	
	\subsection{Preliminaries}
	
	\begin{notation} We write $G = \lb X \mid r_1, \dots, r_k \rb$ to denote the quotient of the free group $F$ on the set $X$ by the normal subgroup generated by the elements $r_1, \dots, r_k \in F$. 
	\end{notation}
	
	Throughout Section \ref{fundamentalWords}, $F$ is a free group on a finite set. 
	\begin{lemma}[\cite{knappBasicAlgebra}, Proposition 7.16] \label {lkjDnCbvfsdf4rr8BqA2ckjhxgfz}
		Let $A$ and $B$ be disjoint sets, let $W \subset F(A)$ and let $V \subset F(B)$. Suppose that $\lb A \mid W \rb$ is a presentation of $G_1$ and $ \lb B \mid V \rb$ is a presentation of $G_2$. Then $\lb A \cup B \mid V,W \rb$ is a presentation of $G_1 \star G_2$.  
	\end{lemma}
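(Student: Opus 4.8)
The plan is to prove the lemma by showing that the group $Q := \lb A \cup B \mid V, W\rb$ and the free product $G_1 \star G_2$ co-represent the same set-valued functor on the category of groups, and then to invoke the Yoneda lemma. Concretely, I would fix an arbitrary group $K$ and analyze $\operatorname{Hom}(Q,K)$. By the universal property of a group presentation, a homomorphism $Q \to K$ is the same datum as a set map $\phi : A \cup B \to K$ whose canonical extension $\widetilde{\phi} : F(A \cup B) \to K$ satisfies $\widetilde{\phi}(w) = 1$ for every $w \in V \cup W$.

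Now I would exploit the hypotheses. Since $A$ and $B$ are disjoint, giving a set map $A \cup B \to K$ is the same as giving a pair $(\phi_A,\phi_B)$ of set maps $A \to K$ and $B \to K$. Since $W \subseteq F(A)$ and $V \subseteq F(B)$, and since the restriction of $\widetilde{\phi}$ to the subgroup $F(A) \leq F(A \cup B)$ is precisely the canonical extension of $\phi_A$ (and similarly for $B$), the requirement $\widetilde{\phi}(V \cup W) = \{1\}$ is equivalent to the conjunction of: the canonical extension $F(A) \to K$ of $\phi_A$ kills $W$, and the canonical extension $F(B) \to K$ of $\phi_B$ kills $V$. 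Because $\lb A \mid W \rb$ is a presentation of $G_1$, the first condition is equivalent to $\phi_A$ extending (uniquely) to a homomorphism $G_1 \to K$; likewise for $G_2$. This yields a bijection
\[
\operatorname{Hom}(Q, K) \xrightarrow{\ \sim\ } \operatorname{Hom}(G_1, K) \times \operatorname{Hom}(G_2, K),
\]
which is visibly natural in $K$ (it is just ``restrict and descend''). On the other hand, the universal property of the free product gives a natural bijection $\operatorname{Hom}(G_1 \star G_2, K) \cong \operatorname{Hom}(G_1,K) \times \operatorname{Hom}(G_2,K)$. Composing these, $Q$ and $G_1 \star G_2$ represent the same functor, so the Yoneda lemma produces a canonical isomorphism $G_1 \star G_2 \xrightarrow{\ \sim\ } Q$; unwinding it, it sends the image of $a$ in $G_1$ to the image of $a$ in $Q$ (for $a \in A$) and the image of $b$ in $G_2$ to the image of $b$ in $Q$ (for $b \in B$), which is exactly the assertion that $\lb A \cup B \mid V, W\rb$ is a presentation of $G_1 \star G_2$.

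If one prefers an argument without the Yoneda lemma, I would instead build the two maps by hand: use the universal property of $F(A \cup B) \cong F(A) \star F(B)$ together with $F(A) \to G_1 \hookrightarrow G_1 \star G_2$ and $F(B) \to G_2 \hookrightarrow G_1 \star G_2$ to get a surjection $F(A \cup B) \twoheadrightarrow G_1 \star G_2$ killing $V \cup W$, hence a surjection $\psi : Q \to G_1 \star G_2$; and conversely use the universal property of the free product applied to the maps $G_i \to Q$ induced by $F(A) \to F(A\cup B) \to Q$ and $F(B) \to F(A \cup B) \to Q$ to get $\varphi : G_1 \star G_2 \to Q$, then check that $\varphi\psi$ and $\psi\varphi$ fix all generators. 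The one point requiring care in this version — and really the only non-formal point in the whole proof — is verifying that the composite $F(A) \to F(A \cup B) \to Q$ does descend to $G_1$: its kernel is a normal subgroup of $F(A)$ containing $W$, hence it contains the normal closure of $W$ computed \emph{inside} $F(A)$, which is exactly the kernel of $F(A) \twoheadrightarrow G_1$. So I expect the main obstacle to be nothing more than bookkeeping among the three normal closures in play — that of $W$ in $F(A)$, that of $V$ in $F(B)$, and that of $V \cup W$ in $F(A \cup B)$; once the containment just noted is recorded, both versions of the proof reduce to routine verifications on generators.
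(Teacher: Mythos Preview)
Your argument is correct: both the Yoneda-style version and the explicit two-maps version establish the lemma, and the only substantive point---that the composite $F(A) \to F(A\cup B) \to Q$ factors through $G_1$ because the kernel is normal in $F(A)$ and contains $W$---is handled cleanly.

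There is nothing to compare against in the paper itself: the lemma is quoted as \cite[Proposition~7.16]{knappBasicAlgebra} with no proof supplied. Knapp's proof proceeds by the second of your two routes (constructing the map $G_1 \star G_2 \to Q$ from the universal property of the coproduct, the map $Q \to G_1 \star G_2$ by extending $A \cup B \to G_1 \star G_2$ and checking the relators die, and then verifying the composites are the identity on generators). Your Yoneda formulation is a slick repackaging of exactly the same content; it has the advantage of making naturality automatic and avoiding the generator-chase, at the cost of requiring the reader to be comfortable with representability arguments.
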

	
	\begin{lemma} \label {o8ytu1y2qwoiekdmwes7s}
		Let $X$ be a basis of $F$, $\{A,B\}$ a partition of $X$, and $\mu : A \to F$ a set map satisfying:
		$$
		\text{for each $a \in A$, there exist $u,v \in F(B)$ and $\epsilon \in \{1,-1\}$ such that $\mu(a) = ua^\epsilon v$.}
		$$
		Then $\mu$ is injective, $\mu(A) \cap B  = \emptyset$, and $\mu(A) \cup B$ is a basis of $F$.
	\end{lemma}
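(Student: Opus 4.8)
The plan is to realize the assignment $a \mapsto \mu(a)$ (for $a \in A$) together with $b \mapsto b$ (for $b \in B$) as an automorphism of $F$, and then deduce all three conclusions from standard facts about automorphisms of free groups. Since $X = A \sqcup B$ is a basis of $F$, the universal property yields a unique endomorphism $\psi : F \to F$ with $\psi(a) = \mu(a)$ for $a \in A$ and $\psi(b) = b$ for $b \in B$. Everything reduces to showing that $\psi$ is an automorphism.

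To do this I would factor $\psi$ into elementary pieces. Write $A = \{a_1, \dots, a_n\}$ and, for each $i$, fix $u_i, v_i \in F(B)$ and $\epsilon_i \in \{1,-1\}$ with $\mu(a_i) = u_i a_i^{\epsilon_i} v_i$. Let $\theta_i : F \to F$ be the endomorphism fixing every element of $X \setminus \{a_i\}$ and sending $a_i \mapsto u_i a_i^{\epsilon_i} v_i$. Each $\theta_i$ is an automorphism: the endomorphism $\theta_i'$ fixing $X \setminus \{a_i\}$ and sending $a_i \mapsto u_i^{-1} a_i v_i^{-1}$ when $\epsilon_i = 1$, resp. $a_i \mapsto v_i a_i^{-1} u_i$ when $\epsilon_i = -1$, is a two-sided inverse of $\theta_i$; checking $\theta_i \circ \theta_i' = \theta_i' \circ \theta_i = \mathrm{id}$ on the basis $X$ is immediate once one observes that $u_i, v_i \in F(B)$ are fixed by both $\theta_i$ and $\theta_i'$. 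Next I would verify that $\psi = \theta_1 \circ \theta_2 \circ \cdots \circ \theta_n$ by evaluating both sides on $X$: on $b \in B$ both give $b$; on $a_j$, applying $\theta_n, \theta_{n-1}, \dots$ in turn leaves $a_j$ untouched until $\theta_j$ sends it to $u_j a_j^{\epsilon_j} v_j$, and each remaining $\theta_i$ with $i < j$ fixes this word because $a_j$ and all letters occurring in $u_j, v_j \in F(B)$ lie in $X \setminus \{a_i\}$. Hence the composite sends $a_j$ to $\mu(a_j)$, so it equals $\psi$, and as a composite of automorphisms $\psi$ is an automorphism.

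I expect this factorization step to demand the most care — not because it is deep, but because the bookkeeping of the composition genuinely uses the hypothesis $u_i, v_i \in F(B)$ (and not merely that $u_i, v_i$ avoid the letter $a_i$), so it is worth isolating that observation explicitly rather than treating the claim $\psi = \theta_1 \circ \cdots \circ \theta_n$ as self-evident.

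Finally, the three conclusions follow formally. An automorphism of $F$ carries a basis to a basis, so $\psi(X) = \psi(A) \cup \psi(B) = \mu(A) \cup B$ is a basis of $F$. Since $\psi$ is injective and $\psi|_A = \mu$, the map $\mu$ is injective. And if $\mu(a) = b \in B$ for some $a \in A$, then $\psi(a) = \mu(a) = b = \psi(b)$, so injectivity of $\psi$ forces $a = b$, contradicting $A \cap B = \emptyset$; hence $\mu(A) \cap B = \emptyset$.
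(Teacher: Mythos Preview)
Your proof is correct and takes a genuinely different route from the paper's. The paper argues directly: it first shows, by inspecting reduced words, that an equation $u_1 a_1^{\epsilon_1} v_1 = u_2 a_2^{\epsilon_2} v_2$ with $a_i \in A$ and $u_i,v_i \in F(B)$ forces $(a_1,u_1,v_1,\epsilon_1) = (a_2,u_2,v_2,\epsilon_2)$, from which injectivity of $\mu$ and $\mu(A)\cap B=\emptyset$ follow; it then verifies the universal property of $\mu(A)\cup B$ by hand, explicitly constructing the homomorphism $\Phi:F\to G$ extending any given set map $f:\mu(A)\cup B\to G$. Your approach is more structural: you realize the change of generators as an automorphism of $F$ by factoring it into Nielsen-type elementary automorphisms $\theta_i$, after which all three conclusions drop out formally from the injectivity of $\psi$ and the fact that automorphisms carry bases to bases. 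Your argument is cleaner and makes transparent \emph{why} the lemma holds, though it uses the finiteness of $A$ (to form the finite composite $\theta_1\circ\cdots\circ\theta_n$) and invokes the standard fact about automorphisms and bases; the paper's verification is more self-contained and would go through verbatim even for infinite $A$. As a bonus, the paper's reduced-word argument yields the uniqueness of the triple $(u_a,v_a,\epsilon_a)$ for each $a$, a side fact your approach does not need and does not produce.
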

	
	\begin{proof}
		We claim:
		\begin{equation}  \label {kjbi90o2qwbdxc0}
			\begin{minipage}{.8\textwidth}
				If $a_1,a_2 \in A$, $u_1,v_1,u_2,v_2 \in F(B)$ and $\epsilon_1,\epsilon_2 \in \{1,-1\}$ are such that $u_1 a_1^{\epsilon_1} v_1 = u_2 a_2^{\epsilon_2} v_2$,
				then $(a_1, u_1, v_1, \epsilon_1) = (a_2, u_2, v_2, \epsilon_2)$.
			\end{minipage}
		\end{equation}
		Observe that $u_1 a_1^{\epsilon_1} v_1 = u_2 a_2^{\epsilon_2} v_2$ implies that $(u_2^{-1}u_1) a_1^{\epsilon_1} (v_1 v_2^{-1}) a_2^{-\epsilon_2} = 1$.
		There exist $\beta_1, \dots, \beta_r, \gamma_1, \dots, \gamma_r \in B \cup B^{-1}$
		such that $u_2^{-1}u_1 = \beta_1 \cdots \beta_r$, $v_1 v_2^{-1} = \gamma_1 \cdots \gamma_s$,
		$\beta_i\beta_{i+1} \neq 1$ for all $i$ and $\gamma_j\gamma_{j+1} \neq 1$ for all $j$.
		Then $(\beta_1 \cdots \beta_r) a_1^{\epsilon_1} (\gamma_1 \cdots \gamma_s) a_2^{-\epsilon_2} = 1$,
		which implies that $\beta_1 \cdots \beta_r = 1$ and $\gamma_1 \cdots \gamma_s = 1$, so $u_1=u_2$ and $v_1=v_2$.
		It then follows that $a_1^{\epsilon_1} = a_2^{\epsilon_2}$, and this implies that $a_1=a_2$ and $\epsilon_1 = \epsilon_2$. 
		This proves \eqref{kjbi90o2qwbdxc0}.
		
		It follows from \eqref{kjbi90o2qwbdxc0} that $\mu : A \to F$ is injective, and that for each $a \in A$ there exists a unique triple $(u_a, v_a, \epsilon_a)$
		such that  $u_a, v_a \in F(B)$, $\epsilon_a \in \{1,-1\}$ and $\mu(a) = u_a a^{\epsilon_a} v_a$.
		
		If $a \in A$ is such that $\mu(a) \in B$ then $u_a a^{\epsilon_a} v_a \in B$, so $a \in F(B)$, which is absurd. So $\mu(A) \cap B  = \emptyset$.
		
		Let $G$ be a group and $f : \mu(A) \cup B \to G$ a set map. 
		We have to show that there exists a unique homomorphism $\Phi : F \to G$ such that
		\begin{equation}  \label {lkjbjydg182qubsio}
			\text{$\Phi(x) = f(x)$ for all $x \in \mu(A) \cup B$.}
		\end{equation}
		It is easy to see that $\mu(A) \cup B$ is a generating set of $F$, so $\Phi$ is unique if it exists.
		Let us prove that $\Phi$ exists.
		Consider the unique $\phi : F(B) \to G$ such that $\phi(b) = f(b)$ for all $b \in B$.
		We use $\phi$ to define a set map $\fgoth : A \cup B \to G$ by:
		\begin{align*}
			\fgoth(a) &= [ \phi(u_a)^{-1} f( \mu(a) ) \phi(v_a)^{-1} ]^{\epsilon_a} \quad \text{for all $a \in A$,} \\
			\fgoth(b) &= f(b) \quad \text{for all $b \in B$.}
		\end{align*}
		Since $A \cup B$ is a basis of $F$, there exists a unique group homomorphism $\Phi : F \to G$ such that $\Phi(x) = \fgoth(x)$ for all $x \in A \cup B$.
		Observe that $\Phi(b) = \fgoth(b) = f(b) = \phi(b)$ for all $b \in B$; it follows that the following two statements are true:
		\begin{gather*}
			\text{$\Phi(w) = \phi(w)$ for all $w \in F(B)$,} \\
			\text{$\Phi(x) = f(x)$ for all $x \in B$.}
		\end{gather*}
		If $a \in A$ then
		\begin{align*}
			\Phi( \mu(a) )
			&= \Phi( u_a a^{\epsilon_a} v_a ) = \Phi(u_a) \Phi(a)^{\epsilon_a} \Phi(v_a) = \phi(u_a) \fgoth(a)^{\epsilon_a} \phi(v_a) \\
			&= \phi(u_a)  \big( [ \phi(u_a)^{-1} f( \mu(a) ) \phi(v_a)^{-1} ]^{\epsilon_a}\big) ^{\epsilon_a} \phi(v_a) \\
			&= \phi(u_a)   \phi(u_a)^{-1} f( \mu(a) ) \phi(v_a)^{-1}  \phi(v_a) = f( \mu(a) ) ,
		\end{align*}
		showing that $\Phi$ satisfies \eqref{lkjbjydg182qubsio}.
	\end{proof}
	
	\begin{notation}
		Let $G$ be a group and let $x,y,g,h \in G$. Define $g^x = x^{-1} g x$. Given a subset $A \subseteq G$, we define $A^x = \setspec{x^{-1}ax}{a \in A}$. Note that $(g^x)^y = g^{xy}$, $(gh)^x = g^x h^x$ and $(g^{-1})^x = (g^x)^{-1}$. 
	\end{notation}
	
	\begin{corollary} \label {jhuy5E1r27iuwehbd90}
		Let $X$ be a basis of $F$, $\{A,B\}$ a partition of $X$, and $x$ an element of the subgroup $F(B)$ of $F$.
		Then $A^x \cap B = \emptyset$ and $A^x \cup B$ is a basis of $F$.
	\end{corollary}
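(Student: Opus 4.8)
The plan is to deduce this as an immediate specialization of Lemma \ref{o8ytu1y2qwoiekdmwes7s}. Define a set map $\mu : A \to F$ by $\mu(a) = a^x = x^{-1} a x$. Since $x$ lies in the subgroup $F(B)$ of $F$, so does $x^{-1}$; hence, taking $u = x^{-1}$, $v = x$ and $\epsilon = 1$, we have $u, v \in F(B)$ and $\mu(a) = u a^{\epsilon} v$ for every $a \in A$. Thus $\mu$ satisfies the hypothesis of Lemma \ref{o8ytu1y2qwoiekdmwes7s}.

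Applying that lemma gives that $\mu$ is injective, that $\mu(A) \cap B = \emptyset$, and that $\mu(A) \cup B$ is a basis of $F$. It remains only to observe that $\mu(A) = \setspec{x^{-1} a x}{a \in A} = A^x$ as subsets of $F$, so the conclusions of the lemma read exactly as $A^x \cap B = \emptyset$ and $A^x \cup B$ is a basis of $F$, which is the assertion of the corollary.

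I do not expect any real obstacle here; the corollary is a direct application. The one point worth noting is why the naive argument — conjugation by $x$ is an automorphism of $F$, hence sends the basis $X = A \sqcup B$ to the basis $A^x \sqcup B^x$ — does not by itself finish the job: it produces the basis $A^x \cup B^x$ rather than $A^x \cup B$, and $B^x \neq B$ in general. Lemma \ref{o8ytu1y2qwoiekdmwes7s} is precisely the tool that lets one replace $B^x$ by $B$, which is why the proof routes through it; the only thing to verify carefully is that $x^{-1}, x \in F(B)$, which holds because $F(B)$ is a subgroup of $F$.
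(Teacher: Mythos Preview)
Your proof is correct and follows exactly the same approach as the paper: define $\mu(a) = a^x$ and apply Lemma \ref{o8ytu1y2qwoiekdmwes7s}. Your added verification that $u = x^{-1}, v = x \in F(B)$ and your remark on why naive conjugation alone is insufficient are helpful elaborations, but the core argument is identical.
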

	
	\begin{proof}
		Define $\mu : A \to F$ by $\mu(a) = a^x$ for all  $a \in A$ and apply Lemma \ref{o8ytu1y2qwoiekdmwes7s}.
	\end{proof}
	
	\subsection{Prefundamental and Fundamental Words}
	\begin{notation}
		Let $X$ be a basis of $F$. If $w_i \in F$, we will sometimes write $w_{i,X}$ instead of $(w_i)_X$ for the reduction of $w_i$.
	\end{notation}
	\begin{definition}  \label {E2doiwAuxd12d7qf2ws7}
		Let $X$ be a basis of $F$. A set of words $W = \{w_1, \dots, w_r\} \subset F$ is \textit{$X$-prefundamental} if the following two conditions hold:
		\begin{enumerate}[\rm(1)]
			\item for each $x \in X \cup X^{-1}$, if $x$ occurs in $w_{j,X}$ for some $j$, then $x^{-1}$ occurs in $w_{k,X}$ for some $k$;
			\item no element of $X \cup X^{-1}$ occurs more than once across all words $w_{1,X}, w_{2,X}, \dots, w_{r,X}$. 
		\end{enumerate}
		If the set $W$ is $X$-prefundemental and 
		\begin{enumerate}
			\item[\rm(3)] each element $x \in X \cup X^{-1}$ occurs in some $w_{j,X}$ (where $j$ depends on $x$)
		\end{enumerate}
		we say that the set $W$ is \textit{$X$-fundamental}. When the set $W$ consists of a single word $w$, we will say that $w$ is $X$-prefundamental (or $X$-fundamental). Also, when we write \textit{$(X,w)$ is prefundamental (resp. fundamental)}, we mean that $X$ is a basis of $F$ and that $w \in F$ is $X$-prefundamental (resp. $X$-fundamental). 
	\end{definition}

	\begin{definition}
		Let $X$ be a basis of $F$, let $w \in F$ be an $X$-prefundamental element and let $w_X = (w_1, \dots, w_s)$.
		\begin{enumerate}
			
			\item A subset $E$ of $\{1,\dots,s\}$ is {\it $(X,w)$-admissible\/} if there exists $i \in \{1,2,\dots,s-3\}$ such that
			$E = \{ i, i+1, i+2, i+3 \}$ and $w_i w_{i+2} = 1 = w_{i+1} w_{i+3}$ (equivalently, $w_i w_{i+1} w_{i+2} w_{i+3} = [w_i^{-1},w_{i+1}^{-1}]$). Note that the $(X,w)$-admissible sets are pairwise disjoint. Informally, each $(X,w)$-admissible set is the set of indices of a commutator in the reduced word $w_X$. Let $C(X,w)$ be the union of all $(X,w)$-admissible sets.
			
			\item Define $\Delta(X,w) = \{1,\dots,s\} \setminus C(X,w)$.
			
			\item Define $L(X,w) = \big( \length_X(w) , |\Delta(X,w)| \big) \in \Nat^2$.
			
		\end{enumerate}
	\end{definition}
	
	\begin{nothing*}
		Let $\preceq$ be the lexicographic order on $\Nat^2$ and recall that $(\Nat^2,\preceq)$ is well-ordered.
		Given an $X$-prefundamental word $w$ and an $X'$-prefundamental word $w'$, we have $L(X',w') \prec L(X,w)$ if and only if one of the following holds:
		\begin{itemize}
			
			\item $\length_{X'}(w') < \length_X(w)$,
			\item $\length_{X'}(w') = \length_X(w)$ and $|\Delta(X',w')| < |\Delta(X,w)|$.
			
		\end{itemize}
	\end{nothing*}
	
	\begin{definition}
		Let $X$ be a basis of $F$ and $w \in F$, where $w_X = (w_1, \dots, w_s)$. We say that the pair \textit{$(i,j)$ satisfies $(\dagger)$} if 
		\begin{equation}  \tag{$\dagger$}
			\text{$1 < i < j < s$, \ $w_i w_{i+1} \cdots w_j$ is $X$-prefundamental  \ and \ $w_{i-1} w_{j+1} = 1$.}
		\end{equation}
		
	\end{definition}	
	
	\begin{lemma} \label{kncfll2kj3wiusdvvqask}
		Suppose $(X,w)$ is prefundamental, $w_X = (w_1, \dots, w_s)$, and $(i,j)$ satisfies $(\dagger)$. Let $A = \{w_i, \dots, w_j\} \cap X$, $B = X \setminus A$ and $x = w_{i-1}^{-1}$. Consider the basis $Y = A^x \cup B$ of $F$ given by Lemma \ref{jhuy5E1r27iuwehbd90}.
		Then $w$ is $Y$-prefundamental and $\length_{Y}(w) < \length_X(w)$.
		In particular, $L(Y,w) \prec L(X,w)$.
	\end{lemma}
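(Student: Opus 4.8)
The plan is to split the reduced word $w_X=(w_1,\dots,w_s)$ into three blocks,
\[ w \;=\; \underbrace{w_1\cdots w_{i-1}}_{P}\;\underbrace{w_i\cdots w_j}_{M}\;\underbrace{w_{j+1}\cdots w_s}_{Q}, \]
and to observe that rewriting $w$ in the basis $Y=A^x\cup B$ (with $x=w_{i-1}^{-1}$) amounts to conjugating the middle block $M$ by $x$ while not disturbing $P$ or $Q$. Since $(\dagger)$ gives $w_{j+1}=x$, we have $Px=w_1\cdots w_{i-2}$ and $x^{-1}Q=w_{j+2}\cdots w_s$, so
\[ w \;=\; (w_1\cdots w_{i-2})\,(x^{-1}Mx)\,(w_{j+2}\cdots w_s), \]
the last letter of $P$ and the first letter of $Q$ having been absorbed as the conjugating pair. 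I would then show that, once the conjugation in the middle factor is distributed over individual letters, this expression is the reduced $Y$-form of $w$, of length $s-2$, and is explicit enough to read off $Y$-prefundamentality.

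The structural fact to prove first is: \emph{for every $a\in A$, both $a$ and $a^{-1}$ occur in the tuple $(w_i,\dots,w_j)$ and at no other position of $w_X$.} Indeed $a\in A$ means $a=w_k$ for some $i\le k\le j$ with $w_k\in X$; since $w_i\cdots w_j$ is $X$-prefundamental, condition (1) of that property forces $a^{-1}$ to occur among $w_i,\dots,w_j$ as well, while condition (2) for the $X$-prefundamental word $w$ forbids repeated letters in $w_X$, so these are the only occurrences. Two consequences follow at once. First, $P$ and $Q$ contain no letter of $A\cup A^{-1}$, hence $P,Q\in F(B)$; in particular $w_{i-1}\in B\cup B^{-1}$, so $x\in F(B)$ and Corollary \ref{jhuy5E1r27iuwehbd90} indeed produces the basis $Y=A^x\cup B$ with $A^x\cap B=\emptyset$. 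Second, $M$ is a word in $A\cup A^{-1}$; writing $w_k=a_k^{\eta_k}$ with $a_k\in A$ and $\eta_k\in\{1,-1\}$ for $i\le k\le j$, we get $x^{-1}Mx=(a_i^x)^{\eta_i}\cdots(a_j^x)^{\eta_j}$, a word in $A^x\cup(A^x)^{-1}\subseteq Y\cup Y^{-1}$, because conjugation by $x$ is an automorphism of $F$ restricting to a bijection $A\cup A^{-1}\to A^x\cup(A^x)^{-1}$.

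Combining the above, $w=w_1\cdots w_{i-2}\,(a_i^x)^{\eta_i}\cdots(a_j^x)^{\eta_j}\,w_{j+2}\cdots w_s$, and I would check this presentation is reduced in $Y\cup Y^{-1}$: the two outer stretches are subwords of the reduced tuple $w_X$; the middle stretch is reduced since $w_kw_{k+1}\ne1$ forces $(a_k^x)^{\eta_k}(a_{k+1}^x)^{\eta_{k+1}}\ne1$ (apply the bijection); and no cancellation occurs at either seam, because there the adjacent letters lie in $B\cup B^{-1}$ and in $A^x\cup(A^x)^{-1}$, disjoint sets since $A^x$ and $B$ are disjoint parts of the basis $Y$. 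Hence $\length_Y(w)=s-2<s=\length_X(w)$, so $L(Y,w)\prec L(X,w)$ as the first coordinate of $L$ drops. It then remains to read $Y$-prefundamentality off this $w_Y$: condition (2) holds because the outer-block letters are distinct ($w_X$ has no repeats), the letters $(a_k^x)^{\eta_k}$ are distinct (the $w_k$, $i\le k\le j$, are, and conjugation is injective), and the two families sit in disjoint halves of $Y\cup Y^{-1}$; condition (1) holds because a letter $w_k\in B\cup B^{-1}$ of an outer block has $w_k^{-1}$ somewhere in $w_X$, necessarily again in an outer block (it cannot be among positions $i,\dots,j$, which carry only letters of $A\cup A^{-1}$, nor at position $i-1$ or $j+1$, where — using $w_{i-1}w_{j+1}=1$ — it would make $w_k$ a repeated letter), while a letter $(a_k^x)^{\eta_k}$ has $w_k^{-1}$ among $w_i,\dots,w_j$ by condition (1) for the $X$-prefundamental word $w_i\cdots w_j$, so its inverse appears in the middle block of $w_Y$.

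The main obstacle, I expect, is the combinatorial bookkeeping in the structural fact together with the reducedness check at the two seams: the argument rests entirely on $A$ occupying exactly the block $M$ (so conjugating $M$ by $x$ touches neither $P$ nor $Q$) and on $A^x\cap B=\emptyset$ (so no conjugated letter collides with an untouched one). With those two points in hand, the length bound and both prefundamentality conditions are routine verifications from the displayed form of $w_Y$.
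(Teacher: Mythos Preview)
Your proof is correct and follows essentially the same route as the paper: both rewrite $w$ as $w_1\cdots w_{i-2}\,w_i^x\cdots w_j^x\,w_{j+2}\cdots w_s$ and read off $Y$-prefundamentality and the length drop from this expression. You supply considerably more detail than the paper (verifying $x\in F(B)$ so that Corollary~\ref{jhuy5E1r27iuwehbd90} applies, checking reducedness at the seams to obtain $\length_Y(w)=s-2$ exactly rather than the paper's $\le s-2$, and spelling out both prefundamentality conditions explicitly), but the underlying idea is identical.
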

	
	\begin{proof} 
		The equalities
		\begin{multline*}
			w = w_1 \cdots w_{i-2} x^{-1} w_i w_{i+1} \cdots w_j x w_{j+2} \cdots w_s \\
			= w_1 \cdots w_{i-2} (w_i w_{i+1} \cdots w_j)^x w_{j+2} \cdots w_s
			= w_1 \cdots w_{i-2} w_i^x w_{i+1}^x \cdots w_j^x w_{j+2} \cdots w_s
		\end{multline*}
		together with the fact that $(w_k^x)^{-1} = (w_k^{-1})^x$ show that $w$ is $Y$-prefundamental and that  $\length_{Y}(w) \le s-2 = \length_X(w)-2$.
	\end{proof}
	
	\begin{definition}  \label {kjbh82ta54qvbspo}
		Suppose $(X,w)$ is prefundamental and $w_X = (w_1, \dots, w_s)$. A {\it good triple of $(X,w)$}, is a triple $(i,j,k) \in (\Nat^+)^3$ satisfying the following three conditions:
		\begin{enumerate}[\rm(i)]
			\item $1 < i < j < k \leq s$,
			\item $w_{1} w_{j} = 1 = w_i w_k$,
			\item $\{1,i,j,k\} \subseteq \Delta(X,w)$.
		\end{enumerate} 
	\end{definition}
	
	\begin{remark}\label{goodtripleRemark}
		Suppose $(i,j,k)$ satisfies (i) and (ii). If $\{1,i,j,k\} \subset C(X,w)$, then $(1,i,j,k) = (1,2,3,4)$. Otherwise $\{1,i,j,k\} \cap \Delta(X,w) \neq \emptyset$ and it can be checked that $\{1,i,j,k\} \subseteq \Delta(X,w)$ and hence $(i,j,k)$ is a good triple of $(X,w)$.
	\end{remark}	
	
	\begin{proposition}\label{firstCommutator}
		Let $X = \{x_1, \dots, x_n\}$ be a basis of $F$, suppose $w \in F$ is $X$-prefundamental and assume $(X,w)$ has a good triple $(i,j,k)$.
		Let $w_X =  (w_1, \dots, w_s)$ and write $w = w_1 R w_i S w_j T w_k U$, where $R = \prod_{1<\nu<i} w_\nu$,  $S = \prod_{i<\nu<j} w_\nu$,  $T = \prod_{j<\nu<k} w_\nu$ and  $U = \prod_{k<\nu\le s} w_\nu$. Let $\alpha, \beta$ be such that $\{w_1,w_j\} = \{x_\alpha, x_\alpha^{-1}\}$ and $\{w_i,w_k\} = \{x_\beta, x_\beta^{-1}\}$.
		Let $y_1 = TS w_1^{-1}$, let $y_2 = T w_i^{-1}(TSR)^{-1}$ and let $Y = \{ y_1, y_2\} \cup (X \setminus \{x_\alpha,x_\beta\})$. Then the following hold:
		\begin{enumerate}[\rm(a)]
			\item $Y$ is a basis of $F$, 
			\item $w = [y_1,y_2] TSRU$,
			\item $w$ is $Y$-prefundamental,
			\item $L(Y,w) \prec L(X,w)$.
		\end{enumerate}
	\end{proposition}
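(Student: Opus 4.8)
The plan is to prove the four assertions in the order (a), (b), (c), (d). Throughout, set $a = w_1$ and $b = w_i$, so condition (ii) of a good triple says $w_j = a^{-1}$ and $w_k = b^{-1}$; by prefundamental condition (2) the letters $a^{\pm1}$ occur in $w_X$ only at positions $1$ and $j$, and $b^{\pm1}$ only at positions $i$ and $k$. In particular $x_\alpha \ne x_\beta$ (otherwise a letter of $X \cup X^{-1}$ would repeat in $w_X$), so with $A = \{x_\alpha,x_\beta\}$ and $B = X \setminus A$ the family $\{A,B\}$ is a genuine partition of $X$; moreover $R,S,T,U \in F(B)$, since each is a product of letters $w_\nu$ with $\nu \notin \{1,i,j,k\}$, all of which lie in $B \cup B^{-1}$.

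For (a), I would apply Lemma \ref{o8ytu1y2qwoiekdmwes7s} to the partition $\{A,B\}$ and the map $\mu : A \to F$ given by $\mu(x_\alpha) = y_1$, $\mu(x_\beta) = y_2$: since $w_1^{-1} \in \{x_\alpha, x_\alpha^{-1}\}$ and $TS \in F(B)$, we may write $y_1 = (TS)\,x_\alpha^{\varepsilon_1}\cdot 1$ for a suitable $\varepsilon_1 \in \{1,-1\}$, and likewise $y_2 = T\,x_\beta^{\varepsilon_2}\,(TSR)^{-1}$ with $T,(TSR)^{-1} \in F(B)$; so $\mu$ meets the hypothesis of that lemma, which then yields that $\mu$ is injective, $\mu(A) \cap B = \emptyset$, and $Y = \mu(A) \cup B$ is a basis of $F$. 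Assertion (b) is a short computation: from $y_1^{-1} = aS^{-1}T^{-1}$ and $y_2^{-1} = TSR\,b\,T^{-1}$, multiplying out and cancelling the internal $T^{\pm1}$ and $S^{\pm1}$ factors in sequence gives $[y_1,y_2] = aRbSa^{-1}Tb^{-1}R^{-1}S^{-1}T^{-1}$, hence $[y_1,y_2]\,TSRU = aRbSa^{-1}Tb^{-1}U = w_1Rw_iSw_jTw_kU = w$.

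For (c), assertion (b) presents $w$ as the word $y_1^{-1}y_2^{-1}y_1y_2$ (reduced in $Y \cup Y^{-1}$, as $y_1,y_2$ are distinct basis elements) followed by $TSRU \in F(B)$. The reduced form $P := (TSRU)_Y$ is a word in $B \cup B^{-1}$, a set disjoint from $\{y_1^{\pm1},y_2^{\pm1}\}$, so no cancellation occurs at the junction and $w_Y$ equals $(y_1^{-1},y_2^{-1},y_1,y_2)$ followed by $P$. The two prefundamental conditions for $w_Y$ then follow: each of $y_1^{\pm1},y_2^{\pm1}$ occurs exactly once in $w_Y$, only among its first four letters; and for $b' \in B \cup B^{-1}$, $w_X$ being $X$-prefundamental forces $b'$ and $(b')^{-1}$ to occur at most once each among the letters of $TSRU$, while in a reduction a letter disappears precisely when its unique inverse partner does — so in $P$ each such $b'$ occurs at most once, and occurs if and only if $(b')^{-1}$ does.

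Finally (d). Put $\ell = \length_X(w)$. As $T,S,R,U$ are reduced words in $B \cup B^{-1}$ of combined length $\ell - 4$, we get $\length_Y(w) = 4 + \length_Y(P) \le 4 + (\ell-4) = \ell$; if this is strict, then $L(Y,w) \prec L(X,w)$ because the first coordinates already differ, and we are done. The main obstacle is the equality case $\length_Y(w) = \ell$, in which $TSRU$ is already reduced and $P$ is exactly the concatenation of the letter-strings of $T,S,R,U$. Here: $\{1,2,3,4\}$ is $(Y,w)$-admissible, since $y_1^{-1}y_1 = 1 = y_2^{-1}y_2$; and by condition (iii), $\{1,i,j,k\} \subseteq \Delta(X,w)$, so no $(X,w)$-admissible set meets $\{1,i,j,k\}$, and being a block of four consecutive positions each such set lies entirely inside one of the four intervals of positions occupied by $R$, $S$, $T$, $U$. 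Reordering $RSTU$ into $TSRU$ merely shuffles those four intervals while preserving the order of letters inside each, so every $(X,w)$-admissible set is carried to a block of four consecutive positions of $w_Y$ bearing the same four letters — an $(Y,w)$-admissible set — and these images are pairwise disjoint and disjoint from $\{1,2,3,4\}$. Hence $|C(Y,w)| \ge |C(X,w)| + 4$, so $|\Delta(Y,w)| = \ell - |C(Y,w)| \le \ell - |C(X,w)| - 4 < |\Delta(X,w)|$, and $L(Y,w) \prec L(X,w)$ via the second coordinates. I expect the bookkeeping of this last case — verifying that admissible sets never straddle the removed positions and that the reindexing preserves commutators — to be the only genuinely delicate point; everything else is a direct computation or an appeal to Lemma \ref{o8ytu1y2qwoiekdmwes7s} and the definitions.
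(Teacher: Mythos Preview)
Your proof is correct and follows essentially the same route as the paper's: Lemma \ref{o8ytu1y2qwoiekdmwes7s} for (a), direct expansion of $[y_1,y_2]$ for (b), the prefundamental conditions carried through the basis change for (c), and for (d) the same case split on whether the concatenation $(y_1^{-1},y_2^{-1},y_1,y_2)T_XS_XR_XU_X$ is already reduced, with the key observation that $\{1,i,j,k\} \subseteq \Delta(X,w)$ forces every $(X,w)$-admissible set into one of the $R,S,T,U$ blocks so that the block shuffle preserves it. The only cosmetic difference is that the paper writes down the explicit index shifts $\sigma_R,\sigma_S,\sigma_T,\sigma_U$ and obtains the equality $|C(Y,w)| = |C(X,w)| + 4$, whereas you argue the inequality $|C(Y,w)| \ge |C(X,w)| + 4$, which is all that is needed.
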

	\begin{proof}
		Since $TS$, $T$ and $(TSR)^{-1}$ are words in $X \setminus \{x_\alpha, x_\beta\}$, part (a) follows from Lemma \ref{o8ytu1y2qwoiekdmwes7s}. Part (b) follows by algebraic manipulation. Part (c) follows from the fact that $w$ is $X$-prefundamental together with the definition of $Y$. For part (d), let 
		$$
		H = (h_1, \dots, h_s) = (y_1^{-1}, y_2^{-1}, y_1, y_2) T_X S_X R_X U_X
		$$
		denote the concatenation of the tuples $(y_1^{-1}, y_2^{-1}, y_1, y_2)$, $T_X$, $S_X$, $R_X$ and $U_X$. Then $H \in (Y \cup Y^{-1})^s$ and by (b), $w_Y$ is the reduction of $H$. If $H$ is not reduced, $\length_Y(w) < \length_X(w)$ and so $L(Y,w) \prec L(X,w)$ and the proof is complete. Thus, we assume from now on that $H$ is reduced so that $w_Y = H$ and  $\length_Y(w) = s = \length_X(w)$. We must show that $|\Delta(Y,w)| < |\Delta(X,w)|$.
		
		Consider the intervals
		$$
		\begin{array}{ccccccc}
			I_R = (1,i) & \quad & I_S = (i,j) & \quad & I_T = (j,k) & \quad & I_U = (k,s] \\
			J_R = (4,i+3) & \quad & J_S = (i+2,j+2) & \quad & J_T = (j+1,k+1) & \quad & J_U = (k,s]
		\end{array}
		$$
		and the bijections
		$$
		\begin{array}{c} \sigma_R : I_R \to J_R \\ \nu \mapsto \nu+3 \end{array} \quad
		\begin{array}{c} \sigma_S : I_S \to J_S \\ \nu \mapsto \nu+2 \end{array} \quad
		\begin{array}{c} \sigma_T : I_T \to J_T \\ \nu \mapsto \nu+1 \end{array} \quad
		\begin{array}{c} \sigma_U : I_U \to J_U \\ \nu \mapsto \nu . \end{array}
		$$
		Since $(i,j,k)$ is a good triple, $\{1,i,j,k\} \subseteq \Delta(X,w)$ which implies that
		each $(X,w)$-admissible set is included in one of the intervals $I_R, I_S, I_T, I_U$.
		It follows that, for each $V \in \{R,S,T,U\}$, $\sigma_V : I_V \to J_V$ restricts to a bijection from $C(X,w) \cap I_V$ to $C(Y,w) \cap J_V$.
		Consequently, $|C(X,w)| = | C(Y,w) \cap \{5,6, \dots, s\}|$.
		Since $(h_1,h_2,h_3,h_4) = (y_1^{-1}, y_2^{-1}, y_1, y_2)$, $\{1,2,3,4\}$ is a $(Y,w)$-admissible set
		and consequently $C(Y,w) = \{1,2,3,4\} \cup  (C(Y,w) \cap \{5,6, \dots, s\})$.
		So $|C(Y,w)| = |C(X,w)|+4$ and hence $|\Delta(Y,w)| = |\Delta(X,w)| - 4$. 
		Thus, $L(Y,w) \prec L(X,w)$.
	\end{proof}
	
	\begin{definition}\label{defRotation}
		Let $X$ be a basis of $F$, $w \in F$, and $w_X = (w_1,\dots,w_s)$.
		An {\it $X$-rotation\/} of $w$ is an element $w' \in F$ for which there exists $i \in \{1,\dots,s\}$ satisfying
		$w' = (w_1 \cdots w_{i-1})^{-1} w (w_1 \cdots w_{i-1})$.
		Note that if this is the case then $w' = w_i w_{i+1} \cdots w_s w_1 w_2 \cdots w_{i-1}$, which implies that
		$$
		\text{$w'_X$ is the reduction of $w' = (w_i, w_{i+1}, \dots, w_s, w_1, w_2, \dots, w_{i-1})$.}
		$$
		If $w_s w_1 \neq 1$ (equivalently $w_1 w_s \neq1$) then $w'$ is reduced and $w'_X = w'$. If $i=1$ then $w' = w$ and we say that $w'$ is the {\it trivial $X$-rotation\/} of $w$.
	\end{definition}
	
	\begin{remark}
		Let $X$ be a basis of $F$, $w \in F$ and $w_X = (w_1,\dots,w_s)$.
		\begin{enumerate}[\rm(a)]
			
			\item If $w_1 w_s \neq1$ then every $X$-rotation $w'$ of $w$ satisfies $\length_X(w') = \length_X(w)$.
			If $w_1 w_s=1$ then every non-trivial $X$-rotation $w'$ of $w$ satisfies $\length_X(w') < \length_X(w)$ (so in this case $w$ is not an $X$-rotation of $w'$).
			
			\item Suppose $w'$ is an $X$-rotation of $w$. Then the normal subgroup of $F$ generated by $w$ equals the normal subgroup generated by $w'$ and hence $\langle X \mid w \rangle = \langle X \mid w' \rangle$.
			
			\item If $w$ is $X$-prefundamental then so is every $X$-rotation of $w$.
			
		\end{enumerate}
	\end{remark}
	
	\begin{lemma}  \label {7oijbvcf8iBcq4wvtsxA}
		Let $(X,w)$ be prefundamental with $w_X = (w_1, \dots, w_s)$.
		Let $Q(X,w)$ be the set of all $(h,i,j,k) \in \Nat^4$ such that $1 \le h < i < j < k \le s$ and $w_h w_j = 1 = w_i w_k$.
		\begin{enumerate}[\rm(a)]
			
			\item If $w \neq 1$ then $Q(X,w) \neq \emptyset$.
			
		\end{enumerate}
		Assume $|\Delta(X,w)|>0$ and that no pair $(i,j)$ satisfies $(\dagger)$. Then, 
		\begin{enumerate}
			
			\item[\rm(b)] $Q(X,w) \cap \Delta(X,w)^4 \neq \emptyset$.
			
			\item[\rm(c)] Some $X$-rotation $w'$ of $w$ has the following properties:
			\begin{enumerate}[\rm(i)]
				
				\item $(X,w')$ is prefundamental,
				
				\item $L(X,w') \preceq L(X,w)$,
				
				\item $(X,w')$ has a good triple.
				
			\end{enumerate}
		\end{enumerate}
	\end{lemma}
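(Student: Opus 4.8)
The plan is to prove (a), (b) and (c) in turn, working throughout with the fixed‑point‑free involution (perfect matching) $M$ on $\{1,\dots,s\}$ attached to $w$: since every letter of $w_X$ occurs exactly once and, by prefundamentality, its inverse also occurs exactly once, pairing each index with the index of the inverse letter defines $M$, and because $w_X$ is reduced no two matched indices are adjacent; in particular $s$ is even and $s\ge 4$ whenever $w\neq 1$. Under this dictionary an element of $Q(X,w)$ is precisely a crossing $h<i<j<k$ of $M$ (the matched pairs $\{h,j\}$ and $\{i,k\}$ interleave), and the admissible sets of $(X,w)$ are exactly the blocks of four consecutive indices carrying two interleaving matched pairs.

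For (a) I would show that any perfect matching with no adjacent matched pair has a crossing: pick the matched pair $\{a,b\}$ with $a<b$ and $b-a$ minimal; since $b>a+1$ the index $a+1$ lies in $(a,b)$, and if its partner $d$ satisfies $d<a$ or $d>b$ then $\{a+1,d\}$ crosses $\{a,b\}$, whereas $a+1<d<b$ makes $\{a+1,d\}$ a strictly shorter matched pair, contradicting minimality. For (b) I would first note that $\Delta(X,w)$ is closed under $M$ (within an admissible $4$‑block both matched pairs stay inside the block), so $M$ restricts to a perfect matching $M_\Delta$ on $\Delta(X,w)$. If $M_\Delta$ had no crossing, then choosing among $M_\Delta$‑arcs one $\{a,b\}$ with $\Delta(X,w)\cap(a,b)$ of minimal size and arguing as above shows $\Delta(X,w)\cap(a,b)=\emptyset$, so $(a,b)\subseteq C(X,w)$; since $a,b$ lie in no admissible set while admissible sets are disjoint intervals, $(a,b)$ is a nonempty union of complete admissible sets, hence $w_{a+1}\cdots w_{b-1}$ is a product of commutators $[w_c^{-1},w_{c+1}^{-1}]$ and so $X$‑prefundamental. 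Then $(a+1,b-1)$ satisfies $(\dagger)$, contradicting the hypothesis; therefore $M_\Delta$ has a crossing, which is an element of $Q(X,w)\cap\Delta(X,w)^4$.

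For (c), the step I expect to need the most care is a preliminary observation: the hypothesis that no pair satisfies $(\dagger)$ already forces $w_1w_s\neq 1$. Indeed, if $w_1w_s=1$ then $s\ge 4$ (since $|\Delta(X,w)|>0$), and $(2,s-1)$ satisfies $(\dagger)$, because $w_2\cdots w_{s-1}$ is reduced and, $w_1$ and $w_s$ being the only occurrences of their (mutually inverse) letters, every letter of $w_2\cdots w_{s-1}$ still has its inverse among $w_2,\dots,w_{s-1}$, so $w_2\cdots w_{s-1}$ is $X$‑prefundamental. The point is that this makes the seam of every $X$‑rotation of $w$ reduced, so all $X$‑rotations of $w$ have the same $\length_X$ as $w$.

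Granting this, I would complete (c) as follows. By (b) there is a crossing $(h,i,j,k)\in Q(X,w)\cap\Delta(X,w)^4$. If $h=1$ then $(i,j,k)$ is already a good triple of $(X,w)$ and $w'=w$ works; otherwise let $w'$ be the $X$‑rotation of $w$ by $h$, so $w'_X=(w_h,\dots,w_s,w_1,\dots,w_{h-1})$. Then $w'$ is $X$‑prefundamental (an $X$‑rotation of a prefundamental word is prefundamental); the only adjacency lost in passing from $w$ to $w'$ is $(w_{h-1},w_h)$, and since $h\in\Delta(X,w)$ lies in no admissible set, each admissible set of $(X,w)$ retains all its internal adjacencies and survives as an admissible set of $(X,w')$, whence $|C(X,w')|\ge|C(X,w)|$ and, $\length_X$ being preserved, $L(X,w')\preceq L(X,w)$. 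Finally, putting $i'=i-h+1$, $j'=j-h+1$, $k'=k-h+1$, we have $1<i'<j'<k'\le s$ and $w'_1w'_{j'}=w_hw_j=1=w_iw_k=w'_{i'}w'_{k'}$, so $(i',j',k')$ satisfies conditions (i) and (ii) of a good triple of $(X,w')$; by Remark \ref{goodtripleRemark} it is a good triple unless $(1,i',j',k')=(1,2,3,4)$, but that would make $\{h,h+1,h+2,h+3\}$ an admissible set of $(X,w)$, contradicting $h\in\Delta(X,w)$. Hence $(X,w')$ has a good triple, and $w'$ has all three required properties.
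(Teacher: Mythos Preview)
Your proof is correct and follows essentially the same route as the paper's: both arguments hinge on the involution $p\mapsto M(p)$ pairing each index with the index of the inverse letter, use a minimality argument to produce a crossing for (a), restrict attention to $\Delta(X,w)$ for (b), and for (c) rotate so that the $h$ of a $\Delta$-crossing lands at position $1$ and invoke Remark~\ref{goodtripleRemark}. The only differences worth noting are cosmetic: for (b) the paper compresses $w$ to the shorter prefundamental word $v=w_{\nu_1}\cdots w_{\nu_d}$ supported on $\Delta(X,w)$ and re-applies (a), whereas you rerun the minimality argument directly on $M|_{\Delta(X,w)}$; and your justification that $w_1w_s\neq 1$ (via $(2,s-1)$ satisfying $(\dagger)$) is actually more careful than the paper's one-line appeal to ``$(1,s)$ does not satisfy $(\dagger)$''.
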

	
	\begin{proof}
		(a) Since $w \neq1$ is $X$-prefundamental, we have $s \geq 4$ and $J \neq \emptyset$, where we define $J$ to be the set of all $j \in \{1,\dots,s\}$ satisfying:
		$$
		\text{there exists $h \in \{1,\dots,s\}$ such that $h<j$ and $w_h w_j = 1$.}
		$$
		Let $j = \min J$ and let $h$ be such that $w_h w_j = 1$ (so $1 \le h < j$).
		Since $w_X$ is reduced, we have $h<j-1$ and so we can choose an integer $i$ such that $h < i < j$.
		Let $k$ be the unique element of $\{1,\dots,s\}$ such that $w_i w_k = 1 = w_k w_i$.
		We must have $k>j$, otherwise we would have $\max(i,k)<j$ and $\max(i,k) \in J$, which would contradict our choice of $j$. So $(h,i,j,k) \in Q(X,w)$, proving (a).
		
		From now-on, assume that $|\Delta(X,w)|>0$ and that no pair $(i,j)$ satisfies condition $(\dagger)$.
		
		For (b), let $\nu_1 < \dots < \nu_d$ be the elements of the nonempty set $\Delta(X,w)$ and define $T = (w_{\nu_1}, \dots, w_{\nu_d})$. We claim $T$ is reduced.
		Indeed, assume the contrary. Then $w_{\nu_\ell} w_{\nu_{\ell +1}} = 1$ for some $\ell $ such that $1 \le \ell  < d$.
		Since $w_X$ is reduced, we have $\nu_{\ell +1} - \nu_\ell  > 1$. Defining $i = \nu_\ell+1$ and $j = \nu_{\ell +1} - 1$, we have that $\emptyset \neq \{i,i+1, \dots, j\} \subseteq C(X,w)$ and $i-1, j+1 \notin C(X,w)$. This implies that the pair $(i,j)$ satisfies $(\dagger)$, contradicting our hypothesis.
		So $T$ is reduced. Consequently, the element $v = w_{\nu_1} \cdots w_{\nu_d} \in F$ is such that  $v_X = T = (w_{\nu_1}, \dots, w_{\nu_d})$.
		So $v$ is $X$-prefundamental. We also have $v \neq 1$, because $\length_X(v) = d = |\Delta(X,w)| > 0$.
		Part (a) implies that $Q(X,v) \neq \emptyset$.
		If $(h,i,j,k) \in Q(X,v)$ then $(\nu_h, \nu_i, \nu_j, \nu_k) \in Q(X,w) \cap \Delta(X,w)^4$, proving~(b).
		
		We prove (c). By (b), we can choose $(h,i,j,k) \in Q(X,w) \cap \Delta(X,w)^4$.
		Consider the $X$-rotation $w' = (w_1 \cdots w_{h-1})^{-1} w (w_1 \cdots w_{h-1})$ of $w$ (if $h=1$ then $w'=w$).
		Since the pair $(1,s)$ does not satisfy $(\dagger)$, we have $w_s w_1 \neq 1$ and consequently 
		$w'_X = (w_h, w_{h+1}, \dots, w_s, w_1,w_2, \dots, w_{h-1})$ and $\length_X(w') = s = \length(w)$.
		It is clear that $w'$ is $X$-prefundamental, so (i) is true.
		
		To prove (ii), consider an $(X,w)$-admissible set $E = \{\nu,\nu+1,\nu+2,\nu+3\}$. Since $h \in \Delta(X,w)$, either $h < \nu$ or $\nu+3<h$.
		It is not hard to see that if $h < \nu$ (resp.\ $\nu+3<h$) then $E - (h-1)$ (resp.\ $E + (s-h+1)$) is an $(X,w')$-admissible set.
		From this, we see that $| C(X,w) | \leq | C(X,w') |$ and hence $|\Delta(X,w')| \leq |\Delta(X,w)|$.
		Since $\length_X(w') = \length(w)$, we have $L(X,w') \preceq L(X,w)$ and (ii) holds.
		
		Define $(i',j',k') = (i-(h-1), j-(h-1), k-(h-1))$.
		Then $(1, i', j', k') \in Q(X,w')$ and $1 \in \Delta(X,w')$, so by Remark \ref{goodtripleRemark}
		$(i', j', k')$ is a good triple of $(X,w')$ and (iii) holds.
	\end{proof}

	\begin{corollary}  \label {876t12n8vcrmlp3skdhrfbgy}
		Suppose $(X,w)$ is prefundamental, $|\Delta(X,w)|>0$ and no pair $(i,j)$ satisfies condition $(\dagger)$. Then there exist an $X$-rotation $w'$ of $w$ and a basis $Y$ of $F$ such that $(X,w')$ and $(Y,w')$ are prefundamental and $L(Y,w') \prec L(X,w') \preceq L(X,w)$.
	\end{corollary}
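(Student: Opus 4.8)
The plan is to simply chain together the two main results that precede this corollary, namely Lemma~\ref{7oijbvcf8iBcq4wvtsxA} (which produces a rotation with a good triple without increasing $L$) and Proposition~\ref{firstCommutator} (which uses a good triple to strictly decrease $L$ while passing to a new basis). There is essentially no independent argument to make; the content is entirely in verifying that the output of the Lemma is a legitimate input for the Proposition.

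First I would record the trivial observation that $|\Delta(X,w)| > 0$ forces $w \neq 1$: indeed $\Delta(X,w) \subseteq \{1,\dots,s\}$ where $s = \length_X(w)$, so a nonempty $\Delta(X,w)$ gives $s \geq 1$. (In fact prefundamentality then gives $s \geq 4$, but we don't need this.) This disposes of the only degenerate case.

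Next, I would invoke Lemma~\ref{7oijbvcf8iBcq4wvtsxA}(c): since $|\Delta(X,w)| > 0$ and no pair $(i,j)$ satisfies $(\dagger)$, there is an $X$-rotation $w'$ of $w$ such that $(X,w')$ is prefundamental, $L(X,w') \preceq L(X,w)$, and $(X,w')$ has a good triple $(i,j,k)$. Now apply Proposition~\ref{firstCommutator} with the prefundamental pair $(X,w')$ in place of $(X,w)$ and the good triple $(i,j,k)$: this yields a basis $Y$ of $F$ such that $w'$ is $Y$-prefundamental and $L(Y,w') \prec L(X,w')$. Combining the two inequalities gives $L(Y,w') \prec L(X,w') \preceq L(X,w)$, and we have exhibited the required $w'$ and $Y$ together with the prefundamentality of $(X,w')$ and $(Y,w')$.

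There is no real obstacle here; if anything the only point worth stating explicitly is that Proposition~\ref{firstCommutator} is applied to $w'$ rather than to $w$, which is legitimate precisely because Lemma~\ref{7oijbvcf8iBcq4wvtsxA}(c) guarantees that $(X,w')$ is prefundamental and has a good triple — exactly the hypotheses of the Proposition. The corollary is thus immediate.
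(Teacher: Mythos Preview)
Your proof is correct and follows essentially the same approach as the paper: invoke Lemma~\ref{7oijbvcf8iBcq4wvtsxA}(c) to obtain an $X$-rotation $w'$ with $(X,w')$ prefundamental, $L(X,w') \preceq L(X,w)$, and a good triple, then feed $(X,w')$ into Proposition~\ref{firstCommutator} to get the basis $Y$ with $L(Y,w') \prec L(X,w')$. The only difference is your additional remark that $|\Delta(X,w)|>0$ forces $w\neq 1$, which is harmless but not actually needed since part~(c) of the Lemma already takes $|\Delta(X,w)|>0$ (rather than $w\neq1$) as its hypothesis.
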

	
	\begin{proof}
		Lemma \ref{7oijbvcf8iBcq4wvtsxA}(c) asserts that there exists an $X$-rotation $w'$ of $w$ such that
		$$
		\text{$(X,w')$ is prefundamental, \quad  $L(X,w') \preceq L(X,w)$ \quad  and \quad $(X,w')$ has a good triple.}
		$$
		Applying Proposition \ref{firstCommutator} to $(X,w')$ shows that there exists a basis $Y$ of $F$ such that $(Y,w')$ is prefundamental and $L(Y,w') \prec L(X,w')$.
		The conclusion follows.
	\end{proof}

	\begin{proposition}\label{fundamentalProposition}
		Let $X = \{x_1,\dots,x_n\}$ be a basis of $F$ and $w$ an $X$-prefundamental element of $F$.
		There exists a nonnegative integer $g \le n/2$ such that
		$$
		\textstyle
		\langle x_1, \dots, x_n \mid w \rangle \isom
		\langle a_1, b_1, a_2, b_2,  \dots, a_g, b_g \mid \prod_{i=1}^g [a_i,b_i] \rangle \star F_r \, ,
		$$
		where $r = n-2g$ and $F_r$ is a free group on $r$ letters.
	\end{proposition}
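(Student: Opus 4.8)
The plan is to proceed by induction on the invariant $L(X,w)$ with respect to the lexicographic well-ordering $\preceq$ on $\Nat^2$. The structural facts I would lean on are that, for any basis $X$ of the ambient free group $F$ and any $w\in F$, the group $\langle X\mid w\rangle$ is canonically $F/N(w)$, where $N(w)$ is the normal closure of $w$ — so $\langle X\mid w\rangle\isom\langle Y\mid w\rangle$ for any two bases $X,Y$ of $F$ — and that $\langle X\mid w\rangle=\langle X\mid w'\rangle$ whenever $w'$ is an $X$-rotation of $w$ (the Remark after Definition \ref{defRotation}). Thus the whole argument amounts to repeatedly replacing $(X,w)$ by a pair of strictly smaller $L$-value while keeping the presented group fixed, until $w$ reaches a normal form from which the conclusion is immediate.

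The base case is $|\Delta(X,w)|=0$. Then $C(X,w)=\{1,\dots,s\}$ is a disjoint union of $(X,w)$-admissible blocks, each consisting of four consecutive indices; since the blocks are pairwise disjoint and cover $\{1,\dots,s\}$, this forces $s=4g$ for some $g\ge 0$ with blocks exactly $\{1,2,3,4\},\{5,6,7,8\},\dots,\{s-3,\dots,s\}$. For the $\ell$-th block one has $w_{4\ell-1}=w_{4\ell-3}^{-1}$ and $w_{4\ell}=w_{4\ell-2}^{-1}$, hence $w_{4\ell-3}w_{4\ell-2}w_{4\ell-1}w_{4\ell}=[\,w_{4\ell-3}^{-1},w_{4\ell-2}^{-1}\,]$, so $w=\prod_{\ell=1}^{g}[\,w_{4\ell-3}^{-1},w_{4\ell-2}^{-1}\,]$ in $F$. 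I would then use that $w_X$ is reduced and that $w$ is $X$-prefundamental to check that the $2g$ letters $w_1,w_2,w_5,w_6,\dots$ are distinct elements of $X\cup X^{-1}$ built from $2g$ distinct indices, so $2g\le n$; the set $Y$ obtained from $X$ by inverting those of its members occurring in $w$ is therefore again a basis of $F$. With $A\subset Y$ the $2g$ letters occurring in $w$ and $B=Y\setminus A$, we have $w\in F(A)$ in commutator form, so Lemma \ref{lkjDnCbvfsdf4rr8BqA2ckjhxgfz} (taking $W=\{w\}$ and $V=\emptyset$) gives $\langle X\mid w\rangle\isom\langle Y\mid w\rangle\isom\langle A\mid w\rangle\star F(B)\isom\langle a_1,b_1,\dots,a_g,b_g\mid\prod_{i=1}^{g}[a_i,b_i]\rangle\star F_r$ with $r=n-2g$.

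For the inductive step, suppose $|\Delta(X,w)|>0$, so in particular $w\ne 1$, and assume the statement for every prefundamental pair of $L$-value $\prec L(X,w)$. If some pair $(i,j)$ satisfies $(\dagger)$, then Lemma \ref{kncfll2kj3wiusdvvqask} yields a basis $Y$ of $F$ for which $w$ is $Y$-prefundamental with $L(Y,w)\prec L(X,w)$; applying the induction hypothesis to $(Y,w)$ and using $\langle X\mid w\rangle\isom\langle Y\mid w\rangle$ completes this case. If no pair $(i,j)$ satisfies $(\dagger)$, then Corollary \ref{876t12n8vcrmlp3skdhrfbgy} yields an $X$-rotation $w'$ of $w$ and a basis $Y$ of $F$ with $(Y,w')$ prefundamental and $L(Y,w')\prec L(X,w')\preceq L(X,w)$, hence $L(Y,w')\prec L(X,w)$; applying the induction hypothesis to $(Y,w')$ and using $\langle X\mid w\rangle=\langle X\mid w'\rangle\isom\langle Y\mid w'\rangle$ completes the case. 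Since $F$ has rank $n$ throughout, the bound $g\le n/2$ and the equality $r=n-2g$ are inherited unchanged.

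The inductive step is essentially routing: all the genuine lowering of $L$ has already been packaged into Lemma \ref{kncfll2kj3wiusdvvqask}, Proposition \ref{firstCommutator} and Corollary \ref{876t12n8vcrmlp3skdhrfbgy}. The part that needs real care is the base case — verifying that $|\Delta(X,w)|=0$ forces the rigid block decomposition of $w_X$ (so that $w$ is \emph{literally} a product of commutators), and then the index bookkeeping showing the $2g$ letters appearing in $w$ are distinct basis letters up to inversion, which is exactly what makes $A$ and $B$ disjoint so that Lemma \ref{lkjDnCbvfsdf4rr8BqA2ckjhxgfz} applies.
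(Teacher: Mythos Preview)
Your proposal is correct and follows essentially the same approach as the paper's proof. The only cosmetic difference is that the paper packages the descent as ``choose a prefundamental pair of minimal $L$-value in its equivalence class and derive a contradiction if $|\Delta|>0$'', whereas you phrase it as well-founded induction on $L(X,w)$; both arguments invoke Lemma~\ref{kncfll2kj3wiusdvvqask} and Corollary~\ref{876t12n8vcrmlp3skdhrfbgy} in exactly the same places, and the treatment of the $|\Delta(X,w)|=0$ case (forcing the block decomposition $\{1,2,3,4\},\{5,6,7,8\},\dots$, extracting the $2g$ basis letters, and applying Lemma~\ref{lkjDnCbvfsdf4rr8BqA2ckjhxgfz}) is the same, with your version slightly more explicit about why the $2g$ letters are distinct.
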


	\begin{proof}
		We define an equivalence relation $\sim$ on the set of prefundamental pairs. We declare that for prefundamental $(X,w), (X',w')$, 
		we have $(X,w) \sim (X',w')$ if and only if $\lb X \mid w \rb \isom \lb X' \mid w'\rb$ (recalling that $X,X'$ are bases of $F$). Observe that if $(X_1,w_1), (X_2,w_2)$ are prefundamental, then the following hold:
		\begin{itemize}
			
			\item[(i)] if $w_1=w_2$ then $(X_1,w_1) \sim (X_2,w_2)$;
			\item[(ii)] if $X_1=X_2$ and $w_2$ is an $X_1$-rotation of $w_1$ then $(X_1,w_1) \sim (X_2,w_2)$.
			
		\end{itemize}
		Let us prove:
		\begin{equation} \label {lkjfkj2iq2ywgsxvoqw}
			\text{each equivalence class contains an element $(X,w)$ that satisfies $|\Delta(X,w)| = 0$.}
		\end{equation}
		Fix an equivalence class $\Ceul$. Since $(\Nat^2,\preceq)$ is well-ordered, the set $\setspec{ L(X,w) }{ (X,w) \in \Ceul }$ has a minimum element $(s,d)$. Choose $(X,w) \in \Ceul$ such that $L(X,w) = (s,d)$. We claim that $d=|\Delta(X,w)| = 0$.
		
		Proceeding by contradiction, we assume that $|\Delta(X,w)|>0$. If there exists $(i,j) \in (\Nat^+)^2$ satisfying $(\dagger)$, then Lemma \ref{kncfll2kj3wiusdvvqask} implies that there exists a basis $Y$ of $F$ such that $(Y,w)$ is prefundamental and $L(Y,w) \prec L(X,w)$. Since $(Y,w) \sim (X,w)$ by (i), this contradicts the minimality of $L(X,w)$. It follows that no pair $(i,j)$ satisfies $(\dagger)$.
		Since $|\Delta(X,w)|= d > 0$ by assumption, Corollary  \ref{876t12n8vcrmlp3skdhrfbgy} implies that 
		there exist an $X$-rotation $w'$ of $w$ and a basis $Y$ of $F$ such that:
		$$
		\text{$(X,w'), (Y,w')$ are prefundamental \ \ and \ \ $L(Y,w') \prec L(X,w') \preceq L(X,w)$.}
		$$
		Note that $(X,w') \sim (X,w)$ by (ii) and that $(Y,w') \sim (X,w')$ by (i); so  $(Y,w') \in \Ceul$ and $L(Y,w') \prec L(X,w)$,
		contradicting the minimality of $L(X,w)$. This proves \eqref{lkjfkj2iq2ywgsxvoqw}.
		
		In view of \eqref{lkjfkj2iq2ywgsxvoqw}, it suffices to prove the $|\Delta(X,w)|=0$ case of the proposition.
		Consider some prefundamental $(X,w)$ such that $|\Delta(X,w)|=0$. Write $w_X = (w_1, \dots, w_s)$ and let $E_1, \dots, E_g$ be the $(X,w)$-admissible sets.
		The fact that  $|\Delta(X,w)|=0$ implies that $\{1, \dots, s\} = C(X,w) = \bigcup_{i=1}^g E_i$,
		and we know that the $E_i$ are pairwise disjoint and that $|E_i|=4$ for each $i$.
		So $\{1,2,3,4\} , \{5,6,7,8\}, \dots \{4g-3,4g-2,4g-1,4g\}$ are the $(X,w)$-admissible sets.
		It follows that $w = [w_1^{-1},w_2^{-1}]  [w_5^{-1},w_6^{-1}] \cdots [w_{4g-3}^{-1},w_{4g-2}^{-1}]$.
		It is easy to see that there is a basis $Z = \{a_1,b_1,a_2,b_2,\dots,a_g,b_g, y_1,\dots,y_r\}$ of $F$
		such that $w = \prod_{i=1}^g [a_i,b_i]$. 
		We have $\langle X \mid w \rangle = \langle Z \mid w \rangle$ and Lemma \ref{lkjDnCbvfsdf4rr8BqA2ckjhxgfz} gives 
		$$
		\langle Z \mid w \rangle \, \isom \, \langle a_1,b_1,\dots,a_g,b_g \mid w \rangle \star F(y_1,\dots,y_r).
		$$
	\end{proof}
	\begin{remark}\label{presIsoRemark}
		Let $G = \lb x_1, \dots, x_n \mid w_1, \dots, w_k \rb$ where the $w_i$ are reduced. If $x_i^{-1}$ appears only in $w_j$ and $x_i$ does not appear in any $w_i$, then $G \isom \lb x_1, \dots,x_{i-1}, \hat{x}_i, x_{i+1}, \dots, x_n \mid w_1, \dots, w_{j-1}, \hat{w}_j, w_{j+1}, \dots, w_k \rb$. 
	\end{remark}
	\begin{proposition}\label{fundamentalProposition2}
		Let $X = \{x_1, \dots, x_n\}$ be a basis of $F$, let $W = \{w_1, \dots, w_k\}$ be an $X$-prefundamental set and let $G = \lb X \mid W \rb$. Then $G \isom \lb X' \mid v_1, v_2, \dots, v_s \rb$ where $X'$ is a subset of $X$ and if $x_i' \in X' \cup X'^{-1}$ appears in some $v_j$, $x_i'^{ -1}$ appears in the same word $v_j$.
	\end{proposition}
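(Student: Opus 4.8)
The plan is to argue by induction on the number $k$ of relators in $W$, after first deleting any $w_j$ equal to the empty word (such relators do not affect $G$). The base of the induction is also the \emph{stopping} case of the inductive step, namely the case where $W$ already has the desired property: whenever a letter $x \in X \cup X^{-1}$ occurs in some $w_{j,X}$, its inverse $x^{-1}$ occurs in the \emph{same} word $w_{j,X}$. In that case one simply takes $X' = X$ and $v_j = w_j$. Using condition (1) of $X$-prefundamentality, I would first note that $W$ fails this property exactly when some letter $x_i$ occurs in one relator while $x_i^{-1}$ occurs in a \emph{different} relator (in particular $W$ has the property whenever $k \le 1$); so the inductive step only has to handle the case where such an $x_i$ exists.

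Assume then that $x_i$ occurs in $w_a$ and $x_i^{-1}$ occurs in $w_b$ with $a \ne b$. By conditions (1) and (2) of prefundamentality, $x_i$ and $x_i^{-1}$ each occur exactly once across $W$, so $w_{a,X} = A x_i B$ and $w_{b,X} = C x_i^{-1} D$ for some reduced words $A, B, C, D$ over $X \setminus \{x_i\}$, while no relator $w_c$ with $c \ne a,b$ involves $x_i^{\pm 1}$. Since the relation $w_a = 1$ expresses $x_i$ as the word $A^{-1}B^{-1}$ in the remaining generators, $x_i$ is redundant, and I would eliminate it by the standard Tietze transformation (cf.\ Remark \ref{presIsoRemark}): substitute $A^{-1}B^{-1}$ for $x_i$ everywhere and delete $w_a$. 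This produces
$$ G \isom \lb X \setminus \{x_i\} \mid v, (w_c)_{c \ne a,b} \rb , $$
where $v$ is the reduction of the spliced word $C B A D$ (obtained from $w_b$ by replacing $x_i^{-1}$ with $BA$); if $v = 1$ I discard it. Either way the number of relators strictly decreases.

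The heart of the argument is then to show that $\{v\} \cup \{w_c : c \ne a,b\}$ is $(X \setminus \{x_i\})$-prefundamental, so that the inductive hypothesis applies. Condition (2) is immediate: the letters of $v$ lie among the letters of $w_a$ and $w_b$ other than $x_i^{\pm 1}$, those two relators have disjoint letter sets by condition (2) for $W$, the other relators are untouched, and free-reduction only deletes letters. For condition (1), I would compare the set $\Sigma$ of letters occurring across $W$ — which is closed under inversion, with each letter occurring at most once — with the analogous set $\Sigma'$ for the new relators. A letter $x_\ell$ with $\ell \ne i$ lies in $\Sigma'$ exactly when either its unique $W$-occurrence lay in some untouched $w_c$, or that occurrence lay inside $C B A D$ and survived the reduction. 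A short case analysis then gives $\Sigma' = (\Sigma \setminus \{x_i, x_i^{-1}\})$ with possibly a few \emph{entire} pairs $\{x_\ell, x_\ell^{-1}\}$ further removed — precisely those pairs whose two occurrences both sat inside $C B A D$ and cancelled against one another. Since $\Sigma$ is inversion-closed, so is $\Sigma'$; this is condition (1). Thus the new relator set is prefundamental with fewer relators, and the inductive hypothesis gives $G \isom \lb X' \mid v_1, \dots, v_s \rb$ with $X' \subseteq X \setminus \{x_i\} \subseteq X$ and the required balance property, completing the induction.

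I expect the third paragraph to be the step that needs genuine care: one must pin down how the free-reduction of $C B A D$ moves letters around. The feature that makes it go through is that, in a prefundamental set, every letter occurs at most once overall and $w_a, w_b$ share no letter besides $x_i^{\pm 1}$; hence for each $\ell \ne i$ the word $C B A D$ contains at most one occurrence of $x_\ell$ and at most one of $x_\ell^{-1}$, every cancellation removes such a matched pair, and therefore a letter of $C B A D$ is deleted during reduction if and only if its unique inverse occurrence is deleted too. The remaining ingredients — the Tietze elimination, the base cases $k \le 1$, and the bookkeeping that reads off $X'$ as the set of generators never eliminated — are routine.
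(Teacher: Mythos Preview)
Your proposal is correct and follows essentially the same approach as the paper: iteratively eliminate a generator $x_i$ whose two occurrences lie in different relators, merge those two relators into one, and check that the resulting set remains prefundamental over $X\setminus\{x_i\}$. The paper phrases the merge as ``rotate $w_b$ so that $x_i^{-1}$ is its first letter, then splice $w_b$ into $w_a$ at the occurrence of $x_i$'' and invokes Remark~\ref{presIsoRemark}, whereas you phrase it as a Tietze elimination (solve $w_a$ for $x_i$ and substitute into $w_b$); the merged words $ADCB$ and $CBAD$ you and the paper obtain are cyclic rotations of each other, so this is cosmetic. Your third paragraph, verifying that the reduced splice is still prefundamental because each letter occurs at most once in $CBAD$ and hence cancellations remove inverse pairs together, is more explicit than the paper, which simply asserts the analogous fact.
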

	
	\begin{proof}
		Suppose there exists some $i$ such that $x_i$ and $x_i^{-1}$ are in different words (say $w_1$ and $w_2$ respectively). Since replacing $w_2$ by some rotation $w_2'$ does not change $\lb X \mid W \rb$, we may assume $x_i^{-1}$ is the first letter of $w_2$. Let $X_1 = X \setminus \{x_i\}$. Let $n_1$ be the word obtained by replacing the letter $x_i$ in $w_1$ by $x_iw_2$ and reducing (recalling that $w_2$ begins with $x_i^{-1}$). Then, $n_1$ is a reduced word with letters in $X_1$ and  
		$$G \isom \lb x_1, \dots, x_n \mid n_1, w_2, w_3, \dots, w_k \rb \isom \lb x_1, \dots,x_{i-1}, \hat{x}_i, x_{i+1}, \dots, x_n \mid n_1, w_3, \dots, w_k \rb,$$ 
		the second isomorphism by Remark \ref{presIsoRemark}.  Then, the set $\{n_1, w_3, \dots, w_k\}$ is $X_1$-prefundamental. Repeating this process until no $i$ exists, we obtain that $G \isom \lb X' \mid v_1, v_2, \dots, v_s \rb$ where for each $x_k' \in X' \cup X'^{-1}$ that appears in some $v_j$, $x_k'^{ -1}$ also appears in $v_j$.
	\end{proof}

	\begin{corollary}\label{fundamentalCorollary}
		Let $X = \{x_1, \dots, x_n\}$ be a basis of $F$, let $W = \{w_1, \dots, w_k\}$ be an $X$-prefundamental set and let $G = \lb X \mid W \rb$.  Then $G \isom  H_1 \star \dots \star H_{s-1} \star H_s \star F_r$ where $r,s \in \Nat$ and for each $i$, $H_i$ has form $H_i = \lb a_1, b_1, a_2, b_2, \dots, a_{g_i}, b_{g_i} \mid \prod_{j = 1}^{g_i} [a_j, b_j] \rb$.
	\end{corollary}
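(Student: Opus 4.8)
The plan is to bootstrap from the single-relation case handled in Proposition \ref{fundamentalProposition}. First I would apply Proposition \ref{fundamentalProposition2}: after replacing $(X,W)$ by the pair $(X',\{v_1,\dots,v_s\})$ it produces, I may assume $G = \lb X' \mid v_1,\dots,v_s\rb$, that $\{v_1,\dots,v_s\}$ is $X'$-prefundamental (a glance at the proof of Proposition \ref{fundamentalProposition2} confirms the output set is again prefundamental — this is precisely what makes the corollary go through), and that every letter of $X'\cup X'^{-1}$ occurring in some $v_j$ has its inverse occurring in the \emph{same} word $v_j$. Any trivial relators among the $v_j$ may simply be deleted.

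Next I would sort the alphabet by which relation uses it. For $j\in\{1,\dots,s\}$ let $X_j$ be the set of $x\in X'$ such that $x$ or $x^{-1}$ occurs in $v_{j,X'}$, and let $X_0 = X'\setminus\bigcup_{j=1}^s X_j$ be the letters occurring in no relation. Condition (2) in the definition of prefundamental (no element of $X'\cup X'^{-1}$ occurs more than once across all the words) forces $X_0,X_1,\dots,X_s$ to be pairwise disjoint, hence a partition of $X'$; moreover $v_j\in F(X_j)$, and the self-containment property above shows the singleton $\{v_j\}$ is $X_j$-prefundamental (in fact $X_j$-fundamental). Applying Lemma \ref{lkjDnCbvfsdf4rr8BqA2ckjhxgfz} repeatedly — peeling off one block $X_j$ at a time, treating $F(X_0)$ as $\lb X_0\mid\emptyset\rb$, and using associativity of $\star$ — then gives
$$
G \isom \lb X_1\mid v_1\rb \star \lb X_2\mid v_2\rb \star \cdots \star \lb X_s\mid v_s\rb \star F(X_0).
$$

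Finally I would apply Proposition \ref{fundamentalProposition} to each factor $\lb X_j\mid v_j\rb$, obtaining $\lb X_j\mid v_j\rb \isom H_j \star F_{r_j}$ with $H_j = \lb a_1,b_1,\dots,a_{g_j},b_{g_j}\mid \prod_{i=1}^{g_j}[a_i,b_i]\rb$ and $r_j = |X_j|-2g_j\ge 0$. Substituting, reordering the free factors, and absorbing $F(X_0)$ together with all the $F_{r_j}$ into a single free group $F_r$ where $r = |X_0| + \sum_{j=1}^s r_j$ (a free product of free groups being free on the disjoint union of bases), yields $G\isom H_1\star\cdots\star H_s\star F_r$, as claimed. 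The only genuinely substantial ingredient is Proposition \ref{fundamentalProposition}, which is already established; the remaining work is bookkeeping — checking the supports $X_j$ are disjoint, justifying the iterated use of the two-factor free-product lemma (a routine induction on $s$), and keeping track of the degenerate cases where some $H_j$ is trivial ($g_j=0$) or $F_r$ is trivial ($r=0$). I therefore expect no serious obstacle, only the care required to keep the combinatorial data organized.
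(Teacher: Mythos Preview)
Your proposal is correct and follows essentially the same route as the paper: apply Proposition \ref{fundamentalProposition2}, partition $X'$ according to which relator (if any) uses each letter, invoke Lemma \ref{lkjDnCbvfsdf4rr8BqA2ckjhxgfz} to split $G$ as a free product over this partition, and then apply Proposition \ref{fundamentalProposition} to each single-relator factor. The paper's proof is terser (it does not spell out the disjointness of the $X_j$, the induction for the iterated free product, or the gathering of the free factors $F_{r_j}$ into one $F_r$), but the argument is the same.
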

	
	\begin{proof}
		By Proposition \ref{fundamentalProposition2},  $G \isom \lb X' \mid v_1, v_2, \dots, v_s \rb$ where for all $i$, if $x_i' \in X' \cup X'^{-1}$ appears in $v_j$ then so does $x_i'^{-1}$. Let $X_j = \setspec{x_i}{x_i \text{ appears in $v_{j,X'}$}}$ and let $\hat{X} = \setspec{x' \in X'}{x' \text{ does not appear in any $v_{j,X'}$}}$. Then by Lemma \ref{lkjDnCbvfsdf4rr8BqA2ckjhxgfz}, $G \isom G_1 \star G_2 \star \dots \star G_s \star F(\hat{X})$ where $G_j \isom \lb X_j \mid v_{j,X'} \rb$ and $v_{j,X'}$ is $X_j$-fundamental. Applying Proposition \ref{fundamentalProposition} to each $G_i$ and using that $A \star B \isom B \star A$ gives the result.      
	\end{proof}
	
	\section{Main Results}\label{mainResults}
	
	\subsection{Setup} 
	\begin{nothing}\label{setup}
		Let $X$ be a compact Riemann surface, let $N = \CPone$ and let $f : X \to N$ be a branched cover of degree $n > 1$ with $r$ branch points, denoted by $B = \{b_1, \dots, b_r\}$. Let $Z = N \setminus B$, let $z \in Z$ and fix an ordering $\{z_1, \dots, z_n\}$ of the points in $f^{-1}(z)$. For each $i \in \{1, \dots, r\}$, let $\gamma_i$ denote the element of $\pi_1(Z,z)$ passing only around $b_i$. Let $\Xop = X \setminus f^{-1}(B)$. Then 
		$$ \pi_1(Z, z) \isom \big \lb \gamma_1, \dots, \gamma_r \ | \ {\prod_{i=1}^r \gamma_i = 1} \big\rb \isom F(\gamma_1, \dots, \gamma_{r-1})$$
		and $f|_{\Xop}: \Xop \to Z$ is a covering space, inducing a permutation representation $\tau : \pi_1(Z,z) \to S_n$ determined by $f$ and the labelling of the elements in $f^{-1}(z)$. It follows from Theorem \ref{EzellMainTheorem} that the image of $\tau$ is a transitive subgroup of $S_n$. We will see that if we view $\pi_1(Z,z)$ as a free group on $\gamma_1, \dots, \gamma_{r-1}$ we have the following diagram of groups:
		
		$$
		\xymatrix{
			&\pi_1(\Xop,z_1) \ar@{->>}[r] \ar@{^{(}->}[d] & \pi_1(\Xop,z_1) / \Neul \isom \pi_1(X,z_1)\\		
			F(\gamma_1, \dots, \gamma_{r-1}) &\pi_1(Z,z) \ar@{=}[l] &  
		}
		$$ 
		where $\Neul$ and the isomorphism are defined in Proposition \ref{newPresentation}. Viewing $\pi_1(\Xop, z_1)$ as a subgroup of $\pi_1(Z,z)$, our goal is to give a sequence of isomorphisms from $\pi_1(\Xop,z_1) / \Neul$ to the classical presentation of $\pi_1(X,z_1)$ as a group with $2g$ generators ($g$ is the genus of $X$) and $1$ relation, where the generators and relations are described as images of elements $\gamma_i \in F(\gamma_1, \dots, \gamma_{r-1})$ and the relation is written as a product of commutators. Let $G = \pi_1(Z,z)$ and let $H = \pi_1(\Xop,z_1)$.  Our method is achieved via the following five steps:
	\end{nothing}
	\subsection{Algorithm Description}\label{algoDescription}
	
	\begin{itemize}
		\item \textbf{Step 1:} Compute a Schreier transversal $R$ for the right cosets of $G/H$. Compute the basis $Y_R$ for $H = \pi_1(\Xop, z_1) \leq \pi_1(Z,z)$. Label these basis elements $h_1, \dots h_s$ where $s = 1 + n(r-2)$.
		\item \textbf{Step 2:} Use the Schreier rewriting process to express a set of generators of $\Neul$ as products of elements in $Y_R$ and their inverses. Corollary \ref{step2MainResult} shows that this set of generators of $\Neul$ is a $Y_R$-fundamental set. This ensures that Step 4 is always straightforward.  
		\item \textbf{Step 3:} Simplify the presentation from Step 2 until $\pi_1(X,z_1)$ has at most one relation (and if possible zero relations). That is, we obtain $\pi_1(X,z_1) = \lb t_1, \dots, t_m \mid w \rb$ where $w$ is a $\{t_1, \dots, t_m\}$-fundamental word and each $t_j$ is the image of some $h_i$ from Step 1. Obtaining this presentation for $G$ is straightforward due to the combination of Proposition \ref{fundamentalProposition2} and Corollary \ref{fundamentalCorollary}. 
		
		\item \textbf{Step 4:} Find a new presentation of $\pi_1(X,z_1)$, expressing the unique relation (if there is one) as a product of commutators. This is shown to always be possible by Proposition \ref{fundamentalProposition}. 
		
		\item \textbf{Step 5:} Express $\pi_1(X,z_1)$ in terms of the images of the $\gamma_i \in F(\gamma_1, \dots, \gamma_{r-1})$ by reversing the substitutions made in Steps 1 and 4. 
	\end{itemize}
	
	\subsection{Step 1} 
	\begin{nothing} Our goal for this step is to find generators of $H = \pi_1(\Xop, z_1)$. Observe that we can view $H$ as the subgroup of $G = \pi_1(Z,z)$ defined as follows:
		$$H = \setspec{g \in G}{\text{ the lift of $g$ starting at $z_1 \in \Xop$ is a loop in $\Xop$}}.$$ 
		Viewing $H$ in this way, we have by Corollary \ref{stabilizerInverseImage} that $H = \tau^{-1}(\Stab(1))$. Since $n>1$, Proposition \ref{subgroup} implies that $H = \pi_1(\Xop, z_1)$ is an index $n$ subgroup of $G$ so $H \neq G$ and we may assume without loss of generality that $\gamma_1 \notin H$ (otherwise, we can choose another ordering of the points in $f^{-1}(z)$). Let $S = \{\gamma_1, \dots, \gamma_{r-1}\}$ and order $S \cup S^{-1}$ so that
		$$\gamma_1 < \gamma_1^{-1} < \gamma_2 < \gamma_2^{-1} < \dots, \gamma_{r-1} < \gamma_{r-1}^{-1}.$$
		Since $\gamma_1 \notin H$, the construction of the Schreier transversal $R$ given in \ref{transversalConstruction} satisfies $1,\gamma_1 \in R$. By the discussion in \ref{schreierConstruction}, the subgroup $H$ is a free group with basis
		$$Y_R = \setspec{r\gamma_i \rho(r\gamma_i)^{-1}}{r \in R, 1 \leq i \leq r-1} \setminus \{1\}$$
		where $\rho:F \to R$ is defined as in Definition \ref{rightRepresentatives}. This completes Step 1. 
	\end{nothing}
	From this point until the end of Section \ref{mainResults}, $S$ and its ordering, $H$, $R$ and $Y_R$ are fixed. 
	
	\subsection{Step 2}
	
	\begin{nothing}\label{SetupStep2} For each generator $\gamma_i$ of $\pi_1(Z,z)$, let $\sigma_i = \tau(\gamma_i)$ denote the associated permutation in $S_n$. For each $i = 1, \dots, r$ we write $\sigma_i$ as a product of disjoint cycles 
		$$ \sigma_i = e_{i,1} e_{i,2} \dots e_{i,k_i}$$
		where
		\begin{itemize}
			\item $k_i$ is the number of cycles in the cycle decomposition of $\sigma_i$,
			\item $\ell_{i,j} = \length(e_{i,j})$ for $j=1,\dots, k_i$,
			\item $\ell_i(m)$ denotes the length of the cycle that contains $m \in \{1,\dots, n\}$ in the cycle decomposition of $\sigma_i$.	
		\end{itemize} 
	\end{nothing}
	
	\begin{remark}\label{branchPointsCycles}[\cite{ezell1978branch}, p.128, below Figure 2]
		Given $b_i \in B$,
		\begin{enumerate}[\rm(a)] 
			\item each point in $f^{-1}(b_i)$ corresponds to some cycle $e_{i,j}$ in the cycle decomposition of $\sigma_i$.  Consequently, $|f^{-1}(b_i)| = k_i$ (i.e. the cardinality of the fiber equals the number of disjoint cycles in the cycle decomposition of $\sigma_i$).
			\item For each point $d_{i,j} \in f^{-1}(b_i)$, the local mapping from a sufficiently small disk around $d_{i,j}$ to a disk around $b_i$ is topologically equivalent to $t\mapsto t^{\ell_{i,j}}$. Consequently, for each $m \in \{1, \dots, n\}$, the lift of $\gamma_i^{\ell_{i}(m)} \in \pi_1(Z,z)$ that starts at $z_m$ is a loop in $\Xop$. 
		\end{enumerate}
	\end{remark}
	
	\begin{nothing}\cite[p.49-50]{Hatcher}\label{quotientNotation}
		We have $\Xop = X \setminus f^{-1}(B)$. By Remark \ref{branchPointsCycles} (a), $f^{-1}(B) = \cup_{i=1}^r \cup_{j = 1}^{k_i} d_{i,j}$ where $d_{i,j} \in f^{-1}(b_i)$ corresponds to the cycle $e_{i,j}$ in the cycle decomposition of $\sigma_i$. For each point $d_{i,j} \in f^{-1}(B)$, attach a 2-cell $C_{i,j}$ whose attaching map $\bar{\phi}_{i,j}: S^1 \to \Xop$ is a loop around only the point $d_{i,j}$ and whose image, which we denote by $\phi_{i,j}$, contains some point $z_k$ where $k$ is any element of $e_{i,j}$. Let $W = \Xop \cup \bigcup_{i,j} C_{i,j}$ and observe that $W$ retracts to a subspace that is homeomorphic to $X$, so $\pi_1(X,z_1) \isom \pi_1(W,z_1)$. Since $\Xop$ is path connected, for each $k \in \{1, \dots, n\}$ there exists a path $\beta$ from $z_1$ to $z_k$. Then, for each pair $(i,j)$ such that $i \in \{1, \dots, r\}$ and $j \in \{1, \dots, k_i\}$, let $\beta_{i,j}$ be a path from $z_1$ to $z_k$, where $k \in e_{i,j}$ (note that $k$ depends on $(i,j)$). Then $\beta_{i,j} \phi_{i,j} \beta_{i,j}^{-1}$ is a loop in $W$ around $d_{i,j}$ (here we abuse notation and view $d_{i,j}$ as a point in $W$). Recall that $\widetilde{\gamma_i^{\ell_{i,j}}}^k$ denotes the lift of $\gamma_i^{\ell_{i,j}} \in \pi_1(Z,z)$ that starts at $z_k$. Since $f_* : \pi_1(\Xop,z_1) \to \pi_1(Z,z)$ is injective and (by Remark \ref{branchPointsCycles} (b)) we have $f_*(\phi_{i,j}) = \gamma_i^{\ell_{i,j}} = f_*({\widetilde{\gamma_i^{\ell_{i,j}}}}^k)$ it follows that $\beta_{i,j} \phi_{i,j} \beta_{i,j}^{-1}$ and $\beta_{i,j} \widetilde{\gamma_i^{\ell_{i,j}}}^k\beta_{i,j}^{-1}$ are homotopic. By \cite[Proposition 1.26 (a)]{Hatcher}, $\pi_1(X,z_1) \isom \pi_1(\Xop,z_1) / \Neul'$ where $\Neul'$ is the normal subgroup of $\pi_1(\Xop, z_1)$ generated by all elements of form $\beta_{i,j} \widetilde{\gamma_i^{\ell_{i,j}}}^k\beta_{i,j}^{-1}$. Since the choice of $\beta_{i,j}$ does not matter, (see the remark below the statement of \cite[Proposition 1.26 (a)]{Hatcher}) we obtain the following proposition:        
	\end{nothing}
	
	\begin{proposition}\label{newPresentation}
		Assume the notation of \ref{quotientNotation}. For each cycle $e_{i,j}$ choose some $k \in e_{i,j}$. For each such $k$, let $\beta_{i,j}$ be any path from $z_1$ to $z_k$ (where $k$ depends on $i$ and $j$). Then, $\pi_1(X,z_1) \isom \pi_1(\Xop,z_1) / \Neul$ where $\Neul$ is the normal subgroup generated by the set 
		$\setspec{\beta_{i,j} \widetilde{\gamma_i^{\ell_{i,j}}}^k\beta_{i,j}^{-1}}{i = 1, \dots, r ; j = 1, \dots, k_i}$. 
	\end{proposition}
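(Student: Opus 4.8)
The plan is to unwind the van Kampen computation sketched in \ref{quotientNotation} and to make explicit why the auxiliary choices are irrelevant. First I would record that the space $W = \Xop \cup \bigcup_{i,j} C_{i,j}$, obtained from $\Xop$ by attaching a $2$-cell $C_{i,j}$ along a small loop encircling only the puncture $d_{i,j}$, deformation retracts onto a subspace homeomorphic to $X$: in a punctured-disk neighbourhood of $d_{i,j}$, the annular region lying between the attaching circle and the puncture collapses radially onto the attaching circle, which bounds $C_{i,j}$, so the local picture becomes an honest disk and the net effect is to fill the puncture back in. Hence $\pi_1(X,z_1) \isom \pi_1(W,z_1)$.

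Next I would apply \cite[Proposition 1.26(a)]{Hatcher}: attaching the $2$-cells $C_{i,j}$ to $\Xop$ realizes $\pi_1(W,z_1)$ as the quotient of $\pi_1(\Xop,z_1)$ by the normal subgroup generated by the conjugated attaching loops $\beta_{i,j}\phi_{i,j}\beta_{i,j}^{-1}$, where $\beta_{i,j}$ is a path from $z_1$ to a point $z_k$ (with $k \in e_{i,j}$) lying on the attaching circle $\phi_{i,j}$. The key identification is that $\beta_{i,j}\phi_{i,j}\beta_{i,j}^{-1}$ equals $\beta_{i,j}\,\widetilde{\gamma_i^{\ell_{i,j}}}^k\,\beta_{i,j}^{-1}$ in $\pi_1(\Xop,z_1)$: by Remark \ref{branchPointsCycles}(b) the attaching circle $\phi_{i,j}$ maps under $f$ to $\gamma_i^{\ell_{i,j}}$, and $\widetilde{\gamma_i^{\ell_{i,j}}}^k$ is by construction a lift of the same element starting at $z_k$, so $f_*(\phi_{i,j}) = \gamma_i^{\ell_{i,j}} = f_*(\widetilde{\gamma_i^{\ell_{i,j}}}^k)$; since $f_* : \pi_1(\Xop,z_1) \to \pi_1(Z,z)$ is injective (Proposition \ref{subgroup}), the two loops coincide. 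Substituting, the normal subgroup $\Neul$ of the statement is exactly the one appearing in the van Kampen quotient, which yields $\pi_1(X,z_1) \isom \pi_1(\Xop,z_1) / \Neul$.

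Finally I would verify that $\Neul$ does not depend on the choices of $k \in e_{i,j}$ and of the paths $\beta_{i,j}$. Two paths from $z_1$ to the same $z_k$ differ by a loop at $z_1$, so the associated generators differ by conjugation in $\pi_1(\Xop,z_1)$, and conjugating the elements of a normal generating set does not change the normal closure; replacing $z_k$ by another $z_{k'}$ with $k' \in e_{i,j}$ changes the attaching loop only by a based homotopy together with a path-conjugation of the same $f_*$-image, hence again only within the normal closure. This is exactly the content of the remark following \cite[Proposition 1.26(a)]{Hatcher} invoked in \ref{quotientNotation}. The only step that is not pure bookkeeping is the deformation retraction $W \simeq X$ of the first paragraph; everything after that is formal manipulation with van Kampen's theorem and the injectivity of $f_*$.
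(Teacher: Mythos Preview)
Your proposal is correct and follows essentially the same line as the paper: the argument in \ref{quotientNotation} already constructs $W$ by attaching $2$-cells, invokes the retraction $W \simeq X$, applies \cite[Proposition 1.26(a)]{Hatcher}, and identifies the attaching loops with the lifts $\widetilde{\gamma_i^{\ell_{i,j}}}^k$ via injectivity of $f_*$ and Remark \ref{branchPointsCycles}(b). You have added a little more detail on the deformation retraction and on independence from the choice of $k \in e_{i,j}$, but the strategy is identical.
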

	
	\begin{nothing}\label{SchreierBasis}
		
		Let $e_{i,j}$ be any cycle in the decomposition of $\tau(\gamma_i)$. For each $1 \leq i \leq r-1$ and $1 \leq j \leq k_i$, we define 
		$$R_{i,j} = \setspec{\delta \in R}{\tau(\delta) \text{ maps 1 to some element of $e_{i,j}$} }.$$
		Observe that
		\begin{itemize}
			\item $|R_{i,j}| = \ell_{i,j}$, 
			\item for each $i$, $R = \bigsqcup_{j = 1}^{k_i} R_{i,j}$ is a partition of $R$. 
		\end{itemize}
		Fix some $\delta_{i,j} \in R_{i,j}$ and let $p_1 = \delta_{i,j}$. Then, let $p_{m+1} = \rho(p_{m} \gamma_i)$ for each $m = 1, \dots, \ell_{i,j}-1$. (Note that $p_m$ depends on $i$ and on $\delta_{i,j}$.) We find that $\delta_{i,j} \gamma_i^{\ell_{i,j}} \delta_{i,j}^{-1} \in H = \tau^{-1}(\Stab(1))$
		and
		\begin{equation}\label{deltaConjugation}
			\delta_{i,j}\gamma_i^{\ell_{i,j}} \delta_{i,j}^{-1} = (\delta_{i,j} \gamma_i p_{2}^{-1})(p_{2} \gamma_i p_{3}^{-1}) \dots (p_{\ell_{i,j}-1}\gamma_i p_{\ell_{i,j}}^{-1})(p_{\ell_{i,j}} \gamma_i \delta_{i,j}^{-1}) 
		\end{equation}
		from which it follows that $p_{\ell_{i,j}} \gamma_i \delta_{i,j}^{-1} \in H$.  
	\end{nothing}
	\begin{proposition}\label{productrinR}
		Let $i \in \{ 1, \dots, r-1\}$, let $j \in \{1, \dots, k_i\}$ and let $\delta_{i,j} \in R_{i,j}$. Then, there exists an ordering of the elements in $R_{i,j}$ (dependant on the choice $\delta_{i,j}$) such that
		$$\delta_{i,j} \gamma_i^{\ell_{i,j}}\delta_{i,j}^{-1} = \prod_{x \in R_{i,j}} \phi_R(x,\gamma_i).$$
		That is, we can write $\delta_{i,j} \gamma_i^{\ell_{i,j}}\delta_{i,j}^{-1}$ as a product of the $\ell_{i,j}$ elements $\setspec{\phi_R(x,\gamma_i)}{x \in R_{i,j}}$ in some order (dependant on the choice $\delta_{i,j}$).  
		
	\end{proposition}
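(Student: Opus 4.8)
The plan is to unwind the decomposition \eqref{deltaConjugation} and to show that the $\ell_{i,j}$ group elements $p_1,\dots,p_{\ell_{i,j}}$ occurring in it are pairwise distinct and exhaust $R_{i,j}$; once this is done, ordering $R_{i,j}$ as $(p_1,\dots,p_{\ell_{i,j}})$ and recognizing each factor of \eqref{deltaConjugation} as a value of $\phi_R(\cdot,\gamma_i)$ gives the stated identity. Recall from \ref{SchreierBasis} that $p_1=\delta_{i,j}$ and $p_{m+1}=\rho(p_m\gamma_i)$ for $1\le m\le\ell_{i,j}-1$.

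First I would verify that every factor of \eqref{deltaConjugation} has the form $\phi_R(p_m,\gamma_i)$. For $1\le m\le\ell_{i,j}-1$ this is immediate from the definition $\phi_R(r,s)=rs\rho(rs)^{-1}$ of \ref{schreierConstruction} together with $p_{m+1}=\rho(p_m\gamma_i)$, which gives $p_m\gamma_i p_{m+1}^{-1}=\phi_R(p_m,\gamma_i)$. For the last factor $p_{\ell_{i,j}}\gamma_i\delta_{i,j}^{-1}$, I would use that $p_{\ell_{i,j}}\gamma_i\delta_{i,j}^{-1}\in H$, which is established at the end of \ref{SchreierBasis}: since $\delta_{i,j}\in R$, properties (2b) and (3b) of \ref{rightRepresentatives} give $\rho(p_{\ell_{i,j}}\gamma_i)=\rho(\delta_{i,j})=\delta_{i,j}$, so this last factor equals $\phi_R(p_{\ell_{i,j}},\gamma_i)$. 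Hence \eqref{deltaConjugation} reads $\delta_{i,j}\gamma_i^{\ell_{i,j}}\delta_{i,j}^{-1}=\prod_{m=1}^{\ell_{i,j}}\phi_R(p_m,\gamma_i)$.

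It then remains to prove $\{p_1,\dots,p_{\ell_{i,j}}\}=R_{i,j}$ with the $p_m$ pairwise distinct. Each $p_m$ lies in $R$, since $p_1=\delta_{i,j}\in R$ and $\rho$ takes values in $R$, and $Hp_m=Hp_1\gamma_i^{m-1}$ for all $m$, because $\rho(g)$ always lies in the coset $Hg$. I would then invoke the standard bijection between the right cosets $H\backslash G$ and $\{1,\dots,n\}$ (sending $H$ to $1$, compatibly with $H=\tau^{-1}(\Stab(1))$): under it, right translation of cosets by $\gamma_i$ corresponds to the monodromy permutation $\sigma_i=\tau(\gamma_i)$, so $Hp_m$ corresponds to the image of $\tau(\delta_{i,j})(1)$ under $\sigma_i^{m-1}$. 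Since $\tau(\delta_{i,j})(1)\in e_{i,j}$ by the definition of $R_{i,j}$, and $e_{i,j}$ is a single $\sigma_i$-cycle of length $\ell_{i,j}$, the points $\sigma_i^{m-1}(\tau(\delta_{i,j})(1))$ for $m=1,\dots,\ell_{i,j}$ are exactly the $\ell_{i,j}$ distinct elements of $e_{i,j}$. Consequently the cosets $Hp_1,\dots,Hp_{\ell_{i,j}}$ are pairwise distinct and are precisely the cosets $\{H\delta:\delta\in R_{i,j}\}$; as each $p_m\in R$ and $R$ meets each coset exactly once, the $p_m$ are distinct and $\{p_1,\dots,p_{\ell_{i,j}}\}=R_{i,j}$ (equality because both sides have $\ell_{i,j}$ elements, using $|R_{i,j}|=\ell_{i,j}$ from \ref{SchreierBasis}).

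I expect the only delicate point to be the bookkeeping in the last paragraph, namely matching right cosets of $H$ with the points $\{1,\dots,n\}$ and checking that right translation by $\gamma_i$ induces $\sigma_i$ (and not $\sigma_i^{-1}$, depending on conventions for the permutation representation). This ambiguity is harmless: $e_{i,j}$ is invariant under $\sigma_i^{\pm1}$ and the $\langle\sigma_i\rangle$-orbit of any of its points is all of $e_{i,j}$ with period exactly $\ell_{i,j}$, so the distinctness-and-exhaustion conclusion holds in either case. Everything else is a routine unwinding of the definitions in \ref{rightRepresentatives}, \ref{schreierConstruction} and \ref{SchreierBasis}.
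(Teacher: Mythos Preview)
Your proposal is correct and follows essentially the same approach as the paper: unwind \eqref{deltaConjugation}, identify each factor as $\phi_R(p_m,\gamma_i)$, and then show $\{p_1,\dots,p_{\ell_{i,j}}\}=R_{i,j}$ by tracking where $\tau(p_m)$ sends $1$ around the cycle $e_{i,j}$. The only cosmetic difference is that the paper argues inductively using $h_m=p_m\gamma_ip_{m+1}^{-1}\in H$ to compute $\tau(p_m)(1)=c_m$ step by step, while you package the same computation via the coset identity $Hp_m=Hp_1\gamma_i^{\,m-1}$ and the bijection $H\backslash G\leftrightarrow\{1,\dots,n\}$; your remark about the $\sigma_i$ versus $\sigma_i^{-1}$ convention is apt and, as you note, immaterial to the conclusion.
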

	
	\begin{proof}
		We use the decomposition in \eqref{deltaConjugation} and write 
		\begin{equation*}
			\delta_{i,j}\gamma_i^{\ell_{i,j}} \delta_{i,j}^{-1} = \underbrace{(p_{1} \gamma_i p_{2}^{-1})}_{h_1}\underbrace{(p_{2} \gamma_i p_{3}^{-1})}_{h_2} \dots (p_{\ell_{i,j}-1}\gamma_i p_{\ell_{i,j}}^{-1})\underbrace{(p_{\ell_{i,j}} \gamma_i p_{1}^{-1})}_{h_{\ell_{i,j}}} 
		\end{equation*}
		where $\delta_{i,j} = p_{1}$ as in \ref{SchreierBasis}. It suffices to show the equality of sets $R_{i,j} = \setspec{p_{m}}{1 \leq m \leq \ell_{i,j}}$. 
		
		Recall that for each $m$ the element $h_m$ is an element of $H = \tau^{-1}(\Stab(1))$. Write the cycle $e_{i,j} = (c_1 c_2 \dots c_{\ell_{i,j}})$ and without loss of generality assume $\delta_{i,j}$ maps $1$ to $c_1$. Considering the element $h_1 = p_{1} \gamma_i p_{2}^{-1}$ we must have that $\tau(p_{2}^{-1})$ maps $c_{2}$ to 1 and hence $\tau(p_{2})$ maps $1$ to $c_{2}$. It follows that $p_{2} \in R_{i,j}$ and since $c_2 \neq c_1$, $p_{1} \neq p_{2}$. Continuing inductively, we observe that $\tau(p_{m}) = c_m \in e_{i,j}$ for all $1 \leq m \leq \ell_{i,j}$. It follows that $R_{i,j} \supseteq \setspec{p_{m}}{1 \leq m \leq \ell_{i,j}}$. Since the $c_m$ are distinct, it follows that the set of elements $\setspec{p_m}{1\leq m \leq \ell_{i,j}}$ contains $\ell_{i,j}$ distinct elements. Since $|R_{i,j}|=\ell_{i,j}$, $R_{i,j} = \setspec{p_m}{1 \leq m \leq \ell_{i,j}}$.

	\end{proof}
	
	\begin{corollary}\label{firstDecomposition}
		Let $i \in \{ 1, \dots, r-1\}$. For each $j = 1, \dots, k_i$, let $\delta_{i,j} \in R_{i,j}$. Then we can write
		$$\prod_{j = 1}^{k_i} \delta_{i,j} \gamma_i^{\ell_{i,j}}\delta_{i,j}^{-1} = \left(\prod_{x \in R_{i1}} \phi_R(x,\gamma_i)\right) \left(\prod_{x \in R_{i2}} \phi_R(x,\gamma_i)\right) \dots \left(\prod_{x \in R_{ik_i}} \phi_R(x,\gamma_{i})\right) = \prod_{x \in R} \phi_R(x,\gamma_{i})$$
		where the ordering of the elements in the product on the right-hand side depends on the choices of $\delta_{i,j}$. 
		
	\end{corollary}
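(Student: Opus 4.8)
The plan is to obtain this as an almost immediate consequence of Proposition \ref{productrinR}, combined with the observation recorded in \ref{SchreierBasis} that, for our fixed $i$, the sets $R_{i,1}, \dots, R_{i,k_i}$ form a partition $R = \bigsqcup_{j=1}^{k_i} R_{i,j}$.

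First I would fix $i \in \{1,\dots,r-1\}$ and, for each $j \in \{1,\dots,k_i\}$, the chosen representative $\delta_{i,j} \in R_{i,j}$. Applying Proposition \ref{productrinR} to the pair $(i,j)$ and the choice $\delta_{i,j}$ produces an ordering of the finite set $R_{i,j}$ for which
$$\delta_{i,j}\gamma_i^{\ell_{i,j}}\delta_{i,j}^{-1} = \prod_{x \in R_{i,j}} \phi_R(x,\gamma_i),$$
the product being read in that order. Next I would multiply these $k_i$ identities together in increasing order of $j$, yielding
$$\prod_{j=1}^{k_i} \delta_{i,j}\gamma_i^{\ell_{i,j}}\delta_{i,j}^{-1} = \left(\prod_{x \in R_{i1}} \phi_R(x,\gamma_i)\right)\left(\prod_{x \in R_{i2}} \phi_R(x,\gamma_i)\right)\cdots\left(\prod_{x \in R_{ik_i}} \phi_R(x,\gamma_i)\right),$$
which is the first asserted equality. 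Finally, since the blocks $R_{i,j}$ are pairwise disjoint with union $R$, the right-hand side is precisely a product of the elements $\setspec{\phi_R(x,\gamma_i)}{x \in R}$, each occurring exactly once, listed in an order determined by the choices $\delta_{i,j}$ together with the internal orderings supplied by Proposition \ref{productrinR}; this establishes the second asserted equality $\prod_{x \in R}\phi_R(x,\gamma_i)$ and completes the argument.

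I do not anticipate any genuine obstacle here: the mathematical content is entirely contained in Proposition \ref{productrinR}, and the remaining work is just the bookkeeping needed to say precisely in what sense ``the ordering depends on the choices of $\delta_{i,j}$'' --- namely, each block $R_{i,j}$ contributes its own internally ordered product and the blocks are concatenated in the order $j = 1,\dots,k_i$. No induction, case analysis, or further geometric input is required; the partition statement from \ref{SchreierBasis} does all the remaining work.
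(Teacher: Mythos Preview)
Your proposal is correct and follows essentially the same approach as the paper: apply Proposition \ref{productrinR} to each $j = 1,\dots,k_i$ for the first equality, then invoke the partition $R = \bigsqcup_{j=1}^{k_i} R_{i,j}$ from \ref{SchreierBasis} for the second. The paper's proof is just a terser version of what you wrote.
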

	
	\begin{proof}
		For the first equality, apply Proposition \ref{productrinR} to each $j = 1, \dots, k_i$. The second equality follows from the fact that $R = \bigsqcup_{j = 1}^{k_i} R_{i,j}$ is a partition of $R$. 
	\end{proof}
	
	\begin{corollary}\label{secondDecomposition}
		For each $i \in \{1,\dots, r-1\}$ and $j \in \{1, \dots, k_i\}$, choose some $\delta_{i,j} \in R_{i,j}$. Then    
		$$\prod_{i= 1}^{r-1}\prod_{j = 1}^{k_i} \delta_{i,j} \gamma_i^{\ell_{i,j}}\delta_{i,j}^{-1} = \prod_{y \in Y_R} y$$
		where the ordering of the elements in $\prod_{y \in Y_R} y$ depends on the choices of $\delta_{i,j}$. 
	\end{corollary}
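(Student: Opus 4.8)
The plan is to reduce everything to Corollary \ref{firstDecomposition} and then to the combinatorial bijection of Lemma \ref{stupidBijection}. First I would apply Corollary \ref{firstDecomposition} separately to each $i \in \{1, \dots, r-1\}$; concatenating the resulting equalities in the order $i = 1, 2, \dots, r-1$ turns the left-hand side of the desired identity into
$$
\prod_{i=1}^{r-1} \prod_{j=1}^{k_i} \delta_{i,j} \gamma_i^{\ell_{i,j}} \delta_{i,j}^{-1}
= \prod_{i=1}^{r-1} \left( \prod_{x \in R} \phi_R(x, \gamma_i) \right),
$$
where the ordering of the factors on the right depends on the chosen $\delta_{i,j}$.

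Next I would identify the multiset of factors appearing on the right-hand side. As $i$ ranges over $\{1, \dots, r-1\}$ and $x$ ranges over $R$, the elements $\phi_R(x, \gamma_i)$ run exactly through the multiset $\bar{Y}_R = [\phi_R(r,s) \mid r \in R,\ s \in S]$ of \ref{schreierConstruction}, since $S = \{\gamma_1, \dots, \gamma_{r-1}\}$. Deleting the trivial factors does not change the value of the product, so the right-hand side equals the product, in some order, of the elements of $\bar{Y}_R \setminus \{1\}$ counted with multiplicity. By Lemma \ref{stupidBijection}, the nontrivial values $\phi_R(r,s)$ are pairwise distinct, and by \ref{schreierConstruction} they constitute precisely the Schreier basis $Y_R$. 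Hence each element of $Y_R$ occurs exactly once among the surviving factors, and the right-hand side is $\prod_{y \in Y_R} y$ for the ordering of $Y_R$ induced by the choices of $\delta_{i,j}$. Combining this with the displayed equality gives the claim.

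There is essentially no obstacle here beyond bookkeeping: the one point needing care is the passage from $\bar{Y}_R$ to $Y_R$, i.e.\ that deleting the identity factors and invoking the injectivity of $(r,s) \mapsto \phi_R(r,s)$ on its nontrivial part really does convert the double product into a product over $Y_R$ with each basis element used exactly once. This is exactly what Lemma \ref{stupidBijection} (together with the count of trivial $\phi_R(r,s)$ recorded in \ref{schreierConstruction}) is designed to guarantee, so the proof is short.
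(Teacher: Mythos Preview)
Your proposal is correct and follows essentially the same approach as the paper: apply Corollary \ref{firstDecomposition} for each $i$ to rewrite the left side as $\prod_{i=1}^{r-1}\prod_{x\in R}\phi_R(x,\gamma_i)$, recognize this as $\prod_{y\in\bar{Y}_R}y$ by the definition of $\bar{Y}_R$, and then invoke Lemma \ref{stupidBijection} to pass from $\bar{Y}_R$ to $Y_R$. The paper's proof is just a one-line version of exactly this chain of equalities.
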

	\begin{proof}
		We have $$\prod_{i= 1}^{r-1}\prod_{j = 1}^{k_i} \delta_{i,j} \gamma_i^{\ell_{i,j}}\delta_{i,j}^{-1} = \prod_{i= 1}^{r-1}\prod_{x \in R} \phi_R(x,\gamma_{i})  = \prod_{y \in \bar{Y}_R} y = \prod_{y \in Y_R} y,$$
		the first equality by Corollary \ref{firstDecomposition}, the second by the definition of $\bar{Y}_R$ and the third by Lemma \ref{stupidBijection}. 
	\end{proof}

	\begin{definition}
		Let $\sigma \in S_n$ be a permutation, and write $\sigma$ as a product of disjoint cycles. Let $e = (c_1c_2 \dots c_k)$ denote one of these cycles and let $i,j \in \{1, \dots k\}$. The \textit{distance} between $c_i$ and $c_j$ is given by $$ d_{c_i,c_j} = j-i \mod k \text{, where $d_{c_i,c_j} \in \{0, \dots, k-1\}$.}$$
	\end{definition}
	
	\begin{example}
		Consider the permutation $(21356)(47) \in S_7$. Then, 
		\begin{itemize}
			\item $d_{2,1} = d_{1,3} = d_{4,7} = d_{6,2} = 1,$
			\item $d_{2,5} = d_{3,2} = 3 \neq 2 = d_{2,3},$
			\item $d_{1,7}$ is undefined since $1$ and $7$ are in different cycles. 
			
		\end{itemize}
	\end{example}	
	
	\begin{definition}
		Let $F$ be a free group with basis $T$ and suppose $w = (y_1,y_2, \dots, y_k) \in F$ is such that $y_i \in T \cup T^{-1} \cup \{1\}$ for all $i$ (that is we allow $y_i$ to be the trivial element). We say that the tuple $w' = (y'_1, y'_2, \dots, y'_k)$ is a \textit{strong $T$-rotation of $w$ of length $m$} if for all $i$, $y_i = y'_{i+m}$ where the indices are taken $\mod k$. Note the difference between a strong $T$-rotation and a $T$-rotation defined in Definition \ref{defRotation}. 
	\end{definition}
	
	Recall that $\tau(\gamma_r)$ has cycle decomposition $\tau(\gamma_r) = \sigma_r = e_{r,1} e_{r,2} \dots e_{r, k_r}$.
	
	\begin{proposition}\label{mainProp}
		Let $j \in \{1, \dots, k_r\}$, let $t \in e_{r,j}$ and let $\delta_t \in R$ be such that $\tau(\delta_t)$ maps 1 to $t$.
		\begin{enumerate}[\rm(a)]
			\item We can write $\delta_t (\gamma_r^{-1})^{\ell_{r,j}} \delta_t^{-1} = y_1y_2 \dots y_{(r-1)\ell_{r,j}}$ where $y_i \in Y_R \cup \{1\}$ for all $i = 1, \dots, (r-1)\ell_{r,j}$. 
			\item For each $i$, either $y_i = 1$ or $y_i$ appears at most once in the Schreier decomposition described in {\rm(a)}.
			\item Given $p \in e_{r,j}$ and decompositions 
			\begin{center}
				$\delta_t (\gamma_r^{-1})^{\ell_{r,j}} \delta_t^{-1} = y_1y_2 \dots y_{(r-1)\ell_{r,j}}$
				
				$\delta_p (\gamma_r^{-1})^{\ell_{r,j}} \delta_p^{-1} = y'_1y'_2 \dots y'_{(r-1)\ell_{r,j}}$,
			\end{center}
			$(y'_1,y'_2, \dots, y'_{(r-1)\ell_{r,j}})$ is a strong $Y_R$-rotation of $(y_1,y_2, \dots ,y_{(r-1)\ell_{r,j}})$ of length $d_{t,p}(r-1)$. 
			\item Let $\delta_t (\gamma_r^{-1})^{\ell_{r,j}} \delta_t^{-1} = y_1y_2 \dots y_{(r-1)\ell_{r,j}}$ be the Schreier decomposition from {\rm(a)}. If $(y'_1,y'_2, \dots, y'_{(r-1)\ell_{r,j}})$ is a strong $Y_R$-rotation of $(y_1,y_2 \dots, y_{(r-1)\ell_{r,j}})$ whose length is a multiple of $r-1$, then there exists some $p \in e_{r,j}$ such that 
			\begin{center}
				$\delta_p (\gamma_r^{-1})^{\ell_{r,j}} \delta_p^{-1} = y'_1y'_2 \dots y'_{(r-1)\ell_{r,j}}$. 
			\end{center}
		\end{enumerate}
	\end{proposition}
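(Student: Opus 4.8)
The strategy is to run, for $\gamma_r$, the telescoping construction used in \ref{SchreierBasis} and Proposition~\ref{productrinR} for $\gamma_1,\dots,\gamma_{r-1}$, exploiting the relation $\prod_{i=1}^{r}\gamma_i=1$. Under the identification $\pi_1(Z,z)\isom F(\gamma_1,\dots,\gamma_{r-1})$ we have $\gamma_r^{-1}=\gamma_1\gamma_2\cdots\gamma_{r-1}$, so $(\gamma_r^{-1})^{\ell_{r,j}}$ is the word $\gamma_{c_1}\gamma_{c_2}\cdots\gamma_{c_L}$, where $L=(r-1)\ell_{r,j}$ and $c_m\in\{1,\dots,r-1\}$ is determined by $c_m\equiv m\pmod{r-1}$; in particular every $\gamma_{c_m}$ lies in $S$. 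Since $t\in e_{r,j}$ and $e_{r,j}$ is a cycle of $\tau(\gamma_r)$ of length $\ell_{r,j}$, the monodromy of $(\gamma_r^{-1})^{\ell_{r,j}}$ fixes $t$, so (as in \ref{SchreierBasis}) $h_t:=\delta_t(\gamma_r^{-1})^{\ell_{r,j}}\delta_t^{-1}\in H$. For (a), set $q_0=\delta_t$ and $q_m=\rho(q_{m-1}\gamma_{c_m})$ for $m=1,\dots,L$, so $Hq_m=H\delta_t\gamma_{c_1}\cdots\gamma_{c_m}$ for all $m$; in particular $Hq_L=H\delta_t(\gamma_r^{-1})^{\ell_{r,j}}=Hh_t\delta_t=H\delta_t$, and since $q_L,\delta_t\in R$ we get $q_L=q_0=\delta_t$. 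Telescoping,
$$h_t=\delta_t\,\gamma_{c_1}\gamma_{c_2}\cdots\gamma_{c_L}\,\delta_t^{-1}=\prod_{m=1}^{L}\bigl(q_{m-1}\gamma_{c_m}q_m^{-1}\bigr)=\prod_{m=1}^{L}\phi_R(q_{m-1},\gamma_{c_m}),$$
and by the description of $Y_R$ in \ref{schreierConstruction} each $y_m:=\phi_R(q_{m-1},\gamma_{c_m})$ lies in $Y_R\cup\{1\}$; this is the Schreier decomposition of $h_t$ of \ref{SchreierRewriting}, which proves (a).

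For (b), suppose $y_m=y_{m'}\neq 1$ with $m\le m'$. By Lemma~\ref{stupidBijection}, $(q_{m-1},\gamma_{c_m})=(q_{m'-1},\gamma_{c_{m'}})$, so $\gamma_{c_m}=\gamma_{c_{m'}}$ forces $c_m=c_{m'}$ and hence $m'=m+d(r-1)$ for some integer $d\ge 0$, while also $q_{m-1}=q_{m'-1}$. Because the index sequence $(c_\nu)$ has period $r-1$, one has $\gamma_{c_m}\gamma_{c_{m+1}}\cdots\gamma_{c_{m'-1}}=w^{-1}(\gamma_r^{-1})^{d}w$ with $w=\gamma_1\cdots\gamma_{c_m-1}$, and a short computation from $Hq_{m-1}=H\delta_t\gamma_{c_1}\cdots\gamma_{c_{m-1}}$ then shows that $q_{m-1}=q_{m'-1}$ is equivalent to $\delta_t\gamma_r^{d}\delta_t^{-1}\in H$, i.e.\ to $\tau(\gamma_r)^{d}$ fixing $t$. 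As $t$ lies in a cycle of $\tau(\gamma_r)$ of length $\ell_{r,j}$, this forces $\ell_{r,j}\mid d$; but $m'=m+d(r-1)\le L=(r-1)\ell_{r,j}$ with $m\ge 1$ forces $d<\ell_{r,j}$, so $d=0$ and $m=m'$, proving (b).

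For (c), running the construction of (a) with $\delta_p$ in place of $\delta_t$ produces $q'_0=\delta_p$, $q'_m=\rho(q'_{m-1}\gamma_{c_m})$ and $y'_m=\phi_R(q'_{m-1},\gamma_{c_m})$. Put $D=d_{t,p}(r-1)$. Since $p$ is reached from $t$ by $d_{t,p}$ steps of the monodromy of $\gamma_r$, one checks, using the coset--point correspondence, that $H\delta_p=Hq_{L-D}$; as $(c_\nu)$ has period $r-1$ and $r-1\mid D$, an easy induction then gives $q'_m=q_{m-D}$ for all $m$, with subscripts read modulo $L$ (so that $q_0=q_L$). Hence $y'_m=\phi_R(q_{m-1-D},\gamma_{c_m})=\phi_R(q_{m-1-D},\gamma_{c_{m-D}})=y_{m-D}$, i.e.\ $y_i=y'_{i+D}$ for all $i$; thus $(y'_1,\dots,y'_L)$ is the strong $Y_R$-rotation of $(y_1,\dots,y_L)$ of length $D=d_{t,p}(r-1)$, which is (c). For (d), given a strong $Y_R$-rotation $(y'_i)$ of $(y_i)$ of length $k(r-1)$, replace $k$ by its residue modulo $\ell_{r,j}$ (a strong rotation of a length-$L$ tuple depends only on the length modulo $L=\ell_{r,j}(r-1)$), so $0\le k<\ell_{r,j}$, and let $p\in e_{r,j}$ be the point reached from $t$ by $k$ monodromy steps of $\gamma_r$, so $d_{t,p}=k$. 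By (c), the decomposition of (a) applied to $\delta_p$ equals the strong $Y_R$-rotation of $(y_i)$ of length $d_{t,p}(r-1)=k(r-1)$, which, by uniqueness of such a rotation, is $(y'_i)$; hence $\delta_p(\gamma_r^{-1})^{\ell_{r,j}}\delta_p^{-1}=y'_1y'_2\cdots y'_{(r-1)\ell_{r,j}}$, proving (d).

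Parts (a) and (b) are routine telescoping and counting arguments modelled on Proposition~\ref{productrinR}; the step that needs real care is the identity $H\delta_p=Hq_{L-D}$ in (c), i.e.\ checking that replacing $\delta_t$ by $\delta_p$ with $p$ in the same cycle $e_{r,j}$ shifts the entire telescoping sequence $(q_m)$ --- and hence $(y_m)$ --- by exactly $d_{t,p}(r-1)$ positions, rather than by its negative or by $(\ell_{r,j}-d_{t,p})(r-1)$, which amounts to reconciling the orientation convention used for the monodromy $\tau$ with the one used in the definition of the distance $d_{t,p}$.
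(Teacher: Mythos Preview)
Your argument is correct and follows essentially the same strategy as the paper: the telescoping Schreier decomposition for (a), Lemma~\ref{stupidBijection} plus the cycle-length obstruction for (b), and a shift computation along the cycle $e_{r,j}$ for (c) and (d). The only stylistic difference is that you handle (c) in one stroke by establishing $q'_m=q_{m-D}$ for the general shift $D=d_{t,p}(r-1)$, whereas the paper treats the adjacent case $e_{r,j}=(tp\dots)$ first and iterates; your closing remark about the orientation check $H\delta_p=Hq_{L-D}$ is well taken, as this is exactly the point where a sign slip is easy.
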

	\begin{proof}
		We prove (a). Since $\tau(\delta_t (\gamma_r^{-1})^{\ell_{r,j}} \delta_t^{-1}) \in \Stab(1)$, it follows that $\delta_t (\gamma_r^{-1})^{\ell_{r,j}} \delta_t^{-1} \in H$ and can be written as a product of elements in $Y_R \cup Y_R^{-1} \cup \{1\}$. Let $p_1 = \delta_t$ and for each $m = 1, \dots, (r-1)\ell_{r,j} $ define $p_{m+1} = \rho(p_m \gamma_{m'})$ where $m' \in \{1, \dots, r-1\}$ is congruent to $m \mod r-1$. Then write 
		\begin{align*}
			\delta_t (\gamma_r^{-1})^{\ell_{r,j}} \delta_t^{-1} & = \delta_t (\gamma_1 \gamma_2 \dots \gamma_{r-1})^{\ell_{r,j}}\delta_t^{-1} \\
			&=\underbrace{[p_1 \gamma_1 p_2^{-1}]}_{y_1}\underbrace{[p_2 \gamma_2 p_3^{-1}]}_{y_2}\dots\underbrace{[p_{r-1} \gamma_{r-1} p_{r-1 + 1}^{-1}]}_{y_{r-1}}\underbrace{[p_{r-1+1} \gamma_1 p_{r-1 + 2}^{-1}]}_{y_r} \dots \underbrace{[p_{(r-1)\ell_{r,j}} \gamma_{r-1} p_1^{-1}]}_{y_{(r-1)\ell_{r,j}}} 
		\end{align*}    
		Define $y_m = p_m \gamma_m' p_{m+1}^{-1}$ (noting that $p_{(r-1)\ell_{r,j} +1} = p_1$). Since $p_m \in R$ for all $m$ it follows that each element $y_m$ is either the identity or an element of $Y_R$. This proves (a). 
		
		To prove (b), it suffices to show 
		\begin{equation}\label{hdistinct}
			\text{if $1 \leq \alpha < \beta \leq (r-1)\ell_{r,j}$ and $y_\alpha$, $y_\beta$ are not both 1, then $y_\alpha \neq y_\beta$.}
		\end{equation}
		Thus, we assume $1 \leq \alpha < \beta \leq (r-1)\ell_{r,j}$. First, if exactly one of $y_\alpha$ or $y_\beta$ equals $1$, the proof is complete, so we may assume both $y_\alpha,y_\beta \neq 1$. Assume for the sake of contradiction that $y_\alpha = y_\beta$ and write $y_\alpha = p_\alpha\gamma_{\alpha'}\rho(p_\alpha\gamma_{\alpha'})^{-1} = \phi_R(p_\alpha,\gamma_{\alpha'})$ and $y_\beta = p_\beta\gamma_{\beta'}\rho(p_\beta\gamma_{\beta'})^{-1} = \phi_R(p_\beta,\gamma_{\beta'})$. Since 
		$$\phi_R(p_\alpha,\gamma_{\alpha'}) = p_\alpha\gamma_{\alpha'}\rho(p_\alpha\gamma_{\alpha'})^{-1} = y_\alpha = y_\beta = p_\beta\gamma_{\beta'}\rho(p_\beta\gamma_{\beta'})^{-1} = \phi_R(p_\beta,\gamma_{\beta'}),$$
		it follows from Lemma \ref{stupidBijection} that $p_\alpha = p_\beta$ and $\gamma_{\alpha'} = \gamma_{\beta'}$. Since $\alpha < \beta$ and $\alpha' = \beta'$ we must have $\beta = \alpha + k(r-1)$ for some $k < \ell_{r,j}$. Let $\mu = (r-1)\ell_{r,j} - \beta$. Then, since $y_\alpha = y_\beta$, it follows that $y_{\alpha+\mu} = y_{\beta+ \mu} = y_{(r-1)\ell_{r,j}}$ which is the last element in the product $y_1y_2 \dots y_{(r-1)\ell_{r,j}}$. Let $z = \frac{\alpha+\mu}{r-1}$ and note that $z$ is a natural number less than $\ell_{r,j}$. Since the product
		$y_1y_2 \dots y_{\alpha+\mu} = \delta_t (\gamma_1\gamma_2\dots \gamma_{r-1})^{z}\delta_t^{-1} \in H = \tau^{-1}\Stab(1)$, it follows that $\tau(\gamma_1\gamma_2\dots \gamma_{r-1})^{z} \in \Stab(t)$. But this is a contradiction to the fact that $\tau(\gamma_1 \gamma_2 \dots \gamma_{r-1})$ is a cycle of length $\ell_{r,j} > z$. We conclude that \eqref{hdistinct} holds, completing the proof of (b). 
		
		We prove (c). When $t = p$ the result follows from Remark \ref{SchreierUnique}. We first prove the special case where the cycle $e_{r,j}$ has form $(tp \dots)$. By part (a), we have  
		$$\delta_t (\gamma_r^{-1})^{\ell_{r,j}} \delta_t^{-1} = y_1y_2 \dots y_{(r-1)\ell_{r,j}} \text{ where $y_i \in Y_R \cup \{1\}$ for all $i$} $$
		and 
		$$\delta_p (\gamma_r^{-1})^{\ell_{r,j}} \delta_p^{-1} = y'_1y'_2 \dots y'_{(r-1)\ell_{r,j}} \text{ where $y'_i \in Y_R \cup \{1\}$ for all $i$.}$$
		Using that $y_m = p_m \gamma_m' p_{m+1}$ where $p_m$ is defined as in (a), it can be checked that $y_r = y'_1$ and that $y_{(r-1)+i} = y'_i$ for all $i = 1, \dots (r-1)\ell_{r,j}$ where the indices are taken $\mod (r-1)\ell_{r,j}$. This proves the result in the special case where $e_{r,j}$ has form $(tp \dots)$. The general case follows by repeatedly applying this special case. This proves (c). 
		
		We prove (d). Assume that $(y'_1,y'_2, \dots, y'_{(r-1)\ell_{r,j}})$ is a strong $Y_R$-rotation of $(y_1,y_2, \dots, y_{(r-1)\ell_{r,j}})$ of length $d(r-1)$ for some $d \geq 0$. Without loss of generality, we may assume $0 \leq d < \ell_{r,j}$. Let $p$ denote the unique element of $e_{r,j}$ such that $d_{n,p} = d$. Then by (c), $y'_1y'_2 \dots y'_{(r-1)\ell_{r,j}} = \delta_p (\gamma_r^{-1})^{\ell_{r,j}} \delta_p^{-1}$.      
	\end{proof}
	
	\begin{notation}\label{extraIndex}
		Note that in the notation of Proposition \ref{mainProp} (a), the elements $y_1, y_2, \dots, y_{(r-1)\ell_{r,j}}$ depend on $j$. In what follows, and in Table 2 below, $j$ is variable and so we write $y_{j,1}, y_{j,2}, \dots, y_{j, (r-1)\ell_{r,j}}$ instead of $y_1, y_2, \dots, y_{(r-1)\ell_{r,j}}$ to express the decomposition in Proposition \ref{mainProp} (a) for each $j = 1, \dots, k_r$. 
	\end{notation}
	\begin{definition}\label{YjNotation}
		For each $j \in \{1, \dots, k_r\}$, let $t \in e_{r,j}$. We define the \textit{multiset}
		$$\bar{Y}_j = \left[y_{j,1}, y_{j,2}, \dots, y_{j,(r-1)\ell_{r,j}}\right]$$ 
		where $y_{j,1}, \dots, y_{j,(r-1)\ell_{r,j}}$ are the $(r-1)\ell_{r,j}$ elements defined in Proposition \ref{mainProp} (a) (as explained in Notation \ref{extraIndex}).  Note that by Proposition \ref{mainProp} (c), both $\bar{Y}_j$ and $Y_j$ are independent of the choice of $t$ in $e_{r,j}$. Then define $Y_j = \bar{Y}_j \setminus \{1\}$ (as a set).  
	\end{definition}
	
	\begin{nothing}
		We now construct two tables, each with dimensions $n \times (r-1)$. Table 1 consists of elements $\phi_R(r,s)$ where $r \in R, s \in S$. Choosing an ordering of $R$ and setting $r_1 = 1$, the $(i,j)^{th}$ entry of Table 1 is given by $r_i \gamma_j \rho(r_i\gamma_j)^{-1}$. We obtain
		\begin{center}
			\textbf{Table 1}
		\end{center}
		\begin{center}      
			\begin{tabular}{ |c|c|c|c| } 
				\hline
				$1 \gamma_1 \rho(\gamma_1)^{-1}$ & $1 \gamma_2 \rho(\gamma_2)^{-1}$ & $\dots$ &  $1 \gamma_{r-1} \rho(\gamma_{r-1})^{-1}$ \\ \hline
				$r_2 \gamma_1 \rho(r_2\gamma_1)^{-1}$ & $r_2 \gamma_2 \rho(r_2 \gamma_2)^{-1}$ & $\dots$ &  $r_2 \gamma_{r-1} \rho(r_2 \gamma_{r-1})^{-1}$ \\ \hline
				$r_3 \gamma_1 \rho(r_3 \gamma_1)^{-1}$ & $r_3 \gamma_2 \rho(r_3 \gamma_2)^{-1}$ & $\dots$ &  $r_3 \gamma_{r-1} \rho(r_3 \gamma_{r-1})^{-1}$ \\ \hline
				$\dots$ & $\dots$ & $\dots$ & $\dots$ \\ \hline
				$r_n \gamma_1 \rho(r_n \gamma_1)^{-1}$ & $r_n \gamma_2 \rho(r_n \gamma_2)^{-1}$ & $\dots$ &  $r_n \gamma_{r-1} \rho(r_n \gamma_{r-1})^{-1}$ \\ 
				\hline
			\end{tabular}
		\end{center}
		\bigskip
		We construct a second table (Table 2) with the same dimensions as Table 1. Starting from the top left and filling the rows from left to right, we list the elements $y_{1,1}, y_{1,2}, \dots, y_{1,\ell_{r,1}(r-1)}$, then $y_{2,1}, y_{2,2}, \dots, y_{2,\ell_{r,2}(r-1)}$, through to  $y_{k_r, 1},y_{k_r, 2}, \dots, y_{k_r, (\ell_{k_r})(r-1)}$. Each multiset $\bar{Y}_j$ consists of the elements in $\ell_{r,j}$ rows of the table; $\bar{Y}_1$ consists of entries in the first $\ell_{r,1}$ rows, $\bar{Y}_2$ consists of entries in the next $\ell_{r,2}$ rows, and so forth. We obtain
		\begin{center}
			\textbf{Table 2}
		\end{center}
		
		\begin{center}
			\begin{tabular}{ c|c|c|c|c| } 
				\cline{2-5}
				\multirow{4}{*}{$\bar{Y}_1$\Bigg\{} & $y_{1,1}$ & $y_{1,2}$ & $\dots$ &  $ y_{1,r-1}$ \\ \cline{2-5}
				& $y_{1,(r-1)+1}$ & $y_{1,(r-1)+2}$ & $\dots$ &  $y_{1,2(r-1)}$ \\ \cline{2-5}
				& $\dots$ & $\dots$ & $\dots$ &  $\dots$ \\ \cline{2-5}
				& $y_{1,(\ell_{r,1}-1)(r-1) + 1}$ & $y_{1,(\ell_{r,1}-1)(r-1) + 2}$ & $\dots$ &  $y_{1,(\ell_{r,1})(r-1)}$ \\ \cline{2-5}
				
				\cline{2-5}
				\multirow{4}{*}{$\bar{Y}_2$\Bigg\{} & $y_{2,1}$ & $y_{2,2}$ & $\dots$ &  $ y_{2,r-1}$ \\ \cline{2-5}
				& $y_{2,(r-1)+1}$ & $y_{2,(r-1)+2}$ & $\dots$ &  $y_{2,2(r-1)}$ \\ \cline{2-5}
				& $\dots$ & $\dots$ & $\dots$ &  $\dots$ \\ \cline{2-5}
				& $y_{2,(\ell_{r,2}-1)(r-1) + 1}$ & $y_{2,(\ell_{r,2}-1)(r-1) + 2}$ & $\dots$ &  $y_{2,(\ell_{r,2})(r-1)}$ \\ \cline{2-5}
				
				\multirow{2}{*}{$\vdots$} & $\dots$ & $\dots$ & $\dots$ & $\dots$ \\ \cline{2-5}
				& $\dots$ & $\dots$ & $\dots$ &  $\dots$ \\ \cline{2-5}
				
				\cline{2-5}
				\multirow{4}{*}{$\bar{Y}_{k_r}$\Bigg\{} & $y_{k_r,1}$ & $y_{k_r,2}$ & $\dots$ &  $ y_{k_r,r-1}$ \\ \cline{2-5}
				& $y_{k_r,(r-1)+1}$ & $y_{k_r,(r-1)+2}$ & $\dots$ &  $y_{k_r,2(r-1)}$ \\ \cline{2-5}
				& $\dots$ & $\dots$ & $\dots$ &  $\dots$ \\ \cline{2-5}
				& $y_{k_r,(\ell_{r,k_r}-1)(r-1) + 1}$ & $y_{k_r,(\ell_{r,k_r}-1)(r-1) + 2}$ & $\dots$ &  $y_{k_r,(\ell_{r,k_r})(r-1)}$ \\ \cline{2-5}
				
			\end{tabular}
		\end{center}
	\end{nothing}
	
	\begin{proposition}\label{columnPermutationClaim}
		For every $c = 1, \dots, r-1$, the entries in column $c$ of Table 2 are a permutation of those in column $c$ of Table 1.
	\end{proposition}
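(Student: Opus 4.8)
The plan is to pin down which Schreier generator occupies each cell of the two tables, and then to reduce the statement to a bijectivity claim about right cosets that is essentially the cycle-walk already carried out in the proof of Proposition \ref{productrinR}.

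First I would record the cells. By construction the entry of Table 1 in row $i$, column $c$ is $\phi_R(r_i,\gamma_c)$, where $r_1=1,r_2,\dots,r_n$ is the fixed ordering of $R$; thus column $c$ of Table 1 is the length-$n$ list $\big(\phi_R(x,\gamma_c)\big)_{x\in R}$ with $x$ running through $R$ in the order $r_1,\dots,r_n$. For Table 2, recall from the proof of Proposition \ref{mainProp}(a) that for each $j\in\{1,\dots,k_r\}$ one picks $t_j\in e_{r,j}$ and $\delta_{t_j}\in R$ with $\tau(\delta_{t_j})(1)=t_j$, sets $p^{(j)}_1=\delta_{t_j}$ and $p^{(j)}_{m+1}=\rho\big(p^{(j)}_m\gamma_{m'}\big)$, where $m'\in\{1,\dots,r-1\}$ is the residue of $m$ modulo $r-1$, so that $y_{j,m}=\phi_R\big(p^{(j)}_m,\gamma_{m'}\big)$. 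Since Table 2 is filled left to right, one full row ($r-1$ cells) at a time, the cell in block $j$, row $k$ (with $1\le k\le\ell_{r,j}$), column $c$ equals $y_{j,\,(k-1)(r-1)+c}$, and because the residue of $(k-1)(r-1)+c-1$ modulo $r-1$ is $c-1$, this cell is $\phi_R\big(p^{(j)}_{(k-1)(r-1)+c},\gamma_c\big)$. Collapsing the recursion for $p^{(j)}_m$ by repeated use of property (3a) of Definition \ref{rightRepresentatives}, together with $\gamma_1\gamma_2\cdots\gamma_{r-1}=\gamma_r^{-1}$, gives
$$p^{(j)}_{(k-1)(r-1)+c}=\rho\big(\delta_{t_j}\,\gamma_r^{-(k-1)}\,\gamma_1\gamma_2\cdots\gamma_{c-1}\big).$$
So the proposition reduces to showing that, for each fixed $c\in\{1,\dots,r-1\}$, the map
$$\Psi_c\colon\ \big\{(j,k):1\le j\le k_r,\ 1\le k\le\ell_{r,j}\big\}\longrightarrow R,\qquad (j,k)\longmapsto\rho\big(\delta_{t_j}\,\gamma_r^{-(k-1)}\,\gamma_1\cdots\gamma_{c-1}\big),$$
is a bijection: if so, column $c$ of Table 2 is exactly the list $\big(\phi_R(x,\gamma_c)\big)_{x\in R}$ in a rearranged order, i.e.\ a permutation of column $c$ of Table 1.

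To prove $\Psi_c$ bijective, note that its domain has $\sum_{j=1}^{k_r}\ell_{r,j}=n=|R|$ elements, since the cycles $e_{r,1},\dots,e_{r,k_r}$ of $\sigma_r=\tau(\gamma_r)$ partition $\{1,\dots,n\}$; hence it suffices to show $\Psi_c$ is injective. By (3a) again, $\Psi_c(j,k)=\rho\big(\rho(\delta_{t_j}\gamma_r^{-(k-1)})\cdot\gamma_1\cdots\gamma_{c-1}\big)$, and by Remark \ref{FpermutesR} the map $r\mapsto\rho(r\gamma_1\cdots\gamma_{c-1})$ is a permutation of $R$; so $\Psi_c$ is injective precisely when $(j,k)\mapsto\rho(\delta_{t_j}\gamma_r^{-(k-1)})$ is. For this last map I would repeat the coset walk from the proof of Proposition \ref{productrinR}: the assignment $r\mapsto\tau(r)(1)$ is a bijection $R\to\{1,\dots,n\}$ (this is how $f^{-1}(z)$ is matched with the right cosets; it follows from the partition $R=\bigsqcup_j R_{r,j}$ of \ref{SchreierBasis} together with Proposition \ref{productrinR}), and for every $g\in F$ one has $\tau\big(\rho(rg)\big)(1)=\tau(g)\big(\tau(r)(1)\big)$, because $\rho(rg)\in Hrg$ and $\tau(H)\subseteq\Stab(1)$. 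Taking $g=\gamma_r^{-1}$ and iterating yields
$$\tau\big(\rho(\delta_{t_j}\gamma_r^{-(k-1)})\big)(1)=\sigma_r^{-(k-1)}(t_j).$$
For fixed $j$, as $k$ ranges over $1,\dots,\ell_{r,j}$ the points $\sigma_r^{-(k-1)}(t_j)$ range over the entire cycle $e_{r,j}$ without repetition (that cycle has length $\ell_{r,j}$), and for distinct $j$ the cycles $e_{r,j}$ are disjoint; so $(j,k)\mapsto\tau\big(\rho(\delta_{t_j}\gamma_r^{-(k-1)})\big)(1)$ is injective, and composing with the bijection $R\to\{1,\dots,n\}$ shows $(j,k)\mapsto\rho(\delta_{t_j}\gamma_r^{-(k-1)})$ is injective. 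This makes $\Psi_c$ bijective and finishes the argument.

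I expect the real difficulty to be bookkeeping rather than ideas: one must align the cyclic index pattern $\gamma_1,\gamma_2,\dots,\gamma_{r-1},\gamma_1,\dots$ of the Schreier decomposition of $\delta_{t_j}(\gamma_r^{-1})^{\ell_{r,j}}\delta_{t_j}^{-1}$ in Proposition \ref{mainProp}(a) with the row-by-row filling of Table 2, so as to recognize that the column-$c$ cells of block $j$ are governed by the single word $\gamma_1\cdots\gamma_{c-1}$ and the powers $\gamma_r^{-(k-1)}$. Once that identification is in place, the remaining content is the count $\sum_j\ell_{r,j}=n$ together with a verbatim reuse of the single-cycle computation already performed for each $\gamma_i$ ($i\le r-1$) in Proposition \ref{productrinR} and Corollary \ref{firstDecomposition}, now applied to $\gamma_r^{-1}$ and the cycle decomposition of $\sigma_r$.
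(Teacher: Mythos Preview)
Your argument is correct and follows essentially the same approach as the paper: both identify the $R$-element governing each cell of Table~2, use the cycle structure of $\sigma_r$ (tracking $\tau(\,\cdot\,)(1)$) to show these elements exhaust $R$, and invoke Remark~\ref{FpermutesR} to pass from one column to the next. The only difference is organizational: the paper treats $c=1$ first via the cycle walk and then bootstraps to $c\ge 2$ using Remark~\ref{FpermutesR}, whereas you derive the closed formula $p^{(j)}_{(k-1)(r-1)+c}=\rho(\delta_{t_j}\gamma_r^{-(k-1)}\gamma_1\cdots\gamma_{c-1})$ up front and handle all columns uniformly.
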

	\begin{proof}
		We first prove Proposition \ref{columnPermutationClaim} for the first column (i.e. the $c = 1$ case). We can express the element in the $k^{th}$ row of the first column of Table 2 as $s_k \gamma_1 \rho(s_k \gamma_1)^{-1}$ where $k \in \{1, \dots, n\}$ and $s_k \in R$. It now suffices to show that the elements $s_k$ for $k = 1, \dots, n$ are distinct.
		
		We proceed as in the $j = 1$ case of Proposition \ref{mainProp}. Express $\tau(\gamma_r^{-1})$ as a product of disjoint cycles $e_1e_2\dots, e_{k_r}$ and let $e_1 = (t_1 t_2 \dots t_{\ell_{r,1}})$. Choose $\delta_{t_1} \in R$ such that $\tau(\delta_{t_1})$ maps $1$ to $t_1$. Then by the construction of Table 2, $s_1 = \delta_{t_1}$. It can be checked that $\tau(s_2)$ maps $1$ to $t_2$, $\tau(s_3)$ maps $1$ to $t_3$ and more generally that $\tau(s_m)$ maps $1$ to $t_m$ for each $m = 1, \dots, \ell_{r,1}$. Repeating this argument for each cycle in the decomposition of $\tau(\gamma_r^{-1})$, we find that $\tau(s_k)$ is different for each $k = 1, \dots, n$. Consequently, the elements $s_k$ are distinct for $k = 1, \dots, n$. This proves the $c = 1$ case of the claim. 
		
		For the $c = 2$ case, observe that the element in the $k^{th}$ row of the second column of Table 2 is $\rho(r_k \gamma_1)\gamma_2 \rho(\rho(r_k \gamma_1)\gamma_2)^{-1}$. Since $\setspec{r_k}{k = 1, \dots, n} = R$, Remark \ref{FpermutesR} implies that $\setspec{\rho(r_k \gamma_1)}{k=1, \dots, n} = R$. It follows that the second column in Table 2 is a permutation of the second column in Table 1. Repeating this argument for $c = 3, \dots, r-1$ proves the proposition.
	\end{proof}
	
	Recall the definition of $\bar{Y}_R$ from \ref{schreierConstruction}. 
	\begin{corollary}\label{tablePermutation}
		There is an equality of multisets $\bar{Y}_R = \bar{Y}_1 \cup \bar{Y}_2 \cup \dots \cup \bar{Y}_{k_r}$.
	\end{corollary}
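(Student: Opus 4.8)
The plan is to recognize that each side of the claimed equality is nothing but the multiset of \emph{all} entries of one of the two $n \times (r-1)$ tables, and then to quote Proposition~\ref{columnPermutationClaim} to compare the two tables column by column.

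First I would record what the entries of Table~1 are: with the ordering of $R$ fixed and $r_1 = 1$, the $(i,c)$-th entry is $\phi_R(r_i,\gamma_c) = r_i\gamma_c\rho(r_i\gamma_c)^{-1}$. Hence the multiset of all $n(r-1)$ entries of Table~1 is $\big[\,\phi_R(r,s) \mid r \in R,\ s \in S\,\big]$, which by the definition of $\bar{Y}_R$ in \ref{schreierConstruction} equals $\bar{Y}_R$. Next I would do the same for Table~2: by its construction, together with Notation~\ref{extraIndex} and Definition~\ref{YjNotation}, the entries lying in the block of $\ell_{r,j}$ rows labelled $\bar{Y}_j$ are exactly $y_{j,1}, y_{j,2}, \dots, y_{j,(r-1)\ell_{r,j}}$, whose multiset is precisely $\bar{Y}_j$. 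Since these blocks partition the $n$ rows of Table~2, the multiset of all entries of Table~2 is $\bar{Y}_1 \cup \bar{Y}_2 \cup \dots \cup \bar{Y}_{k_r}$.

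Finally, Proposition~\ref{columnPermutationClaim} states that for each $c \in \{1,\dots,r-1\}$ the $n$ entries in column $c$ of Table~2 form the same multiset as the $n$ entries in column $c$ of Table~1. Taking the multiset union over $c = 1, \dots, r-1$, the multiset of all entries of Table~2 coincides with the multiset of all entries of Table~1. Chaining this with the two computations of the previous paragraph yields $\bar{Y}_1 \cup \bar{Y}_2 \cup \dots \cup \bar{Y}_{k_r} = \bar{Y}_R$, which is the assertion.

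I do not expect a genuine obstacle here: the substantive content has already been extracted into Proposition~\ref{columnPermutationClaim}, and what remains is purely bookkeeping. The only points needing care are the identifications made implicitly in the two tabular displays — namely that the $j$-th row-block of Table~2 has exactly $\ell_{r,j}$ rows (so that its entries really are the multiset $\bar{Y}_j$ of Definition~\ref{YjNotation}), and that ``column $c$ of Table~1'' is the list $\{\phi_R(r,\gamma_c) : r \in R\}$ under the fixed labelling $r_1 = 1$. Once these are spelled out, the corollary is immediate.
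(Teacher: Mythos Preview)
Your proposal is correct and follows essentially the same approach as the paper: identify the multiset of entries of Table~1 as $\bar{Y}_R$, the multiset of entries of Table~2 as $\bar{Y}_1 \cup \dots \cup \bar{Y}_{k_r}$, and invoke Proposition~\ref{columnPermutationClaim} to equate them. The paper's proof is simply a terser version of what you wrote.
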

	\begin{proof}
		For each $i = 1,2$ let $T_i$ be the multiset of elements in Table $i$. Then, $\bar{Y}_R = T_1 = T_2 = \bar{Y}_1 \cup \bar{Y}_2 \cup \dots \cup \bar{Y}_{k_r}$, 
		where the first and third equalities are by construction and the middle equality is by Proposition \ref{columnPermutationClaim}. 
	\end{proof}

	\begin{proposition}\label{partitionResult}
		Let $i,j \in \{1, \dots, k_r\}$.
		\begin{enumerate}[\rm(a)]
			\item If $i \neq j$, then $Y_i \cap Y_j = \emptyset$. 
			\item There is a partition of sets $Y_R = Y_1 \sqcup \dots \sqcup Y_{k_r}$
		\end{enumerate}
	\end{proposition}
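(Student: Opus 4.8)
The plan is to derive both parts from a single multiplicity count, playing off the multiset identity $\bar{Y}_R = \bar{Y}_1 \cup \dots \cup \bar{Y}_{k_r}$ of Corollary \ref{tablePermutation} against the ``no repeated non-identity entries'' facts already in hand. Concretely, I would first isolate two ingredients. On the one hand, by Lemma \ref{stupidBijection} together with the rank formula recalled in \ref{schreierConstruction}, the map $\phi_R$ is a bijection from $(R\times S)\setminus\phi_R^{-1}\{1\}$ onto $Y_R$; hence in the multiset $\bar{Y}_R = [\phi_R(r,s) \mid r \in R, s \in S]$ every element of $Y_R$ occurs with multiplicity exactly $1$ (and $1$ occurs with multiplicity $n-1$). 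On the other hand, Proposition \ref{mainProp}(b) says that for each $j$ no element of $Y_R$ occurs more than once in $\bar{Y}_j$; equivalently, the multiplicity in $\bar{Y}_j$ of any $y \in Y_R$ lies in $\{0,1\}$, and $\bar{Y}_j$ with its copies of $1$ removed is exactly $Y_j$ with every element of multiplicity $1$.

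For part (a), I would fix $y \in Y_R$ and compare multiplicities on the two sides of $\bar{Y}_R = \bigcup_{j=1}^{k_r} \bar{Y}_j$. Writing $m_j(y) \in \{0,1\}$ for the multiplicity of $y$ in $\bar{Y}_j$, this gives $1 = \sum_{j=1}^{k_r} m_j(y)$, so exactly one $m_j(y)$ equals $1$; that is, $y$ belongs to exactly one of $Y_1, \dots, Y_{k_r}$. In particular $Y_i \cap Y_j = \emptyset$ for $i \neq j$, which is (a).

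For part (b), the same count shows $Y_R \subseteq Y_1 \sqcup \dots \sqcup Y_{k_r}$ (each $y \in Y_R$ lands in some $Y_j$). The reverse inclusion is immediate: if $y \in Y_j$ then $y$ appears in $\bar{Y}_j$, hence in $\bar{Y}_R$, and since $y \neq 1$ we get $y \in Y_R$. Combining this with the disjointness established in (a) yields the partition $Y_R = Y_1 \sqcup \dots \sqcup Y_{k_r}$.

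The argument is short, so the ``main obstacle'' is really just hygiene rather than a genuine difficulty: I would take care that the multiplicity-one claim for $\bar{Y}_R$ is correctly attributed to Lemma \ref{stupidBijection} plus the rank formula (not merely to the definition of $\bar{Y}_R$), that Proposition \ref{mainProp}(b) is being used in the form ``multiplicity at most $1$ in each $\bar{Y}_j$'', and that the copies of the identity element are discarded consistently when passing from each $\bar{Y}_j$ to $Y_j$ and from $\bar{Y}_R$ to $Y_R$. Everything else is a direct comparison of multiplicities.
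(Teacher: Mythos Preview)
Your argument is correct and is in fact cleaner than the paper's. The paper proves (a) by a direct contradiction argument: assuming some non-identity element $y_\alpha = k_\beta$ appears in both $\bar Y_1$ and $\bar Y_2$, it invokes Lemma~\ref{stupidBijection} to match up the corresponding $(r,\gamma)$-pairs, then chases indices to show that the two Schreier decompositions must be strong $Y_R$-rotations of one another of length a multiple of $r-1$, and finally appeals to Proposition~\ref{mainProp}(d) to force the two base points into the same cycle of $\sigma_r$, a contradiction. Only afterwards does the paper cite Corollary~\ref{tablePermutation} together with (a) to deduce (b).

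You instead front-load Corollary~\ref{tablePermutation} and extract both (a) and (b) at once from a single multiplicity count: each $y\in Y_R$ has multiplicity exactly $1$ in $\bar Y_R$ (Lemma~\ref{stupidBijection}) and multiplicity at most $1$ in each $\bar Y_j$ (Proposition~\ref{mainProp}(b)), so the multiset identity forces it to lie in exactly one $Y_j$. This bypasses the rotation machinery of Proposition~\ref{mainProp}(c),(d) entirely. The paper's route is more hands-on and makes the cycle structure visible; yours is shorter and shows that the full strength of Proposition~\ref{mainProp} is not needed for this step.
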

	\begin{proof}
		We prove (a). If $k_r = 1$, the result is vacuously true, so we assume that $k_r \geq 2$. Since we have written $\sigma_r$ as a product of disjoint cycles, the order of the cycles is irrelevant. Thus, to prove (a), it suffices to consider the cycles $e_{r,1}$ and $e_{r,2}$ and prove that the sets $Y_1$ and $Y_2$ are disjoint. 
		
		Without loss of generality, we may assume $\ell_{r,1} \leq \ell_{r,2}$ (that is, the shorter cycle appears first). Choose some element $n \in e_{r,1}$ and $m \in e_{r,2}$. By Proposition \ref{mainProp} (a), we write 
		\begin{equation}\label{decomp}
			\delta_n (\gamma_r^{-1})^{\ell_{r,1}}\delta_n^{-1} = y_1y_2 \dots y_{(r-1)\ell_{r,1}} \text{ and } \delta_m (\gamma_r^{-1})^{\ell_{r,2}}\delta_m^{-1} = k_1k_2 \dots k_{(r-1)\ell_{r,2}}.
		\end{equation}
		where $\delta_m, \delta_n \in R$, $\tau(\delta_m)$ maps 1 to $m$, $\tau(\delta_n)$ maps 1 to $n$, and $y_1, \dots  y_{(r-1)\ell_{r,1}}, k_1, \dots, k_{(r-1)\ell_{r,2}} \in Y_R \cup \{1\}$. It suffices to show 
		\begin{equation}\label{toShow}
			\text{if $y_\alpha = k_\beta$ for some $\alpha,\beta$ ($1 \leq \alpha \leq (r-1)\ell_{r,1}$ and $1 \leq \beta \leq (r-1)\ell_{r,2}$) then $y_\alpha = k_\beta = 1$.} 
		\end{equation}
		
		Suppose for the sake of a contradiction that $y_\alpha = k_\beta \neq 1$ for some $\alpha, \beta$. By the construction of $y_\alpha$ (resp. $k_\beta$) from Proposition \ref{mainProp} (a), we can write $y_\alpha = r_\alpha \gamma_{\alpha'} \rho(r_\alpha\gamma_{\alpha'})^{-1} = \phi_R(r_\alpha,\gamma_{\alpha'})$ and $k_\beta = r_\beta \gamma_{\beta'} \rho(r_\beta \gamma_{\beta'})^{-1} = \phi_R(r_\beta,\gamma_{\beta'})$ where $\alpha' \in \{1, \dots, r-1\}$ (resp. $\beta' \in \{1, \dots, r-1\}$) is congruent to $\alpha \mod r-1$ (resp. $\beta \mod r-1$). Since $y_\alpha = k_\beta$, Lemma \ref{stupidBijection} implies that $r_\alpha = r_\beta$ and $\gamma_{\alpha'} = \gamma_{\beta'}$.
		
		Considering indices $\mod (r-1)\ell_{r,1}$ and $\mod (r-1) \ell_{r,2}$ respectively, it follows that $y_{\alpha + i} = k_{\beta + i}$ for all $i \geq 0$. Since $\ell_{r,1} \leq \ell_{r,2}$ and each element in the decompositions of $\delta_n (\gamma_r^{-1})^{\ell_{r,1}}\delta_n^{-1}$ and $\delta_m (\gamma_r^{-1})^{\ell_{r,2}}\delta_m^{-1}$ appears at most once (by Proposition \ref{mainProp} (b)), it follows that $\ell_{r,1} = \ell_{r,2}$ and that $k_1k_2 \dots k_{(r-1)\ell_{r,2}}$ is a strong $Y_R$-rotation of $y_1y_2 \dots y_{(r-1)\ell_{r,1}}$. Since $\gamma_{\alpha'} = \gamma_{\beta'}$, the length of the strong $Y_R$-rotation is a multiple of $r-1$. By Proposition \ref{mainProp} (d), $m$ and $n$ must belong to the same cycle. This is a contradiction, since by assumption $n$ is in $e_{r,1}$ and $m$ is in $e_{r,2}$. This shows \eqref{toShow} and hence proves (a). Part (b) follows from Corollary \ref{tablePermutation} and part (a). 
	\end{proof}
	
	\begin{theorem}\label{partitionEquality}
		For each $j = 1, \dots, k_r$, choose $\delta_j \in R$ such that $\tau(\delta_j)$ maps $1$ to some element of $e_{r,j}$. Then
		$$\prod_{j = 1}^{k_r} \delta_j (\gamma_r^{-1})^{\ell_{r,j}} \delta_j^{-1}  = \prod_{y \in \bar{Y}_R} y = \prod_{y \in Y_R} y$$
		for some ordering of the elements of $\bar{Y}_R$ and $Y_R$. 
	\end{theorem}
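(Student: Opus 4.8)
The plan is to expand each factor $\delta_j(\gamma_r^{-1})^{\ell_{r,j}}\delta_j^{-1}$ by means of Proposition \ref{mainProp}(a), concatenate these expansions over $j$, identify the resulting ordered product of Schreier generators with a product running over the multiset $\bar{Y}_R$, and then pass to $Y_R$ by discarding identity factors.

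First I would fix the chosen elements $\delta_1,\dots,\delta_{k_r}$ and, for each $j$, set $t_j=\tau(\delta_j)(1)\in e_{r,j}$ and apply Proposition \ref{mainProp}(a) with $t=t_j$ and $\delta_{t}=\delta_j$ (the hypotheses hold since $\tau(\delta_j)$ maps $1$ to $t_j\in e_{r,j}$). This gives $\delta_j(\gamma_r^{-1})^{\ell_{r,j}}\delta_j^{-1}=y_{j,1}y_{j,2}\cdots y_{j,(r-1)\ell_{r,j}}$ with each $y_{j,m}\in Y_R\cup\{1\}$, and by Definition \ref{YjNotation} together with Proposition \ref{mainProp}(c) the multiset $[\,y_{j,1},\dots,y_{j,(r-1)\ell_{r,j}}\,]$ equals $\bar{Y}_j$, independently of the choice of $\delta_j$. (Concretely, these are the entries of the $\ell_{r,j}$ rows of Table 2 that constitute $\bar{Y}_j$, read left to right and top to bottom.) Taking the product over $j$ then yields
$$\prod_{j=1}^{k_r}\delta_j(\gamma_r^{-1})^{\ell_{r,j}}\delta_j^{-1}=\prod_{j=1}^{k_r}\ \prod_{m=1}^{(r-1)\ell_{r,j}}y_{j,m},$$
and reading the right-hand side from left to right exhibits it as an ordered product of the elements of the multiset $\bar{Y}_1\cup\cdots\cup\bar{Y}_{k_r}$, each with its multiplicity. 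By Corollary \ref{tablePermutation} this multiset is $\bar{Y}_R$, so $\prod_{j=1}^{k_r}\delta_j(\gamma_r^{-1})^{\ell_{r,j}}\delta_j^{-1}=\prod_{y\in\bar{Y}_R}y$ for the ordering of $\bar{Y}_R$ just obtained.

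To finish, I would invoke the count in the proof of Lemma \ref{stupidBijection}: the set $\phi_R^{-1}\{1\}$ has exactly $n-1$ elements and $\phi_R$ restricts to a bijection from $(R\times S)\setminus\phi_R^{-1}\{1\}$ onto $Y_R$, so as a multiset $\bar{Y}_R$ is $Y_R$ (each element with multiplicity one) together with $n-1$ copies of the identity. Deleting those identity factors from the ordered product above does not change it and produces an ordering of $Y_R$ with $\prod_{y\in\bar{Y}_R}y=\prod_{y\in Y_R}y$, which completes the argument. I do not expect a genuine obstacle here; the only point needing care is that Proposition \ref{mainProp}(a) is applied to the given $\delta_j$ rather than only to the particular representatives used to build Table 2, which is legitimate precisely because the multiset $\bar{Y}_j$ is independent of that choice.
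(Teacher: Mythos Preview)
Your argument is correct and follows essentially the same route as the paper: expand each factor via Proposition \ref{mainProp}(a), use Proposition \ref{mainProp}(c) to make the multiset $\bar Y_j$ independent of the chosen $\delta_j$, concatenate, and identify the result with $\bar Y_R$ (then $Y_R$). The only cosmetic difference is that the paper cites Proposition \ref{partitionResult} (the set partition $Y_R=Y_1\sqcup\cdots\sqcup Y_{k_r}$) at the concatenation step, whereas you cite Corollary \ref{tablePermutation} and Lemma \ref{stupidBijection} directly; since the former is proved from the latter, the two are equivalent.
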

	\begin{proof}
		
		By Proposition \ref{mainProp} (a) together with Definition \ref{YjNotation}, $\delta_j (\gamma_r^{-1})^{\ell_{r,j}} \delta_j^{-1} =\prod_{y \in \bar{Y}_j} y$ for some ordering of the elements in $\bar{Y}_j$. (By Proposition \ref{mainProp} (c), the choice of $\delta_j$ does not matter.) Since by Proposition \ref{partitionResult} (c) the subsets $Y_1, \dots, Y_{k_r}$ partition $Y_R$, we obtain 
		$$\prod_{j = 1}^{k_r} \delta_j (\gamma_r^{-1})^{\ell_{r,j}} \delta_j^{-1} = \prod_{y \in \bar{Y}_1} y \prod_{y \in \bar{Y}_2} y \dots \prod_{y \in \bar{Y}_{k_r}} y = \prod_{y \in \bar{Y}_R} y = \prod_{y \in Y_R} y$$
		for some ordering of the elements of $\bar{Y}_R$ and of $Y_R$.
		
	\end{proof}
	
	\begin{nothing}\label{Eij}
		For each $(i,j)$ such that $1 \leq i \leq r$ and $1 \leq j \leq k_i$, choose some $t$ in the cycle $e_{i,j}$. (Note that $t$ depends on $i$ and $j$.) Then choose $\delta_{i,j} \in R$ such that $\tau(\delta_{i,j}) = t$. For each $(i,j)$ such that $1 \leq i \leq r-1$ and $1 \leq j \leq k_i$, let $E_{i,j} = \delta_{i,j}\gamma_i^{\ell_{i,j}} \delta_{i,j}^{-1} \in \pi_1(\Xop, z_1)$ be as in \ref{SchreierBasis}. For $i = r$, $j = 1, \dots, k_r$, let $W_j = \delta_{r,j}\gamma_r^{\ell_{r,j}}\delta_{r,j}^{-1} = \delta_{r,j}(\gamma_{r-1}^{-1}\gamma_{r-2}^{-1}\dots\gamma_1^{-1})^{\ell_{r,j}}\delta_{r,j}^{-1}$ for all $j = 1, \dots, k_r$. By Theorem \ref{partitionEquality}, we have \begin{equation}\label{elementsAndInverses}
			\prod_{j=1}^{k_r} W_j = \prod_{y \in Y_R} y^{-1} \text{ for some ordering of the elements of $Y_R$.} 
		\end{equation}
	\end{nothing}  
	
	\begin{corollary}\label{step2MainResult}
		The subgroup $\Neul \lhd \pi_1(\Xop,z_1)$ equals the normal subgroup of $\pi_1(\Xop,z_1)$ generated by the words $E_{i,j}$ and $W_1, W_2, \dots, W_{k_r}$ from \ref{Eij}. Moreover, for each element $y_i$ in the basis $Y_R$ of $\pi_1(\Xop,z_1)$, $y_i$  appears in exactly one of the $E_{i,j}$ and $y_i^{-1}$ appears in exactly one of the $W_j$. 
	\end{corollary}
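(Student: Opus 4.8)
The plan is to treat the two assertions of the corollary separately. For the first --- that $\Neul$ is the normal closure in $\pi_1(\Xop,z_1)$ of the family $\{E_{i,j}\}_{1\le i\le r-1,\,1\le j\le k_i}\cup\{W_1,\dots,W_{k_r}\}$ --- I would feed the choices made in \ref{Eij} into Proposition~\ref{newPresentation}. That proposition presents $\Neul$ as the normal closure of the elements $\beta_{i,j}\,\widetilde{\gamma_i^{\ell_{i,j}}}^{\,k}\,\beta_{i,j}^{-1}$ ($1\le i\le r$, $1\le j\le k_i$), where for each cycle $e_{i,j}$ one may take $k\in e_{i,j}$ arbitrary and $\beta_{i,j}$ any path from $z_1$ to $z_k$. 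I would choose $k=t$, the element of $e_{i,j}$ fixed in \ref{Eij}, and $\beta_{i,j}$ the lift of the loop $\delta_{i,j}$ starting at $z_1$; since $\tau(\delta_{i,j})$ sends $1$ to $t$, that lift ends at $z_t$, so the choice is admissible. Under $f_*$ --- the injection $\pi_1(\Xop,z_1)\hookrightarrow\pi_1(Z,z)$ used throughout to identify $\pi_1(\Xop,z_1)$ with the subgroup $H$ --- the element $\beta_{i,j}\,\widetilde{\gamma_i^{\ell_{i,j}}}^{\,t}\,\beta_{i,j}^{-1}$ maps to $\delta_{i,j}\gamma_i^{\ell_{i,j}}\delta_{i,j}^{-1}$, which is $E_{i,j}$ for $i\le r-1$ and $W_j$ for $i=r$. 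Since $f_*$ is an isomorphism onto $H$, it carries $\Neul$ to the normal closure of this family; this is the first assertion.

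For the ``moreover'', the organizing fact is Lemma~\ref{stupidBijection}: the map $(x,s)\mapsto\phi_R(x,s)$ restricts to a bijection from $(R\times S)\setminus\phi_R^{-1}\{1\}$ onto $Y_R$. Fix $y\in Y_R$ and write $y=\phi_R(x,\gamma_i)$ for the unique such pair; then $\gamma_i\in S$, so $i\le r-1$, and as $R=\bigsqcup_{j=1}^{k_i}R_{i,j}$ there is a unique $j$ with $x\in R_{i,j}$. Proposition~\ref{productrinR} exhibits $E_{i,j}$ as an ordered product of the elements $\phi_R(x',\gamma_i)$, $x'\in R_{i,j}$. Deleting the trivial factors leaves a word whose every letter lies in $Y_R$ (there are no inverses), hence a reduced word, since a free basis is disjoint from the set of its inverses; and its letters are pairwise distinct, as $\phi_R$ is injective on $R_{i,j}\times\{\gamma_i\}$. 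So $y$ occurs in $E_{i,j}$ exactly once, and in no other $E_{\alpha,\beta}$: an occurrence would present $y$ as $\phi_R(x',\gamma_\alpha)$ with $x'\in R_{\alpha,\beta}$, whence injectivity of $\phi_R$ forces $\gamma_\alpha=\gamma_i$, so $\alpha=i$, and then $x'=x$, so $\beta=j$.

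The claim about $y^{-1}$ is parallel, with Propositions~\ref{mainProp} and~\ref{partitionResult} in place of Proposition~\ref{productrinR} and the partition $R=\bigsqcup_jR_{i,j}$. For each $j$, Proposition~\ref{mainProp}(a) writes $\delta_{r,j}(\gamma_r^{-1})^{\ell_{r,j}}\delta_{r,j}^{-1}=y_{j,1}\cdots y_{j,(r-1)\ell_{r,j}}$ with each factor in $Y_R\cup\{1\}$ and, by Proposition~\ref{mainProp}(b), no nontrivial factor repeated; since $W_j$ is the inverse of this element, its reduced form is the reversed string of the $y_{j,m}^{-1}$ after deleting trivial factors --- a reduced word all of whose letters lie in $Y_R^{-1}$, by the same disjointness remark. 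By Proposition~\ref{partitionResult}(b) the sets $Y_j=\bar Y_j\setminus\{1\}$ partition $Y_R$, so each $y\in Y_R$ equals $y_{j,m}$ for exactly one $j$, and for exactly one $m$ within that $j$; hence $y^{-1}$ occurs in exactly one $W_j$, exactly once. The one step that is not mere bookkeeping against earlier results is the repeated verification that, once trivial factors are removed, these products are already reduced words in the basis $Y_R$; this rests entirely on $Y_R\cap Y_R^{-1}=\emptyset$, valid because $Y_R$ is a free basis, and I expect it to be the only point that genuinely needs spelling out.
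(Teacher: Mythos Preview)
Your proposal is correct and follows essentially the same route as the paper: the first assertion is deduced from Proposition~\ref{newPresentation} with $\beta_{i,j}$ taken to be (the lift of) $\delta_{i,j}$, and the ``moreover'' is obtained from the same circle of results the paper packages as Corollary~\ref{secondDecomposition} and equation~\eqref{elementsAndInverses}. You simply unpack these a level further, citing Proposition~\ref{productrinR}, Lemma~\ref{stupidBijection}, and Propositions~\ref{mainProp} and~\ref{partitionResult} directly, and you add the explicit observation that the resulting words are already $Y_R$-reduced because $Y_R\cap Y_R^{-1}=\emptyset$; the paper leaves this implicit.
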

	\begin{proof}
		The first claim follows from Proposition \ref{newPresentation} by setting $\beta_{i,j} = \delta_{i,j}$.  The claim that each $y_i$ appears in exactly one of the $E_{i,j}$ follows from Corollary \ref{secondDecomposition} and the fact that each $y_i^{-1}$ appears in exactly one of the $W_j$ follows from \eqref{elementsAndInverses}. 
	\end{proof}
	
	\begin{remark} Let $I = \{1, \dots, r-1\}$, let $j \in J_i = \{1, \dots, k_i\}$ and let $K = \{1, \dots, k_r\}$. Corollary \ref{step2MainResult} implies that the set $\{E_{i,j}\}_{i \in I, j \in J_i} \cup \{W_k\}_{k \in K}$ of generators of $\Neul$ is a $Y_R$-fundamental set of words in the free group $\pi_1(\Xop, z_1)$.
	\end{remark}
	
	\noindent This completes Step 2.
	
	\subsection{Step 3}
	Corollary \ref{step2MainResult} shows that the group $\pi_1(X,z_1) \isom \pi_1(\Xop, z_1) / \Neul$  satisfies the assumptions of Corollary \ref{fundamentalCorollary}. Consequently,   $\pi_1(X,z_1) = G \isom H_1 \star \dots \star H_s \star F_r$ where $s, r \in \Nat$. Since $X$ is a compact Riemann surface, $s = 1$ and $r = 0$. The proof of Proposition \ref{fundamentalProposition2} shows how to produce a sequence of isomorphisms
	$$ \pi_1(\Xop,z_1) / \Neul = G_0 \to \dots \to G_m = \lb Y' \mid w \rb$$
	such that $Y'$ is a subset of $Y_R$ and $w$ is $Y'$-fundamental. This completes Step 3. 
	
	\subsection{Step 4}
	
	Given our group presentation $\pi_1(X,z_1) = G \isom \lb Y' \mid w \rb$ obtained in Step 3, the proof of Proposition \ref{fundamentalProposition} shows how to obtain a sequence of basis changes so that we can express $G_m$ as 
	$$ G_m = \lb a_1, b_1, \dots, a_g, b_g \mid \prod_{i=1}^g [a_i, b_i] \rb.$$
	This completes Step 4. 
	
	\subsection{Step 5} The elements of $Y_R$ can be expressed as words in $F(\gamma_1, \dots, \gamma_{r-1})$. The elements of $Y'$ obtained in Step 3 are obtained via explicit group isomorphisms and are images of various $y_i \in Y_R$. The expression for the unique relation obtained in Step 4 is obtained via a sequence of explicit changes of bases. All of the substitutions that occur in these steps can be tracked and reversed.

	\section{Examples}\label{examples}
	
	\subsection{A Simple Example}
	
	\begin{example}\label{nonCyclic}
		We will consider the degree 4 covering $f: X \to \CPone$  with four branch points $B = \{x_1, x_2, x_3, x_4\}$ whose permutation representation $\tau$ is determined by
		$$\tau (\gamma_1) = (123), \tau (\gamma_2) = (234), \tau (\gamma_3) = (234), \tau (\gamma_4) = (134)$$
		where $\gamma_1, \gamma_2, \gamma_3, \gamma_4$ are the generators of $G = \pi_1(Z,z)$. One can check that since $\tau(\gamma_1 \gamma_2 \gamma_3 \gamma_4) = 1$ and the image of $\tau$ is a transitive subgroup of $S_n$, such a covering exists and is path connected. Our goal is to compute $\pi_1(X,z_1)$ by using its description as a branched cover of $\CPone$.   
		
		\bigskip
		\noindent\textbf{Step 1.} The subgroup $H = \pi_1(\Xop, z_1) = \tau^{-1} (\Stab(1))$ and its various right cosets in $G$ can be computed explicitly. Indeed we find,
		\begin{align*}
			H = H_1  &= \tau^{-1} \{1, (23), (24), (34), (234), (243)\}\\
			H_2  &= \tau^{-1} \{(12), (123), (124), (12)(34), (1234), (1243)\} \\
			H_3  &= \tau^{-1} \{(13), (132), (134), (13)(24), (1324), (1342)\} \\
			H_4 &= \tau^{-1} \{(14), (142), (143), (14)(23), (1423), (1432)\}.
		\end{align*}
		For $j \in \{1,2,3,4\}$, $H_j$ consists of those elements of $\pi_1(Z,z)$ whose lift starting at $z_1 \in \Xop$ is a path to $z_j \in \Xop$. The Schreier transversal $R$ for the right cosets of $G/H$ is 
		$$R = \{1, \gamma_1, \gamma_1^{-1},  \gamma_1 \gamma_2^{-1}\}$$
		from which we can compute the basis $Y_R = \{r \gamma_i \rho(r \gamma_i)^{-1} \ | \ i=1,2,3, \ r \in R\} \setminus \{0\}$. 
		The set $Y_R$ consists of the non-trivial elements in the following table:
		
		\begin{center}
			\begin{tabular}{ |c|c|c| } 
				\hline
				$1 \gamma_1 \rho(1 \gamma_1)^{-1}$ & $1 \gamma_2 \rho(1 \gamma_2)^{-1}$ & $1 \gamma_3 \rho(1 \gamma_3)^{-1}$ \\ \hline
				$\gamma_1 \gamma_1 \rho(\gamma_1 \gamma_1)^{-1}$ & $\gamma_1 \gamma_2 \rho(\gamma_1 \gamma_2)^{-1}$ & $\gamma_1 \gamma_3 \rho(\gamma_1 \gamma_3)^{-1}$ \\ \hline
				$\gamma_1^{-1} \gamma_1 \rho(\gamma_1^{-1} \gamma_1)^{-1}$ & $\gamma_1^{-1} \gamma_2 \rho(\gamma_1^{-1} \gamma_2)^{-1}$ & $\gamma_1^{-1} \gamma_3 \rho(\gamma_1^{-1} \gamma_3)^{-1}$ \\ \hline
				$\gamma_1\gamma_2^{-1} \gamma_1 \rho(\gamma_1\gamma_2^{-1} \gamma_1)^{-1}$ & $\gamma_1\gamma_2^{-1} \gamma_2 \rho(\gamma_1\gamma_2^{-1} \gamma_2)^{-1}$ & $\gamma_1\gamma_2^{-1} \gamma_3 \rho(\gamma_1\gamma_2^{-1} \gamma_3)^{-1}$ \\
				\hline
			\end{tabular}
		\end{center}
		
		Simplifying the expressions in the table, we obtain:
		
		\begin{center}
			\begin{tabular}{ |c|c|c| } 
				\hline
				$1$ & $\gamma_2 $ & $\gamma_3 $ \\ \hline
				$\gamma_1^3$ & $\gamma_1 \gamma_2 \gamma_1 $ & $\gamma_1 \gamma_3 \gamma_1$ \\ \hline
				$1$ & $\gamma_1^{-1} \gamma_2^2 \gamma_1^{-1}$ & $\gamma_1^{-1} \gamma_3  \gamma_2 \gamma_1^{-1}$ \\ \hline
				$\gamma_1 \gamma_2^{-1} \gamma_1  \gamma_2 \gamma_1^{-1}$ & $1$ & $\gamma_1 \gamma_2^{-1} \gamma_3 \gamma_1^{-1}$ \\
				\hline
			\end{tabular}
		\end{center}
		
		and find that $Y_R$ consists of $9$ elements, as expected from the discussion in \ref{schreierConstruction}. We define  
		\begin{align*}
			y_1 &= \gamma_2 & y_4 &= \gamma_1\gamma_2\gamma_1 & y_7 &= \gamma_1^{-1} \gamma_3 \gamma_2 \gamma_1^{-1}  \\
			y_2 &= \gamma_3 & y_5 &= \gamma_1\gamma_3\gamma_1 & y_8 &= \gamma_1 \gamma_2^{-1} \gamma_1 \gamma_2 \gamma_1^{-1}  \\
			y_3 &= \gamma_1^3 & y_6 &= \gamma_1^{-1} \gamma_2^2 \gamma_1^{-1} & y_9 &=\gamma_1 \gamma_2^{-1} \gamma_3 \gamma_1^{-1}.
		\end{align*}
		This completes Step 1. 
		
		\medskip
		\noindent\textbf{Steps 2 and 3.} Next, we want to compute $\pi_1(X,z_1) = \pi_1 (\Xop,z_1)/ \Neul$ where $\Neul$ is described as in Proposition \ref{newPresentation}. Since $|f^{-1}(B)| = 8$, the subgroup $\Neul \lhd \pi_1(\Xop,z_1)$ can be generated by 8 elements. These 8 elements are as follows:
		\begin{itemize}
			\item $\gamma_1^3, \gamma_1\gamma_2^{-1}\gamma_1 \gamma_2\gamma_1^{-1}$ (loops from $z_1$ around each point in $f^{-1}(x_1)$)
			\item $\gamma_2, \gamma_1\gamma_2^3\gamma_1^{-1}$ (loops from $z_1$ around each point in $f^{-1}(x_2)$)
			\item $\gamma_3, \gamma_1 \gamma_3^3 \gamma_1^{-1}$ (loops from $z_1$ around each point in $f^{-1}(x_3)$)
			\item $\gamma_4^3, \gamma_1\gamma_4\gamma_1^{-1}$ (loops from $z_1$ around each point in $f^{-1}(x_4)$).
		\end{itemize}
		To express the 8 elements above as products of generators of $\pi_1(\Xop,z_1)$, we use the Schreier rewriting process described in \ref{SchreierRewriting}. We find
		\begin{align*}
			&\gamma_1^3 = y_3 \\
			&\gamma_1\gamma_2^{-1}\gamma_1 \gamma_2\gamma_1^{-1} = y_8 \\
			&\gamma_2 = y_1\\
			&\gamma_3 = y_2 \\
			&\gamma_1 \gamma_2^3 \gamma_1^{-1} = \underbrace{(1\gamma_1 \gamma_1^{-1})}_{1}(\gamma_1 \gamma_2 \gamma_1)(\gamma_1^{-1} \gamma_2^2 \gamma_1^{-1})\underbrace{(\gamma_1\gamma_2^{-1}\gamma_2 \gamma_1^{-1})}_{1} \underbrace{(\gamma_1 \gamma_1^{-1}1)}_{1} = y_4 y_6 \\
			& \gamma_1 \gamma_3^3 \gamma_1^{-1} = \underbrace{(1\gamma_1 \gamma_1^{-1})}_{1}(\gamma_1 \gamma_3 \gamma_1)(\gamma_1^{-1} \gamma_3 \gamma_2 \gamma_1^{-1})(\gamma_1\gamma_2^{-1}\gamma_3 \gamma_1^{-1}) \underbrace{(\gamma_1 \gamma_1^{-1}1)}_{1} = y_5y_7y_9 \\
			&\gamma_1\gamma_4 \gamma_1^{-1} = \gamma_1 \gamma_3^{-1} \gamma_2^{-1} \gamma_1^{-1} \gamma_1^{-1} = \underbrace{(1\gamma_1 \gamma_1^{-1})}_{1}(\gamma_1\gamma_3^{-1}\gamma_2\gamma_1^{-1})\underbrace{(\gamma_1 \gamma_2^{-1} \gamma_2^{-1}\gamma_1)}_{1}(\gamma_1^{-1}\gamma_1^{-1}\gamma_1^{-1})\underbrace{(\gamma_1 \gamma_1^{-1} 1)}_{1} = y_9^{-1}y_6^{-1}y_3^{-1} \\
			&\gamma_4^3 = \gamma_3^{-1}\gamma_2^{-1}\gamma_1^{-1}\gamma_3^{-1}\gamma_2^{-1}\gamma_1^{-1} \gamma_3^{-1}\gamma_2^{-1}\gamma_1^{-1} = y_2^{-1}y_1^{-1}y_5^{-1}y_8^{-1}y_7^{-1}y_4^{-1}
		\end{align*}
		Using the decompositions above together with Proposition \ref{newPresentation} and simplifying as in Proposition \ref{fundamentalProposition2}, we find 
		\begin{align*}
			\pi_1(X,z_1) &\isom \big\lb y_1, y_2,y_3,y_4,y_5,y_6,y_7,y_8,y_9 | y_1,y_2,y_3,y_8,y_4y_6,y_5y_7y_9,y_9^{-1}y_6^{-1}y_3^{-1},y_2^{-1}y_1^{-1}y_5^{-1}y_8^{-1}y_7^{-1}y_4^{-1} \big\rb \\
			&\isom \big\lb y_4,y_5,y_6,y_7,y_9 | y_4y_6,y_5y_7y_9,y_9^{-1}y_6^{-1},y_5^{-1}y_7^{-1}y_4^{-1} \big\rb \\
			&\isom \big\lb y_4,y_5,y_7,y_9 |y_5y_7y_9,y_9^{-1}y_4,y_5^{-1}y_7^{-1}y_4^{-1} \big\rb \\
			&\isom \big\lb y_5,y_7,y_9 |y_5y_7y_9,y_5^{-1}y_7^{-1}y_9^{-1} \big\rb \\
			&\isom \big\lb y_7,y_9 |y_7^{-1}y_9^{-1}y_7y_9 \big\rb.\\
		\end{align*}
		\textbf{Steps 4 and 5.}
		Since the group presentation $\pi_1(X,z_1) = \big\lb y_7,y_9 |y_7^{-1}y_9^{-1}y_7y_9 \big\rb$ has the required form (i.e. $y_7^{-1}y_9^{-1}y_7y_9 = [y_7,y_9]$ is a product of commutators), nothing is required to complete Step 4. Substituting for $y_7$ and $y_9$ we can express $\pi_1(X,z_1)$ as the quotient of the subgroup  $\pi_1(\Xop,z_1) \isom \lb \gamma_1^{-1} \gamma_3 \gamma_2 \gamma_1^{-1}, \gamma_1 \gamma_2^{-1} \gamma_3 \gamma_1^{-1} \rb \leq \pi_1(Z,z)$ by the normal subgroup generated by $(\gamma_1^{-1} \gamma_3 \gamma_2 \gamma_1^{-1})^{-1} (\gamma_1 \gamma_2^{-1} \gamma_3 \gamma_1^{-1})^{-1}  (\gamma_1^{-1} \gamma_3 \gamma_2 \gamma_1^{-1})(\gamma_1 \gamma_2^{-1} \gamma_3 \gamma_1^{-1})$. This completes Step 5. 
	\end{example}
	
	\subsection{Hyperelliptic Curves}\label{hyperelliptic}
	
	We perform the computations above explicitly for hyperelliptic curves over $\Comp$. Without loss of generality, assuming that our curve is unramified at infinity, every hyperelliptic curve $X$ over $\Comp$ can be described by an equation of form 
	$$y^2 = \prod_{i = 1}^r (x-x_i)$$ 
	where the $x_i \in \Comp$ are distinct, and $r$ is even. Let $B = \{x_1, \dots, x_r\}$ and let $Z = \CPone \setminus B$. The permutation representation $\tau: \pi_1(Z,z) \to S_2$ is determined by $\tau(\gamma_i) = \sigma_i = (12)$ for each $i = 1, \dots, r$. 
	
	\bigskip
	\noindent\textbf{Step 1.} We have $H = \pi_1(\Xop,z_1) = \tau^{-1}\{1\}$ and the Schreier transversal is $R = \{1, \gamma_1\}$. We obtain that 
	$$Y_R = \left(\setspec{\gamma_l \rho(\gamma_l)^{-1}}{l = 1, \dots, r-1 } \cup \setspec{\gamma_1 \gamma_l \rho(\gamma_1\gamma_l)^{-1} }{l = 1, \dots, r-1}\right) \setminus \{1\}.$$ 
	For all $l = 1, \dots, r-1$, $\rho(\gamma_l)^{-1} = \gamma_1^{-1}$ and $\rho(\gamma_1\gamma_l) = 1 \in R$, and so 
	\begin{align}
		Y_R &= \left(\setspec{\gamma_l \gamma_1^{-1}}{l = 1, \dots, r-1} \cup \setspec{\gamma_1 \gamma_l}{l = 1, \dots, r-1}\right) \setminus \{1\} \\
		&= \setspec{\gamma_l \gamma_1^{-1}}{l = 2, \dots, r-1} \cup \setspec{\gamma_1 \gamma_l}{l = 1, \dots, r-1}
	\end{align}
	Observe that $Y_R$ consists of $1+2(r-2)$ elements which we label as $h_{2,1},h_{3,1}, \dots, h_{r-1,1}, h_{11}, h_{12}, \dots, h_{1,r-1}$ where $h_{l,1} = \gamma_l \gamma_1^{-1}$ and $h_{1,l} = \gamma_1\gamma_l$. This completes Step 1. 
	
	\bigskip
	\noindent\textbf{Step 2.} The generators of normal subgroup $\Neul \lhd \pi_1(\Xop,z_1)$ are $\gamma_1^2, \gamma_2^2, \dots, \gamma_r^2$. Expressing these $r$ elements above as products of generators of $\pi_1(\Xop,z_1)$, we find
	
	\begin{align*}
		&\gamma_1^2 = h_{1,1} \\
		&\gamma_l^2 = h_{l,1}h_{1,l} \text{ for all $l = 2, \dots, r-1$;} \\
		&\gamma_r^2 = \gamma_{r-1}^{-1} \gamma_{r-2}^{-1} \dots \gamma_{1}^{-1} \gamma_{r-1}^{-1} \gamma_{r-2}^{-1} \dots \gamma_{1}^{-1} = \left(\prod_{i = 1}^{\frac{r-2}{2}} h_{1,r-2i+1}^{-1} h_{r-2i,1}^{-1} \right) h_{1,1}^{-1} \left(\prod_{i = 1}^{\frac{r-2}{2}} h_{r-2i+1,1}^{-1} h_{1,r-2i}^{-1} \right) \\
	\end{align*}
	observing as well that  $\gamma_1^2, \gamma_2^2, \dots, \gamma_r^2$ form a $Y_R$-fundamental set. This completes Step 2. 
	
	\bigskip
	\noindent \textbf{Step 3.} We have $\pi_1(X,z_1) \isom \pi_1(\Xop, z_1) / \Neul$. In $\pi_1(X,c_1)$, we have that $h_{1,1} = 1$ and that $h_{l,1}^{-1} = h_{1,l}$ for all $l = 2,\dots, r-1$, so
	$$\pi_1(X,z_1) \isom \left\langle \underbrace{h_{1,2}, h_{1,3}, \dots, h_{1,r-1}}_S \big{|}  \left(\prod_{i = 1}^{\frac{r-2}{2}} h_{1,r-2i+1}^{-1} h_{1,r-2i} \right) \left(\prod_{i = 1}^{\frac{r-2}{2}} h_{1,r-2i+1} h_{1,r-2i}^{-1} \right) \right\rangle.$$
	Observe that the defining relation in the above presentation of $\pi_1(X,z_1)$ is $S$-fundamental where $S = \{h_{1,2}, h_{1,3}, \dots, h_{1,r-1}\}$. This completes Step 3. 
	
	\bigskip
	\noindent\textbf{Step 4.} Let $S = \{h_{1,2}, \dots, h_{1,r-1}\}$ be the basis of $\pi_1(X,z_1)$ from Step 3. For each $j = 1, \dots, \frac{r-2}{2}$, let 
	$$W_j = \left(\prod_{i = j}^{\frac{r-2}{2}} h_{1,r-2i+1}^{-1} h_{1,r-2i} \right) \left(\prod_{i = j}^{\frac{r-2}{2}} h_{1,r-2i+1} h_{1,r-2i}^{-1} \right) \text{ and let $W_{\frac{r}{2}} = \{1\}$.}$$ 
	Then $\pi_1(X,z_1) = \lb S \mid W_1 \rb$. We want to replace $S$ by a new basis $S'$ so that $\pi_1(X,z_1) = \lb S' \mid W' \rb$ where $W' = \prod_{i = 1}^{\frac{r-2}{2}} [a_i,b_i]$ and $a_i, b_i \in S'$.
	
	\begin{proposition}\label{hyperellipticProposition}
		For each $k = 1, \dots, \frac{r-2}{2}$, let $a_k = \left(\prod_{i = k+1}^{\frac{r-2}{2}} h_{1,r-2i+1}^{-1} h_{1,r-2i}\right) h_{1,r-2k-1}^{-1} $ and $b_k = h_{1,r-2k}\left(\prod_{i = k+1}^{\frac{r-2}{2}} h_{1,r-2i+1}^{-1} h_{1,r-2i}\right)^{-1}$. Then, for all $k = 1, \dots,\frac{r-2}{2},\frac{r}{2}$,   
		$$\pi_1(X,z_1) \isom \lb S_k \mid W_k \prod_{i = 1}^{k-1}[a_i, b_i] \rb$$
		where $S_k = \{a_1, b_1, \dots, a_{k-1}, b_{k-1}\} \cup \{h_{1,2}, h_{1,3}, \dots , h_{r-2k}, h_{1,r+1-2k} \}$.         
	\end{proposition}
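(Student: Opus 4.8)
The plan is to prove the proposition by induction on $k$, running from $k=1$ up to $k=\frac r2$; each step peels off one further commutator from the relation by a single change of basis. The base case $k=1$ is immediate: $S_1=S$, the product $\prod_{i=1}^{0}[a_i,b_i]$ is empty, and $\lb S\mid W_1\rb$ is exactly the presentation of $\pi_1(X,z_1)$ obtained at the end of Step 3.

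For the inductive step, suppose $1\le k<\frac r2$ and $\pi_1(X,z_1)\isom\lb S_k\mid W_k\prod_{i=1}^{k-1}[a_i,b_i]\rb$. Writing $P_k=\prod_{i=k+1}^{\frac{r-2}{2}}h_{1,r-2i+1}^{-1}h_{1,r-2i}$ and $P_k'=\prod_{i=k+1}^{\frac{r-2}{2}}h_{1,r-2i+1}h_{1,r-2i}^{-1}$, one reads off from the definition of $W_k$ that
\[
W_k=h_{1,r-2k+1}^{-1}\,h_{1,r-2k}\,P_k\,h_{1,r-2k+1}\,h_{1,r-2k}^{-1}\,P_k',\qquad W_{k+1}=P_kP_k',
\]
and that $P_k$ and $P_k'$ are words in the elements of $S_k$ other than $h_{1,r-2k}$ and $h_{1,r-2k+1}$. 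Two elementary verifications then close the step. First, $S_{k+1}$ is again a basis of the free group $F(S_k)$: since $a_k$ and $b_k$ each have the form $u\,x^{\pm1}\,v$ with $u,v$ words in $S_k\setminus\{h_{1,r-2k},h_{1,r-2k+1}\}$ and $x\in\{h_{1,r-2k},h_{1,r-2k+1}\}$, Lemma \ref{o8ytu1y2qwoiekdmwes7s} (equivalently Corollary \ref{jhuy5E1r27iuwehbd90}) shows that replacing $h_{1,r-2k}$ and $h_{1,r-2k+1}$ in $S_k$ by $b_k$ and $a_k$ yields a basis, which is $S_{k+1}$. Second, expanding $[a_k,b_k]=a_k^{-1}b_k^{-1}a_kb_k$ and cancelling telescopically gives $W_k=[a_k,b_k]\,W_{k+1}$ in $F(S_{k+1})$. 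Granting these, $W_k\prod_{i<k}[a_i,b_i]=[a_k,b_k]\,W_{k+1}\prod_{i<k}[a_i,b_i]$ is conjugate, by $[a_k,b_k]$, to $W_{k+1}\prod_{i\le k}[a_i,b_i]$, so the two relators generate the same normal subgroup of $F(S_{k+1})$ and hence $\lb S_k\mid W_k\prod_{i<k}[a_i,b_i]\rb=\lb S_{k+1}\mid W_{k+1}\prod_{i\le k}[a_i,b_i]\rb$, completing the induction. For $1\le k<\frac{r-2}{2}$ the two verifications can alternatively be read off from Proposition \ref{firstCommutator} applied to the prefundamental pair $(S_k,\,W_k\prod_{i<k}[a_i,b_i])$ at the good triple $(2,p,p+1)$, where $p\ge5$ is the position at which $h_{1,r-2k+1}$ recurs after the block $P_k$ (Remark \ref{goodtripleRemark} confirms $(2,p,p+1)$ is a good triple); the final transition $k=\frac{r-2}{2}\to\frac r2$ is even easier, since then $W_{(r-2)/2}$ is already a single commutator.

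Iterating the step $\frac{r-2}{2}$ times, and using $W_{r/2}=1$ together with the fact that the $h$-part of $S_{r/2}$ degenerates to the empty set, yields $\pi_1(X,z_1)\isom\lb a_1,b_1,\dots,a_{(r-2)/2},b_{(r-2)/2}\mid\prod_{i=1}^{(r-2)/2}[a_i,b_i]\rb$, the classical presentation for genus $g=\frac{r-2}{2}$. The only part needing care is the bookkeeping inside the inductive step: carrying out the telescoping behind $W_k=[a_k,b_k]W_{k+1}$, checking (via Remark \ref{goodtripleRemark}) that the four relevant positions avoid the already-extracted commutator blocks, and keeping the index arithmetic consistent — in particular reconciling the displayed formulas for $a_k$, $b_k$, $S_k$ with the output of the change of basis, and correctly handling the degenerate empty products and ranges at $k=1$ and at the terminal step $k=\frac r2$.
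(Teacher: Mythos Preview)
Your proof is correct and follows essentially the same inductive strategy as the paper: both establish the $k=1$ case from Step~3, and for the inductive step both replace $h_{1,r-2k}$ and $h_{1,r-2k+1}$ in $S_k$ by $a_k,b_k$ via the change of basis coming from Proposition~\ref{firstCommutator} applied to the good triple $(2,r-2k+1,r-2k+2)$ (your $(2,p,p+1)$), obtaining $W_kP_{k-1}=[a_k,b_k]W_{k+1}P_{k-1}$ and then rotating to $W_{k+1}P_k$. Your treatment is in fact slightly more careful than the paper's at the terminal step $k=\tfrac{r-2}{2}$: there the quadruple $(1,2,3,4)$ is already $(S_k,W_kP_{k-1})$-admissible, so $(2,3,4)$ fails condition~(iii) of Definition~\ref{kjbh82ta54qvbspo} and the appeal to Proposition~\ref{firstCommutator} is not literally available; your direct telescoping argument (and the observation that $W_{(r-2)/2}$ is already a commutator) handles this cleanly.
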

	
	\begin{proof}
		We prove the result by induction. The $k = 1$ case is just a restatement of the equality in Step 3. Let $P_n = \prod_{i = 1}^n [a_i,b_i]$ and assume the result holds for $k$ so that $\pi_1(X,z_1) = \lb S_k \mid W_k P_{k-1} \rb$. Observe that $(2,r-2k+1,r-2k+2)$ is a good triple for $(S_k,  W_k P_{k-1})$. Let $R = 1$, $S = \prod_{i = k+1}^{\frac{r-2}{2}} h_{1,r-2i+1}^{-1} h_{1,r-2i}$, $T = 1$, $U = \left(\prod_{i = k+1}^{\frac{r-2}{2}} h_{1,r-2i+1} h_{1,r-2i}^{-1} \right)P_{k-1}$, $a_k = S h_{1,r-2k+1}$ and $b_k = h_{1,r-2k}^{-1}S^{-1}$. By Proposition \ref{firstCommutator}, $S_{k+1} = \{a_1, b_1, \dots, a_{k-1}, b_{k-1}, a_k, b_k\} \cup \{h_{1,2}, h_{1,3}, \dots , h_{r-2k-2}, h_{1,r+1-2k-2}\}$ and $W_kP_{k-1} = [a_k, b_k] SU = [a_k, b_k]W_{k+1}P_{k-1}$. It follows that $\pi_1(X,z_1) \isom \lb S_{k+1} \mid [a_k, b_k]W_{k+1}P_{k-1} \rb = \lb S_{k+1} \mid W_{k+1}P_k \rb$ where we use that $W_{k+1}P_k$ is an $S_{k+1}$-rotation of $[a_k, b_k]W_{k+1}P_{k-1}$. This completes the proof.     
	\end{proof}
	
	\begin{remark}
		Observe that Proposition \ref{hyperellipticProposition} is essentially the repeated implementation of the ``change of basis" algorithm described in Proposition \ref{firstCommutator}. The $k = \frac{r}{2}$ case of Proposition \ref{hyperellipticProposition}  yields the desired description of $\pi_1(X,c_1)$. This completes Step 4.   
	\end{remark}
	
	\medskip
	\noindent\textbf{Step 5.} Observe that $W_k,  a_k$ and $b_k$ are expressed in terms of images of basis elements $h_{1,l}$ where each $h_{1,l} \in Y_R$. Moreover, in Step 1, each basis element $h_{1,l}$ is defined to be $\gamma_1 \gamma_l$ for $l = 1, \dots, r-1$. Thus, one can easily reverse the substitutions to express each $a_i$ and $b_i$ ($i = 1, \dots, \frac{r-2}{2}$) in terms of the $\gamma_j \in F(\gamma_1, \dots, \gamma_{r-1})$.

	\bibliographystyle{plain}
	\bibliography{FundamentalGroupBranchedCoversFeb26}

\end{document}